\definecolor{bg}{rgb}{0.93,0.93,0.93}
\newtheorem{theorem}{Theorem}[section]
\newtheorem{proposition}[theorem]{Proposition}
\newtheorem{definition}[theorem]{Definition}
\acrodef{pde}[PDE]{partial differential equation}
\acrodef{fe}[FE]{finite element}
\acrodef{fem}[FEM]{finite element method}
\acrodef{xfem}[XFEM]{extended finite element method}
\acrodef{dof}[DOF]{degree of freedom}
\acrodef{agfe}[AgFE]{aggregated finite element}
\acrodef{agfem}[AgFEM]{aggregated finite element method}
\acrodef{cg}[CG]{continuous Galerkin}
\acrodef{dg}[DG]{discontinuous Galerkin}
\acrodef{gp}[GP]{\emph{ghost penalty}}
\acrodef{bgp}[B-GP]{Bulk ghost penalty}
\acrodef{fgp}[F-GP]{Face ghost penalty}
\acrodef{agp}[A-GP]{intra-aggregate ghost penalty}
\acrodef{wagL2}[W-Ag-$L^2$]{weak AgFEM by $L^2$ product}
\acrodef{wagH1}[W-Ag-$\boldsymbol{\nabla}$]{weak AgFEM by $H^1$ product}
\acrodef{bgpi}[B-GP-i]{B-GP stabilisation with interpolation}
\acrodef{sag}[S-Ag]{strong version of AgFEM}
\newcommand{\tnor}[1]{\| #1 \|}
\definecolor{shadecolor}{gray}{.92}
\definecolor{incolor}{rgb}{0,0,.7}
\definecolor{outcolor}{rgb}{.65,0,0}
\definecolor{syntaxcolor}{rgb}{.65,0,0}
\begin{document}

\title[Linking ghost penalty and aggregated unfitted methods]{Linking ghost penalty and aggregated unfitted methods}

\author[S. Badia]{Santiago Badia$^{1,2,*}$}

\author[E. Neiva]{Eric Neiva$^{2}$}

\author[F. Verdugo]{Francesc Verdugo$^{2}$}

\thanks{\null\\
$^{1}$ School of Mathematics, Monash University, Clayton, Victoria, 3800, Australia.\\
$^{2}$ Centre Internacional de M\`etodes Num\`erics en Enginyeria, Esteve Terrades 5, E-08860 Castelldefels, Spain.\\
$^*$ Corresponding author.\\
E-mails: {\tt santiago.badia@monash.edu} (SB) 
{\tt eneiva@cimne.upc.edu} (EN)
{\tt fverdugo@cimne.upc.edu} (FV)
}

% \date{\today}

\begin{abstract}
In this work, we analyse the links between ghost penalty stabilisation and aggregation-based discrete extension operators for the numerical approximation of elliptic partial differential equations on unfitted meshes. We explore the behavior of ghost penalty methods in the limit as the penalty parameter goes to infinity, which returns a strong version of these methods. We observe that these methods suffer locking in that limit. On the contrary, aggregated finite element spaces are locking-free because they can be expressed as an extension operator from well-posed to ill-posed degrees of freedom. Next, we propose novel ghost penalty methods that penalise the distance between the solution and its aggregation-based discrete extension. These methods are locking-free and converge to aggregated finite element methods in the infinite penalty parameter limit. We include an exhaustive set of numerical experiments in which we compare weak (ghost penalty) and strong (aggregated finite elements) schemes in terms of error quantities, condition numbers and sensitivity with respect to penalty coefficients on different geometries, intersection locations and mesh topologies.
\end{abstract}

\maketitle

\noindent{{\bf {Keywords}}: Embedded methods; unfitted finite elements; stabilisation techniques; ghost penalty; aggregated finite elements.

\section{Introduction}\label{sec:introduction}

Standard \acp{fem} requires cumbersome and time-consuming body-fitted mesh generation, which is hard to automatise and does not scale properly on distributed platforms. These methods are not suitable for problems with moving boundaries or interfaces. Conversely, unfitted \acp{fem} provide a great amount of flexibility at the geometrical discretisation step. They can embed the domain of interest in a geometrically simple background grid (usually a uniform or an adaptive Cartesian grid), which can be generated and partitioned much more efficiently. Analogously, they can easily capture embedded interfaces. As a result, unfitted \ac{fe} methods are generating interest in applications with moving interfaces,
{such as fracture
mechanics~\cite{Waisman2013,berger-vergiat_inexact_2012}, 
fluid-structure
interaction~\cite{schott2019monolithic,alauzet2016nitsche,zonca2018unfitted,Massing2015},
two-phase and free surface
flows~\cite{Sauerland2011,kirchhart2016analysis}, localised plastic deformation~\cite{Badia2020Dec}, and in applications
with varying domains, such as shape or topology
optimisation~\cite{Burman2018}, additive
manufacturing~\cite{neiva2020numerical,carraturo2020modeling}, and stochastic
geometry problems~\cite{badia2019embedded}}. In the numerical community, this family of methods is known by different names, e.g., \emph{unfitted}, \emph{embedded}, \emph{immersed}. Besides, many different schemes have been proposed;~see, e.g.~the \ac{xfem}~\cite{belytschko_arbitrary_2001}, the
cutFEM method~\cite{burman_cutfem_2015}, the \ac{agfem}~\cite{Badia2018}, the cutIGA method~\cite{Elfverson2018},
the immersed boundary method~\cite{Mittal2005}, the finite cell
method~\cite{Schillinger2015}, the shifted boundary
method~\cite{main2018shifted}, the immersogeometric
method~\cite{kamensky2015immersogeometric}, the Cartesian grid \ac{fem}~\cite{Navarro-Jimenez2020Jul} and \ac{dg} methods
with cell
aggregation~\cite{saye2017implicit,engwer2012dune,johansson2013high,muller2017high}.

In the case of unfitted boundaries, unfitted methods lead to unstable and severe ill-conditioned discrete problems~\cite{DePrenter2017,Badia2018}, unless a specific technique mitigates the problem. The intersection of a background cell with the physical domain can be arbitrarily small and with unbounded aspect ratio. It leads to the \emph{small cut cell problem}; \ac{fe} shape functions on the background (unfitted) mesh can have arbitrarily small support in the physical domain. This support depends on the intersection between the background mesh and the boundary (or interface), which in general cannot be controlled. This problem is also present on unfitted interfaces with a high contrast of physical properties~\cite{Neiva2021}.

Despite vast literature on the topic (see,  e.g.,~\cite{Kummer2017,lehrenfeld2016high,guzman2017finite,li2019shifted} and the references above), few formulations are fully robust and optimal regardless of cut location and material contrast. The ill-conditioning issue was addressed by the introduction of the so-called \ac{gp}. In the original article~\cite{burman2010ghost}, two variants of the methods were originally proposed: (1) A bulk penalty term that penalised the distance between the finite element solution in a patch and a polynomial of the order of the approximation; (2) a face penalty term that penalised normal derivatives of the function (up to the order of approximation) on faces that were in touch with cut cells. The face-based \ac{gp} has been the method of choice in the so-called CutFEM framework~\cite{burman_cutfem_2015}. These schemes were originally motivated for $\mathcal{C}^{0}$ finite element spaces on simplicial meshes and later used in combination with discontinuous Galerkin formulations.

The so-called \emph{cell aggregation} or \emph{cell agglomeration} techniques
are an alternative way to ensure robustness with respect to cut location. This approach {is very natural in
\ac{dg} methods, as they can be easily formulated on agglomerated
meshes~\cite{helzel_high-resolution_2005,Kummer2017,bastian2009unfitted}}, and the method is robust if each cell (now an aggregate of cells) has \emph{enough} support in the interior of the domain~\cite{muller2017high}. However, the application of these ideas to $\mathcal{C}^{0}$ Lagrangian finite elements is more involved. The question is how to keep $\mathcal{C}^{0}$ continuity after the agglomeration. 

This problem was addressed in~\cite{Badia2018}, and the resulting method was coined \ac{agfem}. \ac{agfem} combines an aggregation strategy with a map that assigns to every geometrical entity (e.g., vertex, edge, face) one of the aggregates containing it. With these two ingredients, \ac{agfem} constructs a \emph{discrete extension operator} from well-posed \acp{dof} (i.e., the ones related to shape functions with enough support in the domain interior) to ill-posed \acp{dof} (i.e., the ones with small support) that preserves continuity. As a result, the basis functions associated with badly cut cells are removed and the ill-conditioning issues solved. The formulation enjoys good numerical properties, such as stability, condition number bounds, optimal convergence, and continuity with
respect to data; detailed mathematical analysis of the method is included
in~\cite{Badia2018} for elliptic problems and in~\cite{Badia2018a} for the
Stokes equation. \ac{agfem} is amenable to arbitrarily complex 3D geometries, distributed implementations for large scale problems~\cite{Verdugo2019},  error-driven $h$-adaptivity and parallel tree-based meshes~\cite{Badia2020Jun}, explicit time-stepping for the wave equation~\cite{Burman2020Nov} and elliptic interface problems with high contrast~\cite{Neiva2021}.

In this work, we aim to explore the links between the \emph{weak} ghost penalty strategy and the \emph{strong} aggregation-based strategy. In order to do this, we analyse the \emph{strong} form limit of \ac{gp} methods. In this process, we discuss the \emph{locking} phenomenon of current \ac{gp} methods. Next, we make use of the \ac{agfem} machinery to define new \ac{gp} formulations that converge to the classical (strong) \ac{agfem} and thus, are locking-free. We propose two alternative  expressions of the penalty method. The stabilisation term penalises the distance between the solution and its  interpolation onto the \ac{agfem} space. This distance can be expressed using a weighted $L^{2}$ product or an $H^1$ product.

This work is structured as follows. We introduce the geometrical discretisation in Section~\ref{sec:cell-aggregation} and the problem statement in Section~\ref{sec:problem_statement}, which include notations and definitions that are required to implement \ac{gp} and \ac{agfem}. Next, we introduce some \ac{gp} formulations and analyse their \emph{strong} limit in Section~\ref{sec:ghost-penalty}. After that, we present the \ac{agfem} and its underlying discrete extension operator in Section~\ref{sec:agfem}. In this section, we propose a new family of \ac{gp} methods that are a weak version of \ac{agfem}. We comment on the implementation aspects of all the methods in Section~\ref{sec:implementation-aspects}. In Section~\ref{sec:numerical_experiments}, we provide a detailed comparison of all these different schemes, in terms of accuracy and condition number bounds, for different values of the penalty parameter, geometries, and intersection locations. We consider Poisson and linear elasticity problems on isotropic and anisotropic meshes. We draw some conclusions in Section~\ref{sec:conclusions}. The original contributions of the article are:

\begin{enumerate}
  \item A discussion about the links between strong (\ac{agfem}) and weak (\ac{gp}) methods for solving the ill-conditioning of $\mathcal{C}^{0}$ Lagrangian unfitted finite element methods;
  \item A discussion about the locking phenomenon of \ac{gp} strategies in the strong limit and corrective measures;
  \item Design and analysis of \ac{gp} schemes that are a weak versions of \ac{agfem} and locking-free;
  \item A thorough numerical experimentation comparing \ac{gp} methods and strong discrete extension methods in terms of error quantities, condition numbers, sensitivity with respect to penalty coefficients, etc.
\end{enumerate}

\section{Geometrical discretisation}\label{sec:cell-aggregation}

Let us consider an open bounded polyhedral Lipschitz domain $\Omega \subset \mathbb{R}^d$, $d$ being the space dimension, in which we pose our \ac{pde}. Standard \ac{fe} methods rely on a geometrical discretisation of $\Omega$ in terms of a partition of the domain (or an approximation of it). This step involves so-called unstructured mesh generation algorithms. The resulting partition is a \emph{body-fitted} mesh of the domain. Embedded discretisation techniques alleviate geometrical constraints, because they do not rely on body-fitted meshes. Instead, these techniques make use of a background partition $\mathcal{T}_{h}$ of an arbitrary artificial domain $\Omega_{h}^{\mathrm{art}}$ such that $\Omega \subset \Omega_{h}^{\mathrm{art}}$. The artificial domain can be trivial, e.g., it can be a bounding box of $\Omega$. Thus, the computation of $\mathcal{T}_h$ is much simpler (and cheaper) than a body-fitted partition of $\Omega$. For simplicity, we assume that $\mathcal{T}_{h}$ is conforming, quasi-uniform and shape-regular; we represent with $h_T$ the diameter of a cell $T \in \mathcal{T}_{h}$ and define the characteristic mesh size $h \doteq \max_{T \in \mathcal{T}_h} h_T$. We refer to~\cite{Badia2020Jun} for the extension to non-conforming meshes.

The definition of unfitted \ac{fe} discretisations requires some geometrical classifications of the cells in the background mesh $\mathcal{T}_{h}$ and their \emph{n-faces}. We use {n-face} to denote entities in any dimension. E.g., in 3D, 0-faces are vertices, 1-faces are edges, 2-faces are faces and 3-faces are cells. We use \emph{facet} to denote an n-face of dimension $D$-1, i.e., an edge in 2D and a face in 3D.  

The cells in the background partition with null intersection with $\Omega$ are \emph{exterior} cells. The set of exterior cells $\mathcal{T}_{h}^{\mathrm{out}}$  is not considered in the functional discretisation and can be discarded.  $\mathcal{T}_h^{\mathrm{act}} \doteq \mathcal{T}_{h}^{} \setminus \mathcal{T}_{h}^{\mathrm{out}}$ is the \emph{active} mesh. E.g., the \ac{xfem} relies on a standard \ac{fe} space on $\mathcal{T}_{h}^{\mathrm{act}}$. Unfortunately, the resulting discrete system can be singular (see the discussion below). This problem, a.k.a. \emph{small cut cell problem}, is due to cells with arbitrarily small support on $\Omega$. This fact motivates the further classification of cells in $\mathcal{T}_{h}^{\mathrm{act}}$. Let $\mathcal{T}_{h}^{\mathrm{in}}$ be the subset of cells in $\Omega$ and $\mathcal{T}_h^{\mathrm{cut}}$ the cut cells (see Figure~\ref{fig:defs}).\footnote{One can consider a more subtle classification as follows. For any cell $T \in \mathcal{T}_{h}$, we define the quantity $\eta_{T} \doteq \frac{|T \cap \Omega|}{|T|}$, $\left| \cdot \right|$ being the measure of the set. For a given threshold $\eta_{0} \in \left( 0,1 \right]$, we classify cells $T \in \mathcal{T}_{h}^{\mathrm{act}}$ as \emph{well-posed} if $\eta_T \geq \eta_{0}$ and \emph{ill-posed} otherwise. The interior-cut classification is recovered for $\eta_0 = 1$. In any case, the following discussion applies verbatim.} The interior of $\bigcup_{T \in \mathcal{T}_{h}^{\#}} T$ is represented with $\Omega_h^{\#}$ for $\# \in \left\{ \mathrm{act}, \mathrm{in}, \mathrm{cut} \right\}$.

Some of the methods below require the definition of \emph{aggregates}. Let us consider $\mathcal{T}_{h}^{\mathrm{ag}}$ (usually called an agglomerated or aggregated mesh) obtained after a cell aggregation of $\mathcal{T}_{h}^{\mathrm{act}}$, such that each cell in $\mathcal{T}_{h}^{\mathrm{in}}$ only belongs to one aggregate and each aggregate only contains one cell in $\mathcal{T}_{h}^{\mathrm{in}}$ (the root cell). Based on this definition, cell aggregation only acts on the boundary; interior cells that are not in touch with ill-posed cells remain the same after aggregation. Let $\mathcal{T}_h^{\partial,\mathrm{ag}} \doteq \mathcal{T}_{h}^{\mathrm{ag}} \setminus \mathcal{T}_h^{\mathrm{in}}$ be the non-trivial aggregates on the boundary (see Figure~\ref{fig:defs}). We refer the interested reader to \cite{Badia2018} for the definition of cell aggregation algorithms. The aggregation has been extended to non-conforming meshes in \cite{Badia2020Jun} and interface problems in \cite{Neiva2021}, and its parallel implementation described in \cite{Verdugo2019}. It is essential for convergence to minimise the aggregate size in these algorithms. In particular, the characteristic size of an aggregated cell must be proportional to the one of its root cell. The motivation of the cell aggregation is to end up with a new partition in which all cells (aggregates) have support in $\Omega$ away from zero, are connected and are shape-regular.

We represent with $\mathcal{C}_{h}^{\#}$ the (simplicial or hexahedral) \emph{exact complex} of $\mathcal{T}_h^\#$  for $\# \in \left\{  \mathrm{act}, \mathrm{in}, \mathrm{out} \right\}$, i.e., the set of all n-faces of cells in $\mathcal{T}^{\#}_{h}$. $\mathcal{C}_h^{\mathrm{ag}}$ is the subset of n-faces in $\mathcal{C}_h^{\mathrm{act}}$ that lay on the boundaries of aggregates in $\mathcal{T}_{h}^{\mathrm{ag}}$. 
We also define the set of \emph{ghost boundary} facets  $\mathcal{F}_{h}^{\mathrm{gh},\mathrm{cut}}$ as the facets in $\mathcal{C}_{h}^{\mathrm{cut}}$ that belong to two active cells in $\mathcal{T}_{h}^{\mathrm{act}}$ (cut or not). $\mathcal{F}_h^{\mathrm{gh},\mathrm{ag}} \doteq \mathcal{F}_{h}^{\mathrm{gh},\mathrm{cut}} \setminus \mathcal{C}_{h}^{\mathrm{ag}}$ is the subset of these facets that do not lay on aggregate boundaries, see Figure~\ref{fig:defs}.

\definecolor{FigBlue}{RGB}{0,0,255}
\definecolor{FigCyan}{RGB}{0,255,255}
\definecolor{FigPurple}{RGB}{222,135,170}
\definecolor{FigYellow}{RGB}{255,221,85}
\definecolor{FigRed}{RGB}{255,0,0}

\begin{figure}[ht!]
\centering

\begin{tabular}{ccc}
\tikz{\draw[fill=FigYellow] rectangle(1.5ex,1.5ex);} $\in\mathcal{T}^{\mathrm{in}}_h$ &
\tikz{\draw[fill=FigCyan] rectangle(1.5ex,1.5ex);} $\in\mathcal{T}^{\mathrm{cut}}_h$ &
\tikz{\draw[fill=FigPurple] rectangle(1.5ex,1.5ex);} $\in\mathcal{T}_h^{\partial,\mathrm{ag}}$ \\[1em]
\tikz{\draw[color=black,line width=1.5] (0,0)--(1.7ex,1.7ex);}$\in\partial\Omega$ &
\tikz{\draw[color=FigBlue,line width=1.5] (0,0)--(1.7ex,1.7ex);}$\in\mathcal{F}_{h}^{\mathrm{gh},\mathrm{cut}}$ &
\tikz{\draw[color=FigRed,line width=1.5] (0,0)--(1.7ex,1.7ex);}$\in\mathcal{F}_{h}^{\mathrm{gh},\mathrm{ag}}$ \\[1em]
\end{tabular}

\begin{subfigure}{0.24\textwidth}
\centering
\includegraphics[width=0.8\textwidth]{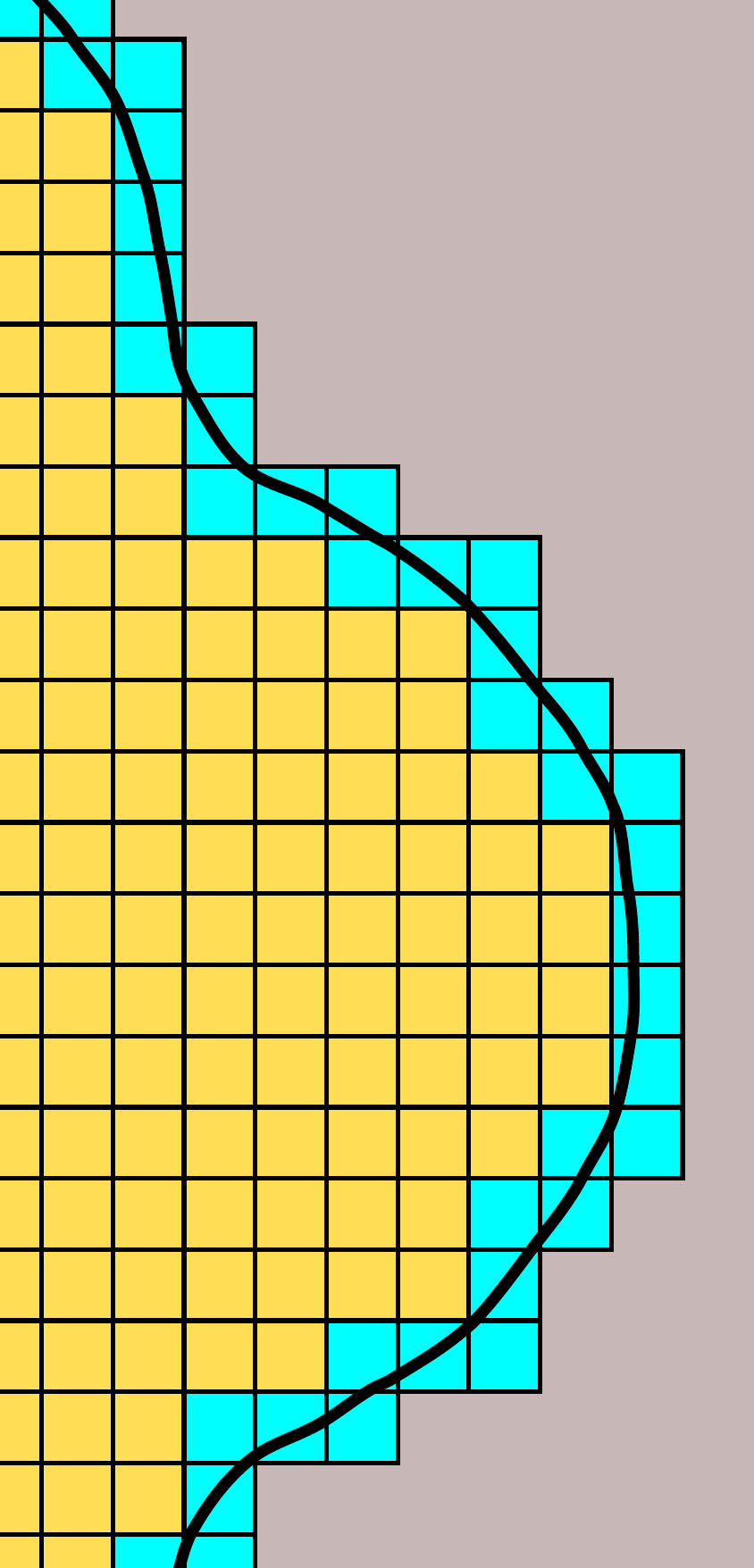}
\caption{}
\end{subfigure}
\begin{subfigure}{0.24\textwidth}
\centering
\includegraphics[width=0.8\textwidth]{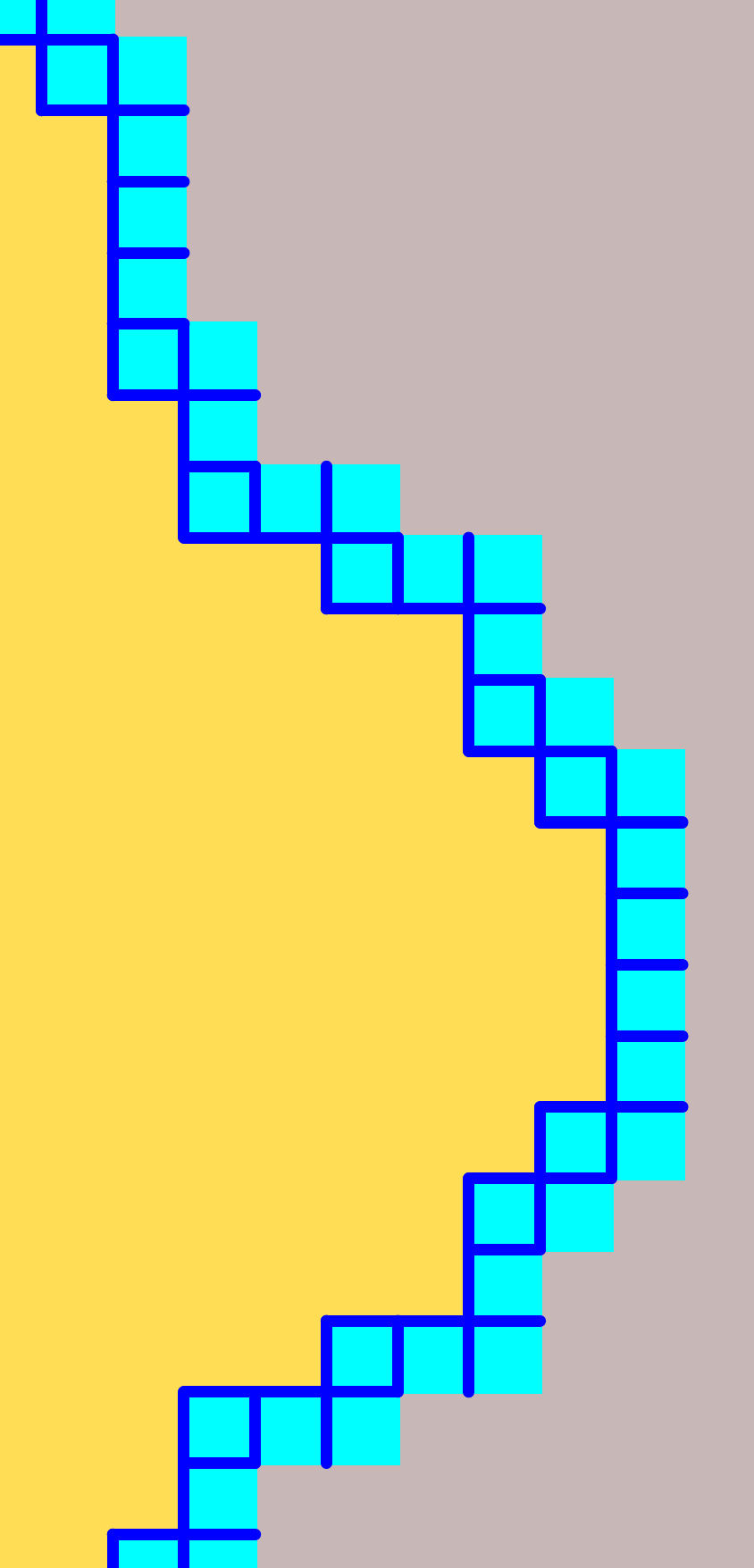}
\caption{}
\end{subfigure}
\begin{subfigure}{0.24\textwidth}
\centering
\includegraphics[width=0.8\textwidth]{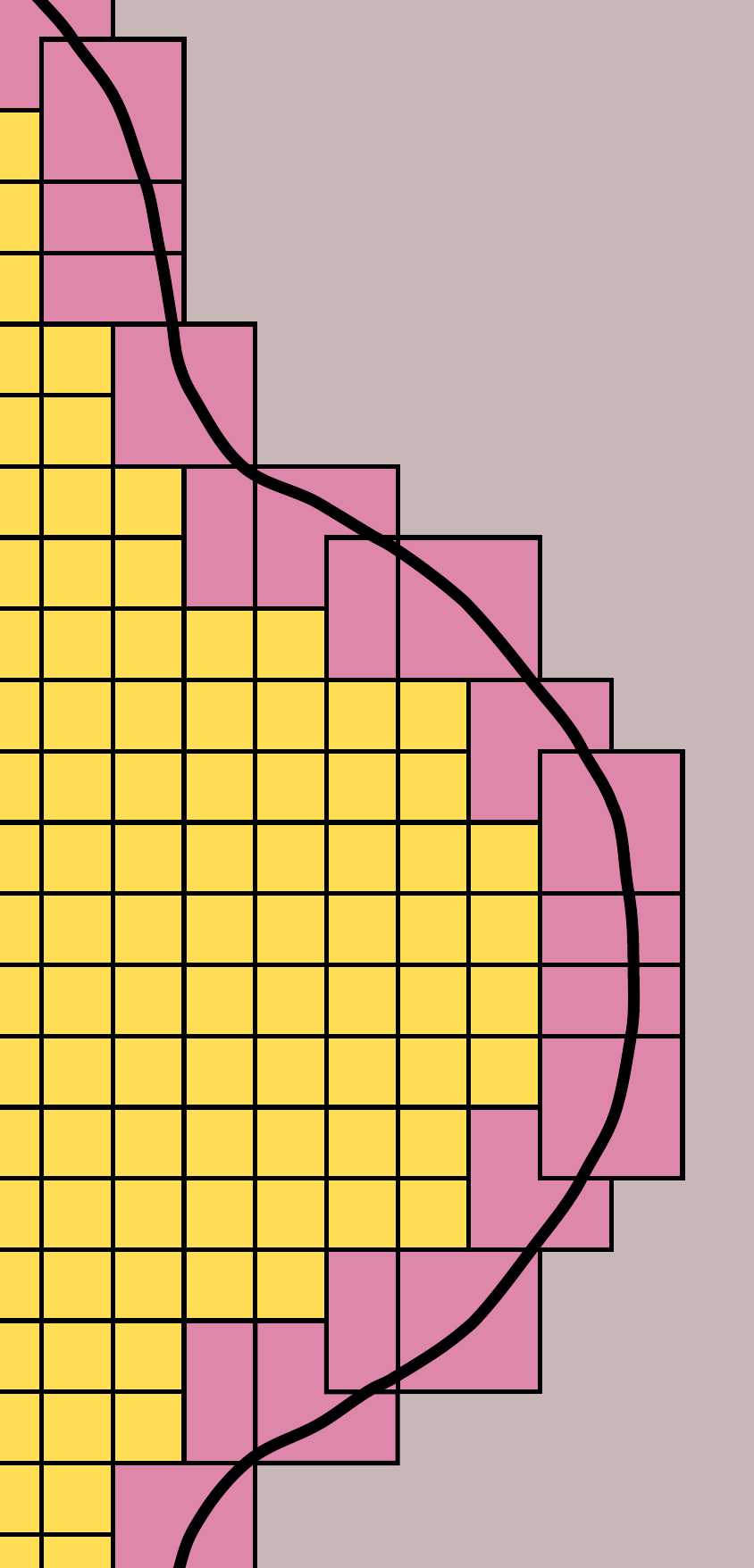}
\caption{}
\end{subfigure}
\begin{subfigure}{0.24\textwidth}
\centering
\includegraphics[width=0.8\textwidth]{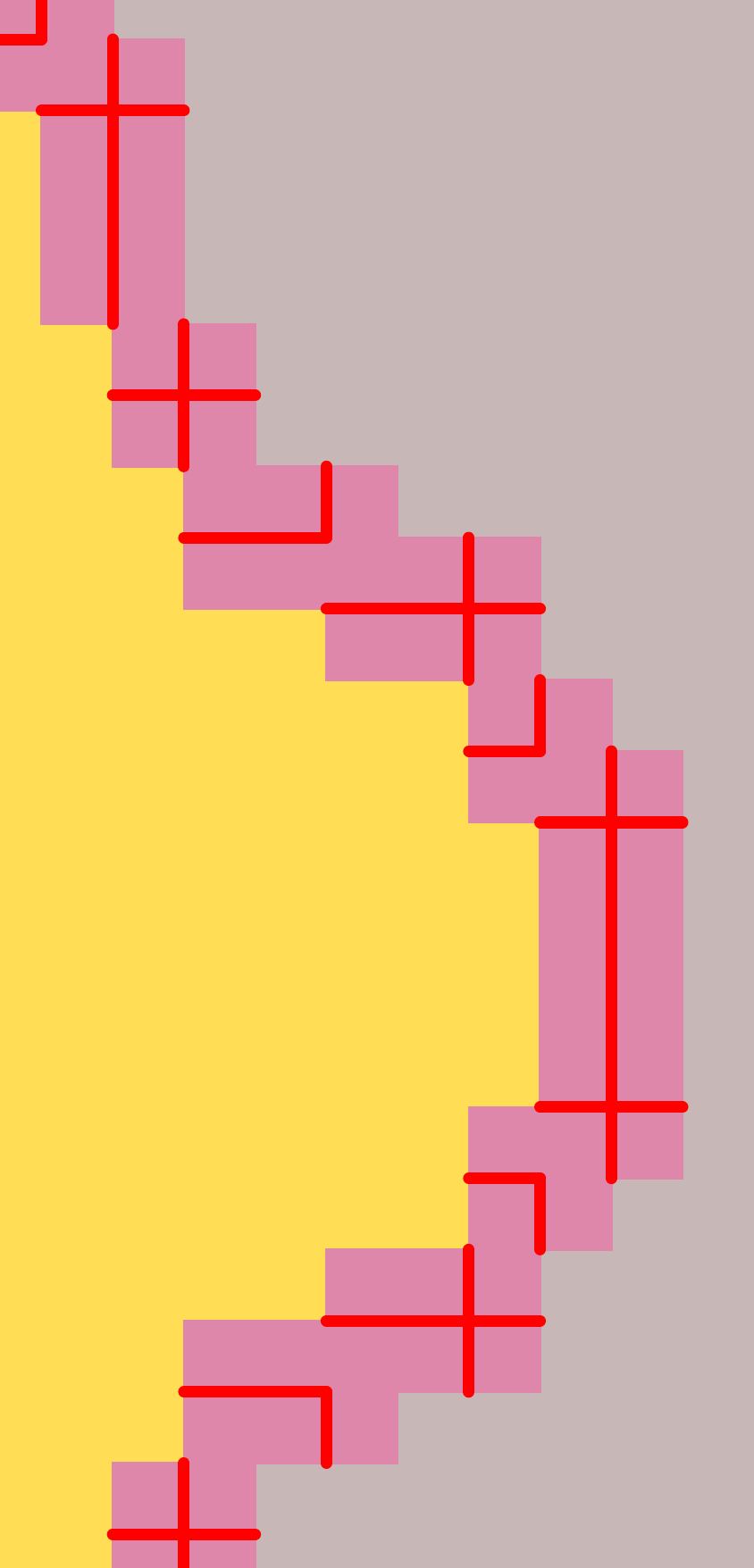}
\caption{}
\end{subfigure}
\caption{Illustration of the main geometrical sets introduced in Sect.~\ref{sec:cell-aggregation}.}
\label{fig:defs}
\end{figure}

\section{Problem statement}\label{sec:problem_statement}

Let us consider first the Poisson equation in $\Omega$ with Dirichlet boundary conditions on $\Gamma_{\mathrm{D}} \subset \partial \Omega$ and Neumann boundary conditions on $\Gamma_{\mathrm{N}} \doteq \partial \Omega \setminus \Gamma_{D}$. After scaling with the diffusion term, the equation reads:
\emph{find} $u \in H^1(\Omega)$ \emph{such that}
\begin{equation}\label{eq:poisson-strong}
	- \boldsymbol{\nabla} \cdot \boldsymbol{\nabla} u = f \quad \text{in} \ H^{-1}(\Omega), \qquad u = g \quad \text{in} \ H^{1/2}(\Gamma_{\mathrm{D}}),  \qquad \boldsymbol{n} \cdot \boldsymbol{\nabla} u = q \quad \text{in} \ H^{-\frac{1}{2}}(\Gamma_{\mathrm{N}}),
\end{equation}
where $f$ is the source term, $g$ is the prescribed value on the Dirichlet boundary and $q$ the prescribed flux on the Neumann boundary. 

The following presentation readily applies to other second-order elliptic problems. In particular, we will also consider the linear elasticity problem:
\emph{find} $\boldsymbol{u} \in \boldsymbol{H}^1(\Omega)$ \emph{such that}
\begin{equation}\label{eq:elasticity-strong}
	- \boldsymbol{\nabla} \cdot \boldsymbol{\sigma}(\boldsymbol{u}) = \boldsymbol{f} \quad \text{in} \ \boldsymbol{H}^{-1}(\Omega), \qquad \boldsymbol{u} = \boldsymbol{g} \quad \text{in} \ \boldsymbol{H}^{1/2}(\Gamma_{\mathrm{D}}),  \qquad \boldsymbol{n} \cdot \boldsymbol{\sigma}(\boldsymbol{u}) = \boldsymbol{q} \quad \text{in} \ \boldsymbol{H}^{-\frac{1}{2}}(\Gamma_{\mathrm{N}}),
\end{equation}
where $\boldsymbol{\epsilon}, \boldsymbol{\sigma} : \Omega \to \mathbb{R}^{d,d}$ are the strain tensor
$\boldsymbol{\epsilon}(\boldsymbol{u}) \doteq \frac{1}{2} (\boldsymbol{\nabla} \boldsymbol{u} + { \boldsymbol{\nabla} \boldsymbol{u}}^T)$ and stress tensor
$\boldsymbol{\sigma} (\boldsymbol{u}) = 2 \mu \boldsymbol{\epsilon}(\boldsymbol{u}) + \lambda \mathrm{tr} (\boldsymbol{\epsilon}(\boldsymbol{u})) \mathbf{Id}$; 
$\mathbf{Id}$ denotes the identity matrix in $\mathbb{R}^d$. $(\mu, \lambda)$ are the the Lamé coefficients. We consider the Poisson ratio $\nu \doteq
\lambda / ( 2 ( \lambda + \mu ) )$ is bounded away from
$1/2$, i.e.~the material is compressible. Since $\lambda = 2\nu
\mu / (1 - 2\nu)$, $\lambda$ is bounded above by
$\mu$, i.e.~$\lambda \leq C \mu$, $C > 0$. 

The simplification of the geometrical discretisation, in turn, complicates the functional discretisation. Standard (body-fitted) \acp{fem} cannot be straightforwardly used. First, the strong imposition of Dirichlet boundary conditions relies on the fact that the mesh is body-fitted. In an embedded setting, Dirichlet boundary conditions are weakly imposed instead. Second, the cell-wise integration of the \ac{fe} forms is more complicated; integration must be performed on the intersection between cells and $\Omega$ only. Third, naive discretisations can be arbitrarily ill-posed.

Let $\mathcal{V}_{h}^{\mathrm{act}}$ be a standard Lagrangian \ac{fe} space on $\mathcal{T}_{h}^{\mathrm{act}}$. As stated above, we consider a weak imposition of boundary conditions with Nitsche's method~\cite{Badia2018,Schillinger2015,burman_cutfem_2015}. This approach provides a consistent numerical scheme with optimal convergence for arbitrary order \ac{fe} spaces. According to this, we approximate the Poisson problem in \eqref{eq:poisson-strong} with the Galerkin method: 
find $u_h \in \mathcal{V}_h^{\rm act}$ such that $a_h(u_h,v_h)= b_h(v_h)$
 for any $v_h \in \mathcal{V}_h^{\rm act}$, with
 \begin{equation}\label{eq:poisson-weak}
 	\begin{array}{l}
 		\displaystyle {a}_h(u_h,v_h) \doteq \int_{\Omega} \boldsymbol{\nabla} u_h \cdot
 		\boldsymbol{\nabla} v_h \mathrm{\ d}\Omega \ + \int_{\Gamma_{\mathrm{D}}} \left( \tau u_h v_h  - u_h
 		\left( \boldsymbol{n} \cdot \boldsymbol{\nabla} v_h \right ) - v_h \left( \boldsymbol{n}
 		\cdot \boldsymbol{\nabla} u_h \right) \right) \mathrm{\ d}{\Gamma}, \quad \text{and} \\
 		\displaystyle {b}_h(v_h) \doteq \int_{\Omega} f v_h  \mathrm{\ d}\Omega \
 		+ \int_{\Gamma_{\mathrm{D}}} \left( \tau g v_h  - \left( \boldsymbol{n} \cdot \boldsymbol{\nabla} v_h
 		\right ) g \right) \mathrm{\ d}\Gamma + \int_{\Gamma_{\mathrm{N}}}^{} q v_h \ \mathrm{d}\Gamma,
 	\end{array}
 \end{equation}
with $\boldsymbol{n}$ being the outward unit normal on $\partial \Omega$. In the case of the linear elasticity problem in~(\ref{eq:elasticity-strong}), the bilinear form and linear functional read: 
\begin{align}\label{eq:elasticity-weak}
	a_h(\boldsymbol{u}_h,\boldsymbol{v}_h) & \doteq \int_{\Omega} \boldsymbol{\sigma}(\boldsymbol{u}_h) : \boldsymbol{\epsilon}(\boldsymbol{v}_h) \ \mathrm{d} \Omega \ +  
   \int_{\Gamma_{\mathrm{D}}} \left( \tau {\boldsymbol{u}_h} \cdot 
	 {\boldsymbol{v}_h} - \boldsymbol{n} \cdot \boldsymbol{\sigma}(\boldsymbol{v}_h)
	 \cdot {\boldsymbol{u}_h} - \boldsymbol{n} \cdot 
	 {\boldsymbol{\sigma}(\boldsymbol{u}_h)} \cdot {\boldsymbol{v}_h} \right) \ \mathrm{d} \Gamma  , \\
	 b_h(\boldsymbol{v}_h) &\doteq \int_{\Omega} \boldsymbol{f} \cdot \boldsymbol{v}_h \ \mathrm{d} \Gamma \ +
   \int_{\Gamma_{\mathrm{D}}} ( \tau \boldsymbol{g} \cdot 
	 {\boldsymbol{v}_h} - \boldsymbol{n} \cdot {\boldsymbol{\sigma}(\boldsymbol{v}_h)} 
	 \cdot \boldsymbol{g} ) \ \mathrm{d} \Gamma + \int_{\Gamma_{\mathrm{D}}} \boldsymbol{q} \cdot  \boldsymbol{v}_h \ \mathrm{d} \Gamma.
\end{align}

We note that the second term in all the forms above are associated with the weak imposition of Dirichlet boundary conditions with Nitsche's method~\cite{nitsche_uber_2013}.\footnote{Other weak imposition of boundary conditions involve the penalty method or a non-symmetric version of Nitsche's method \cite{Freund1995}. In any case, the penalty formulation is not weakly consistent for higher order methods and the non-symmetric formulation breaks the symmetry of the system and is not adjoint consistent.}  Stability of Lagrangian \acp{fem} relies, e.g., for the Poisson problem, on the following property:
\begin{equation}\label{eq:nistche-condition}
  \begin{array}{l}
    \int_{\Gamma_{\mathrm{D}} \cap T} \left( \tau_T u_h^2  - 2 u_h
    \left( \boldsymbol{n} \cdot \boldsymbol{\nabla} u_h \right ) \right) \mathrm{\ d}{\Gamma}
    \leq C \int_{\Gamma_{\mathrm{D}} \cap T}^{} \tau_T u_{h}^{2} \ \mathrm{d} \Gamma + \|\boldsymbol{\nabla}u_h\|^2_{\boldsymbol{L}^2(T)}, \ \ \forall T\in\mathcal{T}_h^{\mathrm{cut}}
  \end{array}
\end{equation}
for some $C>0$ independent of $h_T$. 

A value of $\tau_T$ that satisfies~\eqref{eq:nistche-condition} can be computed using a cell-wise eigenvalue problem. 
In shape-regular body-fitted meshes, one can simply use $\tau_T \doteq {\beta}{h_{T}^{-1}}$. {Here, $\beta = \tilde{\beta} m^2$, with $\tilde{\beta}$ a \emph{large enough} problem-dependent parameter and $m$ the order of $\mathcal{V}_{h}^{\mathrm{act}}$; while $h_T$ is the diameter of $T$}. For \ac{xfem}, we only have stability over $\|\boldsymbol{\nabla}u_h\|^2_{\boldsymbol{L}^2(T \cap \Omega)}$ in the right-hand side of~(\ref{eq:nistche-condition}). In this case, the minimum value of $\tau_T$ that makes the problem stable tends to infinity in some cell configurations as $|T \cap \Omega | \to 0$. As a result, unfitted \acp{fem} like \ac{xfem} are not robust to cell cut locations (boundaries or interfaces). 

Condition number bounds for the resulting linear system are strongly linked to the stability issue commented above. A shape function of the finite element space $\phi_{i}$ must satisfy, in at least one cell $T \in \mathcal{T}_{h}^{\mathrm{act}}$,
\begin{equation}\label{eq:mass-bound}
  C_- h_T^d \leq \int_{T \cap \Omega}^{} \phi_i^2 \ \mathrm{d} T \leq C_+ h_T^d,
\end{equation} 
for $C_+ > C_- > 0$ independent of $h_T$. This condition can be proved for Lagrangian \acp{fe} on body-fitted meshes ($T \cap \Omega = T$). It implies that the mass matrix is spectrally equivalent to the identity matrix for quasi-uniform and shape-regular partitions. However, it is obvious to check that this condition fails as $|T \cap \Omega | \to 0$ since $\int_{T}^{} \phi_i^2 \ \mathrm{d} T \to 0$. The plain vanilla \ac{fem} on the background mesh leads to severely ill-conditioned linear systems. Recent research works have tried to solve this issue at the preconditioner level (see, e.g.,~\cite{DePrenter2017,badia_robust_2017}). 

In the coming sections, we will present methods that solve the previous problems by providing stability over full background mesh cells $T \in \mathcal{T}_{h}^{\mathrm{act}}$ , i.e., $T$ instead of $T \cap \Omega$. With these methods, we can use the same expression of $\tau_T$ as in body-fitted meshes, $h_T$ being the background cell size. We will often make abuse of notation, using $u_h$ to refer to both the scalar unknown in~(\ref{eq:poisson-weak}) and the vector unknown in~(\ref{eq:elasticity-weak}), making distinctions where relevant. We use $A \gtrsim B$ (resp. $A \lesssim B$) to denote $A \geq C B$ (resp. $A  \leq CB$) for some positive constant $C$ that does not depend on $h$ and the location of the cell cuts. Uniform bounds irrespectively of boundary or interface locations is the driving motivation behind all these methods.

\section{Ghost penalty}\label{sec:ghost-penalty}

The ghost penalty formulation was originally proposed in \cite{burman2010ghost} to fix the ill-posedness problems of \ac{xfem} discussed above. The method adds a stabilising bilinear form $s_h$ to the formulation that provides an \emph{extended} stability while preserving the convergence rates of the body-fitted method. 

\begin{definition}\label{def:suitable-ghost-penalty}
  Let $n \leq m$,  where $m$ is the order of $\mathcal{V}_{h}^{\mathrm{act}}$, and $\mathcal{E}^{n}: H^n(\Omega) \rightarrow H^n(\Omega_{h}^{\mathrm{act}})$ be a continuous extension operator. Let $\mathcal{V}(h) \doteq \mathcal{V}_{h}^{\mathrm{act}} + H^2(\Omega_{h}^{\mathrm{act}})$. We endow $\mathcal{V}(h)$ with the extended stability norm:
  \begin{equation}\label{eq:norm-gp}
    \tnor{v}^{2}_{\mathcal{V}(h)} \doteq \|\boldsymbol{\nabla}v\|^{2}_{\boldsymbol{L}^2(\Omega_{h}^{\mathrm{act}})} +  \| \tau^{\frac{1}{2}} v \|_{L^2(\Gamma_{\mathrm{D}})}^2 
    + \sum_{T \in \mathcal{T}_{h}^{\mathrm{act}}}^{} h_T^{2} \| v \|_{H^2(T)}^{2}, \qquad \forall u \in \mathcal{V}(h).
    \end{equation}
  A suitable \ac{gp} term $s_h$ for problems~(\ref{eq:poisson-weak}) and~(\ref{eq:elasticity-weak}) is a positive semi-definite symmetric form that satisfies the following properties uniformly w.r.t.~the mesh size $h$ of the background mesh and interface intersection:
  \begin{enumerate}
	  \item[(i)] Extended stability: 
    \begin{equation}\label{eq:gp-extended-stability}
      s_{h}(u_{h},u_{h}) + \|\boldsymbol{\nabla}u_h\|^{2}_{\boldsymbol{L}^2(\Omega)} \gtrsim 
      \|\boldsymbol{\nabla}u_h\|^{2}_{\boldsymbol{L}^2(\Omega_{h}^{\mathrm{act}})}, \qquad \forall u_h \in \mathcal{V}_{h}^{\mathrm{act}},
    \end{equation}
\item[(ii)] Continuity:
\begin{equation}\label{eq:gp-continuity}
  s_h(u,v_h) \lesssim \tnor{u}_{\mathcal{V}(h)} \tnor{v_h}_{\mathcal{V}(h)}, \qquad \forall u \in \mathcal{V}(h), \ v_h \in \mathcal{V}_{h}^{\mathrm{act}}.
\end{equation}
\item[(iii)] Weak consistency: 
\begin{equation}\label{eq:gp-weak-consistency}
   s_h(\mathcal{E}^{n+1}(u),v_h) \lesssim h^{n} \| u \|_{H^{n+1}(\Omega)} \tnor{v_h}_{\mathcal{V}(h)}, \qquad  \forall u \in H^{n+1}(\Omega).
\end{equation} 
\end{enumerate}
\end{definition}

Under these conditions, the \ac{gp} unfitted formulation is well-posed and exhibits optimal convergence rates. The following abstract results have been proved in the literature for specific definitions of $s_h$ that satisfy Def.~\ref{def:suitable-ghost-penalty}. See, e.g., \cite[Lem.~4.2, Lem.~4.3, Prop.~4.4]{burman2010ghost}, \cite[Sect.~4]{Burman2012} and  \cite[Sec.~4]{Hansbo2017}. Note that the norm includes control, not only over $\| \boldsymbol{\nabla} v_h \|_{\boldsymbol{L}^2(\Omega)}$ (which comes from the standard Laplacian term in the formulation), but $\| \boldsymbol{\nabla} v_h \|_{\boldsymbol{L}^2(\Omega_h^{\mathrm{act}})}$. This extra stabilisation in~(\ref{eq:gp-extended-stability}) is provided by the \ac{gp} term and essential for well-posedness independent of the boundary location. 

\begin{proposition}\label{prop:well-posedness-gp}
  Let $s_h$ satisfy Def.~\ref{def:suitable-ghost-penalty}. It holds:
  \begin{align}\label{eq:coercive-continuous}
    a_h(u_h,u_h) + s_h(u_h,u_h) \gtrsim \| u_h \|^2_{\mathcal{V}(h)}, \qquad
    a_h(u,v_h)  + s_h(u,v_h) \lesssim \|u\|_{\mathcal{V}(h)} \|v_h\|_{\mathcal{V}(h)},
    \end{align} 
    for any $u_h, v_h \in  \mathcal{V}_{h}^{\rm act}$, $u \in \mathcal{V}(h)$. Thus, there is a unique 
  \begin{equation}\label{eq:gp-weak-poisson}
    u_h \in \mathcal{V}_h^{\rm act} \ : \ a_h(u_h,v_h) + s_h(u_h,v_h)= b(v_h), \quad \forall v_h \in \mathcal{V}_{h}^{\mathrm{act}}.
  \end{equation}
  Furthermore, the condition number $\kappa$ of the resulting linear system when using a standard Lagrangian basis for $\mathcal{V}_{h}^{\mathrm{act}}$  holds $\kappa \lesssim h^{-2}$. 
\end{proposition}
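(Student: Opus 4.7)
The plan is to verify coercivity and continuity of $a_h+s_h$ on $\mathcal{V}_h^{\mathrm{act}}$ in the norm $\|\cdot\|_{\mathcal{V}(h)}$, conclude existence and uniqueness of $u_h$ by Lax--Milgram, and derive the condition number bound by a standard spectral-equivalence computation. The three structural properties~(\ref{eq:gp-extended-stability})--(\ref{eq:gp-weak-consistency}) of Def.~\ref{def:suitable-ghost-penalty}, the Nitsche condition~(\ref{eq:nistche-condition}), and the mass bound~(\ref{eq:mass-bound}) furnish exactly the ingredients needed.

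For coercivity, I expand $a_h(u_h,u_h)$ and treat the Nitsche cross terms $-2\int_{\Gamma_D} u_h(\boldsymbol{n}\cdot\boldsymbol{\nabla} u_h)\,\mathrm{d}\Gamma$ cell by cell via Young's inequality combined with~(\ref{eq:nistche-condition}). With $\tilde{\beta}$ large enough, they are absorbed into $\delta\|\tau^{1/2} u_h\|^2_{L^2(\Gamma_D)}+C_\delta\sum_{T\in\mathcal{T}_h^{\mathrm{cut}}}\|\boldsymbol{\nabla} u_h\|^2_{L^2(T)}$ with $\delta$ arbitrarily small. The gradient on cut cells---crucially over $T$ rather than $T\cap\Omega$---is controlled by the extended-stability hypothesis~(\ref{eq:gp-extended-stability}), which bounds $\|\boldsymbol{\nabla} u_h\|^2_{L^2(\Omega_h^{\mathrm{act}})}$ by $\|\boldsymbol{\nabla} u_h\|^2_{L^2(\Omega)}+s_h(u_h,u_h)$. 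Choosing constants appropriately, $a_h(u_h,u_h)+s_h(u_h,u_h)\gtrsim\|\boldsymbol{\nabla} u_h\|^2_{L^2(\Omega_h^{\mathrm{act}})}+\|\tau^{1/2} u_h\|^2_{L^2(\Gamma_D)}$; the remaining contribution $\sum_T h_T^2\|u_h\|^2_{H^2(T)}$ of $\|\cdot\|_{\mathcal{V}(h)}$ comes \emph{for free} from the elementwise inverse inequality $h_T^2\|u_h\|^2_{H^2(T)}\lesssim\|\boldsymbol{\nabla} u_h\|^2_{L^2(T)}$ on background cells.

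Continuity follows by Cauchy--Schwarz on each volumetric and boundary term of $a_h$, plus~(\ref{eq:gp-continuity}) for $s_h$. The delicate point is the Nitsche term $\int_{\Gamma_D} v_h(\boldsymbol{n}\cdot\boldsymbol{\nabla} u)\,\mathrm{d}\Gamma$ for $u\in\mathcal{V}(h)=\mathcal{V}_h^{\mathrm{act}}+H^2(\Omega_h^{\mathrm{act}})$: the FE part of $u$ is handled via a discrete inverse trace inequality, while the $H^2$ part requires the trace estimate $\|\boldsymbol{n}\cdot\boldsymbol{\nabla} u\|_{L^2(\Gamma_D\cap T)}\lesssim\tau_T^{1/2}\|\boldsymbol{\nabla} u\|_{L^2(T)}+\tau_T^{-1/2}\|u\|_{H^2(T)}$, which is exactly why the $h_T^2\|\cdot\|^2_{H^2(T)}$ term is baked into $\|\cdot\|_{\mathcal{V}(h)}$. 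Existence and uniqueness of $u_h$ then follow from Lax--Milgram on the finite-dimensional space $\mathcal{V}_h^{\mathrm{act}}$.

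For the condition number, I identify $\mathcal{V}_h^{\mathrm{act}}\cong\mathbb{R}^N$ through the nodal Lagrangian basis. The mass bound~(\ref{eq:mass-bound}), valid on full background cells thanks to~(\ref{eq:gp-extended-stability}), together with shape regularity, gives $\|v_h\|^2_{L^2(\Omega_h^{\mathrm{act}})}\sim h^d\|\mathbf{v}\|^2_{\ell^2}$. Continuity of $a_h+s_h$ and the inverse inequality $\|\boldsymbol{\nabla} v_h\|_{L^2(\Omega_h^{\mathrm{act}})}\lesssim h^{-1}\|v_h\|_{L^2(\Omega_h^{\mathrm{act}})}$ yield $\lambda_{\max}(A)\lesssim h^{d-2}$; coercivity together with a Poincaré--Friedrichs-type inequality on $\Omega_h^{\mathrm{act}}$ (the $\|\tau^{1/2}\cdot\|_{L^2(\Gamma_D)}$ term in the norm eliminates the constant mode) gives $\lambda_{\min}(A)\gtrsim h^d$, whence $\kappa\lesssim h^{-2}$. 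The main obstacle throughout is to migrate from $T\cap\Omega$-based estimates to full-cell $T$-based ones; this migration is precisely what~(\ref{eq:gp-extended-stability}) enables, and without it both the Nitsche absorption in~(\ref{eq:nistche-condition}) and the mass-matrix spectral equivalence in~(\ref{eq:mass-bound}) would degrade as $|T\cap\Omega|\to 0$.
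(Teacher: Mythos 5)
Your proposal is correct and follows essentially the same route as the paper: coercivity by absorbing the Nitsche cross terms through the discrete inverse-trace inequality on full background cells, with the cut-cell gradients recovered from the ghost-penalty extended stability~(\ref{eq:gp-extended-stability}); continuity from trace estimates plus~(\ref{eq:gp-continuity}); and the condition number via mass-matrix spectral equivalence~(\ref{eq:mass-bound}), the inverse inequality and a Poincar\'e argument. The only small slip is the claimed cell-wise bound $h_T^2\|u_h\|^2_{H^2(T)}\lesssim\|\boldsymbol{\nabla} u_h\|^2_{L^2(T)}$, which fails for the $L^2$ part of the $H^2$ norm (constants violate it); that contribution should instead be absorbed using the global Poincar\'e--Friedrichs inequality with the $\|\tau^{1/2}\cdot\|_{L^2(\Gamma_{\mathrm{D}})}$ term, exactly as in your condition-number step.
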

\begin{proof}
The main ingredient is the following trace inequality for continuous functions on cut cells (see~\cite{hansbo2002unfitted}):
\begin{equation}\label{eq:trace-inequality}
  \| \psi \|^2_{L^2(\partial(\Omega \cap T))} \lesssim h_{T}^{-1} \| \psi 
  \|^2_{L^2(\Omega \cap T)} + h_{T} \left| \psi \right|^{2}_{H^1
  (\Omega \cap T)}, \qquad \forall \psi \in H^1(\Omega \cap T),
\end{equation}
where $\partial(\Omega \cap T)$ is the boundary of $\Omega\cap T$. Using a standard discrete inequality on $\mathcal{T}_{h}^{\mathrm{act}}$, namely $\| \boldsymbol{\nabla} u_h \|_{\boldsymbol{H}^1(T)} \lesssim h_T^{-1} \| u_h \|_{L^2(T)}$ for any $u_h \in \mathcal{V}_{h}^{\mathrm{act}}$ and $T \in \mathcal{T}_{h}^{\mathrm{act}}$,   we readily get
\begin{equation}\label{eq:inverse-trace-inequality}
  \| \boldsymbol{n} \cdot \boldsymbol{\nabla} u_h \|_{L^2( \Gamma_{\rm D} \cap T )}^2 \lesssim h_T^{-1} \| \boldsymbol{\nabla} u_h \|_{\boldsymbol{L}^2( T )}^2, \qquad \forall u_h \in \mathcal{V}_{h}^{\mathrm{act}}.
\end{equation}
Using~\eqref{eq:gp-extended-stability}, Cauchy-Schwarz and Young inequalities, (\ref{eq:inverse-trace-inequality}) and the expression for $\tau$, we can prove~(\ref{eq:nistche-condition}). Thus, the \ac{gp} provides a stronger coercivity result, namely
\begin{equation}\label{gp-coercivity}
  a(u_h,u_h) + s(u_h,u_h) \gtrsim \tnor{u_h}_{\mathcal{V}(h)}^{2}, \qquad \forall u_h \in \mathcal{V}_{h}^{\mathrm{act}},
\end{equation} 
for the same definition of $\tau$ that is needed in body-fitted meshes, i.e., the value of $\beta$ does not depend on the location of the cell cut. Continuity in (\ref{eq:coercive-continuous}) is obtained invoking (\ref{eq:trace-inequality}), the standard inverse inequality $\|\boldsymbol{\nabla} u_h\|_{L^2(T)} \lesssim h_T^{-1} \|u_h\|_{L^2(T)}$ and (\ref{eq:gp-continuity}). The cut-independent condition number bound is a by-product of the enhanced coercivity in~(\ref{eq:coercive-continuous}). Using the Poincar\'e inequality, one can readily prove that $\| u_h \|_{L^2(\Omega_h^{\mathrm{act}})} \lesssim \| u_h \|_{\mathcal{V}(h)}$. Using the inverse inequality on the background mesh and~(\ref{eq:inverse-trace-inequality}) one can prove that $\| u_h \|_{\mathcal{V}(h)} \lesssim h^{-1} \| u_h \|_{L^2(\Omega_h^{\mathrm{act}})}$. Finally, one can use the standard bounds for the mass matrix on the background mesh~(\ref{eq:mass-bound}) to prove the result (see, e.g., \cite[Lem.~4.3]{burman2010ghost}).
\end{proof} 
  \begin{proposition}\label{eq:error-estimates-gp}
  Let $m$ be the order of $\mathcal{V}_{h}^{\mathrm{act}}$. If $u \in H^{n+1}(\Omega)$, $m \geq n \geq 1$, is the solution of~(\ref{eq:poisson-weak}) or (\ref{eq:elasticity-weak}) and $u_h \in \mathcal{V}^{\mathrm{act}}_h$ is the
	solution of \eqref{eq:gp-weak-poisson},  then:
	\begin{equation}
		\|u-u_h\|_{\mathcal{V}(h)} \lesssim h^{n} 
		| u |_{H^{n+1}(\Omega)}, \qquad 
		\|{u-u_h}\|_{L^2(\Omega)} \lesssim h^{n+1} 
		| u |_{H^{n+1}(\Omega)}.
	\end{equation}
\end{proposition}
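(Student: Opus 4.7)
The plan is to adapt the standard Strang--type error analysis for stabilised methods, taking care that $u$ lives only on $\Omega$ while $s_h$ tests functions on $\Omega_h^{\mathrm{act}}$. First I would set $\tilde u \doteq \mathcal{E}^{n+1}(u) \in H^{n+1}(\Omega_h^{\mathrm{act}})$ and choose a Scott--Zhang/Cl\'ement-type quasi-interpolant $\pi_h \tilde u \in \mathcal{V}_h^{\mathrm{act}}$ satisfying the standard broken estimate
\begin{equation*}
\|\tilde u - \pi_h \tilde u\|_{\mathcal{V}(h)} \lesssim h^n |\tilde u|_{H^{n+1}(\Omega_h^{\mathrm{act}})} \lesssim h^n |u|_{H^{n+1}(\Omega)},
\end{equation*}
where the last step uses continuity of $\mathcal{E}^{n+1}$. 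Splitting $u - u_h = (u - \pi_h \tilde u) + (\pi_h \tilde u - u_h) \doteq \eta + e_h$, the interpolation piece $\eta$ is already controlled (even after restriction to $\Omega$), so the task reduces to bounding $\|e_h\|_{\mathcal{V}(h)}$.

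Next I would invoke the coercivity from Proposition~\ref{prop:well-posedness-gp} to obtain $\|e_h\|_{\mathcal{V}(h)}^{2} \lesssim a_h(e_h,e_h) + s_h(e_h,e_h)$. Using strong consistency of the Nitsche form, $a_h(u,e_h) = b_h(e_h)$, together with the discrete equation~\eqref{eq:gp-weak-poisson}, the right-hand side rewrites as
\begin{equation*}
a_h(\pi_h \tilde u - u, e_h) \;+\; s_h(\pi_h \tilde u - \tilde u, e_h) \;+\; s_h(\tilde u, e_h).
\end{equation*}
The first two terms are handled by the continuity bounds in Proposition~\ref{prop:well-posedness-gp} and Definition~\ref{def:suitable-ghost-penalty}(ii), each picking up the factor $h^n |u|_{H^{n+1}(\Omega)}$ from the interpolation estimate. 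The last term is precisely what the weak consistency~\eqref{eq:gp-weak-consistency} was designed for: it contributes $h^n \|u\|_{H^{n+1}(\Omega)} \|e_h\|_{\mathcal{V}(h)}$. Cancelling a factor of $\|e_h\|_{\mathcal{V}(h)}$ and combining with the triangle inequality against $\eta$ yields the energy-norm estimate.

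For the $L^2$ bound I would invoke an Aubin--Nitsche duality argument. Let $z$ solve the adjoint problem with right-hand side $u - u_h$ and homogeneous Dirichlet data on $\Gamma_{\rm D}$; assuming $H^2$-regularity, $\|z\|_{H^2(\Omega)} \lesssim \|u - u_h\|_{L^2(\Omega)}$. Testing the residual equation against $\mathcal{E}^2 z - \pi_h \mathcal{E}^2 z$, using symmetry of $a_h + s_h$, the energy-norm estimate already proved, the interpolation bound applied with $n = 1$ to $\mathcal{E}^2 z$, and finally weak consistency~\eqref{eq:gp-weak-consistency} to absorb both $s_h(\tilde u, \pi_h \mathcal{E}^2 z)$ and $s_h(u_h, \mathcal{E}^2 z - \pi_h \mathcal{E}^2 z)$, one gains a full extra factor of $h$. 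Dividing by $\|u - u_h\|_{L^2(\Omega)}$ delivers the optimal $L^2$ rate.

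The main obstacle I anticipate is the bookkeeping around the extension operator: $a_h$ only sees $u$ on $\Omega$, whereas $s_h$ needs the extended function $\tilde u$ on the active mesh, so Galerkin orthogonality is only \emph{perturbed} and the perturbation is $s_h(\tilde u, \cdot)$; absorbing it cleanly is exactly what property (iii) of Definition~\ref{def:suitable-ghost-penalty} was tailored for. The remaining ingredients --- the trace inequality~\eqref{eq:trace-inequality}, standard inverse estimates, continuity of $\mathcal{E}^{n+1}$ and $\mathcal{E}^2$, and classical polynomial approximation on the active background mesh --- are all standard.
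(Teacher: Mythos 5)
Your proposal is correct and follows essentially the same route the paper intends: its one-line proof simply defers to the standard Strang-type argument based on approximation bounds for $\mathcal{E}^{n+1}(u)$ in $\mathcal{V}_{h}^{\mathrm{act}}$ and the weak consistency property of $s_h$, which you have spelled out in detail. Your Aubin--Nitsche duality step for the $L^2$ bound (with the implicit $H^2$ elliptic-regularity assumption) matches the standard treatment in the references the paper cites.
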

\begin{proof}
The convergence analysis relies on standard approximation error bounds for $\mathcal{E}^{n+1}(u)$ in $\mathcal{V}_{h}^{\mathrm{act}}$ and the weak consistency in (\ref{eq:gp-continuity}).     
\end{proof}

In the following, we show some definitions of $s_h$ with these properties.

\subsection{\ac{bgp}}\label{sub:bulk-gp}

The \emph{bulk} ghost penalty stabilisation was originally proposed in \cite{burman2010ghost}. The idea of this formulation is to add the stabilisation term
\begin{equation}\label{eq:bulk-penalty}
s_{h}(u_{h},v_{h}) \doteq \sum_{U \in \mathcal{T}_{h}^{\partial,\mathrm{ag}}}   \left( \gamma h_{U}^{-2}(u_h - \pi_U(u_h)) , v_h \right)_U,       
\end{equation}
where $\pi_U(\cdot)$ represents the $L^2(U)$ projection onto the polynomial space $\mathcal{P}_p(U)$ (for simplicial meshes) or $\mathcal{Q}_p(U)$ (for hexahedral meshes). $\gamma > 0$ is a numerical parameter that must be chosen. This method penalises the difference between the restriction of the \ac{fe} solution in the aggregate and a local polynomial space on the aggregate. Since the aggregate-wise polynomial spaces have enough support in $\Omega$ by construction, it provides stability on ill-posed cells.

\begin{proposition}\label{prop:bulk-gp-suitable}
The expression of $s_h$ in~(\ref{eq:bulk-penalty}) is a suitable \ac{gp} stabilisation that satisfies the conditions (\ref{eq:gp-extended-stability})-(\ref{eq:gp-weak-consistency}) in Def.~\ref{def:suitable-ghost-penalty}.  
\end{proposition}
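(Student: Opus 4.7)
The plan is to verify the three conditions of Def.~\ref{def:suitable-ghost-penalty} one by one, using the fact that each aggregate $U \in \mathcal{T}_h^{\partial,\mathrm{ag}}$ contains exactly one interior root cell $T_U \in \mathcal{T}_h^{\mathrm{in}}$ (hence $T_U \subset \Omega$) and that $U$ is shape-regular with $h_{T_U} \sim h_U$ by the assumptions on the aggregation algorithm. Throughout I would exploit that $\pi_U$ is the $L^2(U)$-orthogonal projection, so $(u - \pi_U u, v_h)_U = (u-\pi_U u, v_h - \pi_U v_h)_U$, which symmetrises $s_h$ and enables Cauchy--Schwarz.

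For the extended stability (i), I would first bound $\|\nabla u_h\|^2_{L^2(U)}$ by $\|\nabla(u_h - \pi_U u_h)\|^2_{L^2(U)} + \|\nabla \pi_U u_h\|^2_{L^2(U)}$. The first term is controlled by $h_U^{-2}\|u_h-\pi_U u_h\|_{L^2(U)}^2$ via the standard inverse inequality on $U$ (applicable because $u_h - \pi_U u_h$ is a piecewise polynomial on the finitely-many cells of the aggregate and $U$ is shape-regular), which is exactly $\gamma^{-1} s_h|_U$. For the second term, I use finite-dimensional norm equivalence on the polynomial space $\mathcal{P}_p(U)$ (or $\mathcal{Q}_p(U)$) together with scaling: $\|\nabla \pi_U u_h\|_{L^2(U)}^2 \lesssim \|\nabla \pi_U u_h\|_{L^2(T_U)}^2$, and then split $\nabla\pi_U u_h = \nabla u_h - \nabla(u_h - \pi_U u_h)$ on $T_U \subset \Omega$, absorbing the gradient of the fluctuation again by $s_h|_U$. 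Summing over aggregates gives (i). This is the step I expect to be the \emph{main obstacle}, because the polynomial-equivalence argument is what forces the aggregate diameter to be proportional to the root cell diameter; it is also where one sees why bulk \ac{gp} needs the root cell to be \emph{entirely} inside $\Omega$ rather than only partially.

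For continuity (ii), I would use orthogonality to write $s_h(u,v_h) = \gamma \sum_U h_U^{-2}(u-\pi_U u, v_h-\pi_U v_h)_U$, apply Cauchy--Schwarz, and invoke the Bramble--Hilbert estimates $\|u - \pi_U u\|_{L^2(U)} \lesssim h_U^2 |u|_{H^2(U)}$ and $\|v_h - \pi_U v_h\|_{L^2(U)} \lesssim h_U |v_h|_{H^1(U)}$. This yields $s_h(u,v_h) \lesssim \sum_U (h_U |u|_{H^2(U)})\,|v_h|_{H^1(U)}$; a further Cauchy--Schwarz and the definition of $\tnor{\cdot}_{\mathcal{V}(h)}$ close the bound, since both the $H^2$-weighted term and the $H^1$-seminorm are directly controlled by the extended stability norm on $\Omega_h^{\mathrm{act}}$.

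Finally, for weak consistency (iii), the argument is identical to (ii) but with the higher-order approximation estimate for $\pi_U$: since $\mathcal{E}^{n+1}(u) \in H^{n+1}(\Omega_h^{\mathrm{act}})$ and the local polynomial space has order $p \geq n$, Bramble--Hilbert gives $\|\mathcal{E}^{n+1}(u) - \pi_U \mathcal{E}^{n+1}(u)\|_{L^2(U)} \lesssim h_U^{n+1}|\mathcal{E}^{n+1}(u)|_{H^{n+1}(U)}$. Combining this with $\|v_h - \pi_U v_h\|_{L^2(U)} \lesssim h_U \|\nabla v_h\|_{L^2(U)}$, the factor $h_U^{-2}$ in $s_h$ leaves a net power $h_U^{n}$, and a discrete Cauchy--Schwarz plus the continuity of the extension $\mathcal{E}^{n+1}$ yield the desired bound $h^n \|u\|_{H^{n+1}(\Omega)}\tnor{v_h}_{\mathcal{V}(h)}$.
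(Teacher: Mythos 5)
Your outline is sound, but note that the paper does not prove this proposition in-house at all: its ``proof'' is a one-line citation of the original bulk ghost penalty analysis, \cite[Lem.~4.2, Prop.~4.4]{burman2010ghost}. What you have written is essentially a self-contained reconstruction of that cited argument, and you put your finger on the right crux: extended stability hinges on the fixed-degree polynomial norm equivalence $\|\boldsymbol{\nabla}\pi_U u_h\|_{\boldsymbol{L}^2(U)}\lesssim\|\boldsymbol{\nabla}\pi_U u_h\|_{\boldsymbol{L}^2(T_U)}$, which is precisely where one uses that each aggregate $U\in\mathcal{T}_h^{\partial,\mathrm{ag}}$ contains a single root cell $T_U\subset\Omega$ with $h_{T_U}\sim h_U$; together with the $L^2(U)$-orthogonality of $\pi_U$ (which gives $s_h(u_h,u_h)=\gamma\sum_U h_U^{-2}\|u_h-\pi_U u_h\|^2_{L^2(U)}$) and the cell-wise inverse inequality, this closes (i), and the degree-$p$ approximation estimates for $\pi_U$ with $p=m\ge n$, the $h_U^{-2}$ weight and the continuity of $\mathcal{E}^{n+1}$ give (iii) with the correct net power $h^{n}$. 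Summation over aggregates is harmless since the roots of distinct aggregates are disjoint and contained in $\Omega$.

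The one imprecision is in your continuity step (ii): for a generic $u\in\mathcal{V}(h)=\mathcal{V}_h^{\mathrm{act}}+H^2(\Omega_h^{\mathrm{act}})$ the seminorm $|u|_{H^2(U)}$ over a whole aggregate is not available, because the discrete component of $u$ is only piecewise $H^2$, so the Bramble--Hilbert bound $\|u-\pi_U u\|_{L^2(U)}\lesssim h_U^2|u|_{H^2(U)}$ cannot be invoked as stated. The simplest repair is to use only the first-order Poincar\'e bound $\|u-\pi_U u\|_{L^2(U)}\lesssim h_U\|\boldsymbol{\nabla}u\|_{\boldsymbol{L}^2(U)}$ (valid since $\pi_U$ reproduces constants and $u\in H^1(\Omega_h^{\mathrm{act}})$, with a constant uniform over the shape-regular aggregates), which combined with $\|v_h-\pi_U v_h\|_{L^2(U)}\lesssim h_U\|\boldsymbol{\nabla}v_h\|_{\boldsymbol{L}^2(U)}$ and the $h_U^{-2}$ factor already yields $s_h(u,v_h)\lesssim\|\boldsymbol{\nabla}u\|_{\boldsymbol{L}^2(\Omega_h^{\mathrm{act}})}\|\boldsymbol{\nabla}v_h\|_{\boldsymbol{L}^2(\Omega_h^{\mathrm{act}})}\le\tnor{u}_{\mathcal{V}(h)}\tnor{v_h}_{\mathcal{V}(h)}$; alternatively, split $u$ into its discrete and $H^2$ parts and treat them separately. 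With that small fix your argument establishes all three conditions and supplies the details the paper delegates to the literature.
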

\begin{proof}
The analysis of the \ac{bgp} stabilisation, which indirectly proves the conditions in Def.~\ref{def:suitable-ghost-penalty}, can be found in \cite[Lem.~4.2, Prop.~4.4]{burman2010ghost}.
\end{proof}

\subsection{\ac{fgp}}\label{sub:face-gp}

The most popular ghost stabilisation technique relies on the penalisation of inter-element $m$  normal derivatives on the facets in $\mathcal{F}_{h}^{\mathrm{gh},\mathrm{cut}}$, where $m$ stands for the \ac{fe} space order. This is the stabilisation that is being used in the so-called CutFEM framework \cite{burman_cutfem_2015}. Let us consider linear \acp{fe} in the following exposition. Given a face $F \in \mathcal{F}_{h}^{\mathrm{gh},\mathrm{cut}}$ and the two cells $K$ and $K'$ sharing this face, we define $\llbracket \partial_n u \rrbracket \doteq \boldsymbol{n}_K \cdot \boldsymbol{\nabla} u|_K + \boldsymbol{n}_{K'} \cdot \boldsymbol{\nabla} u|_{K'}$. $h_F$ is some average of $h_K$ and $h_{K'}$. We define the \ac{gp} stabilisation term: 
\begin{equation}\label{eq:face-penalty}
  s_{h}(u_{h},v_{h}) = \sum_{F \in \mathcal{F}_h^{\mathrm{gh},\mathrm{cut}}}^{} 
  \left( \gamma h_F \llbracket \partial_n u_h \rrbracket , \llbracket \partial_n v_h \rrbracket  \right)_F.
\end{equation}
The method weakly enforces $\mathcal{C}^{m}$ continuity across cells as soon as one of the cells is cut. This weak constraint of cut cells also provides the desired extended stability described above.

\begin{proposition}\label{prop:face-gp-suitable}
  The expression of $s_h$ in~(\ref{eq:bulk-penalty}) is a suitable \ac{gp} stabilisation that satisfies the conditions (\ref{eq:gp-extended-stability})-(\ref{eq:gp-weak-consistency}) in Def.~\ref{def:suitable-ghost-penalty}.  
  \end{proposition}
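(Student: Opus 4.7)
The plan is to verify the three properties of Definition~\ref{def:suitable-ghost-penalty} one by one, leveraging the standard ``jump across a ghost facet = polynomial extension error'' technique that is the signature of face-based ghost penalty analysis. The fundamental fact is that if $K, K' \in \mathcal{T}_h^{\mathrm{act}}$ share a facet $F$ and $w \in \mathbb{P}_m(K \cup K')$ is a polynomial of degree $m$ on each side but not across, then $w|_K - w|_{K'}$ on $K'$ is itself a polynomial of degree $m$ whose $L^2(K')$-norm is controlled by the jumps of its first $m$ normal derivatives on $F$, scaled by the appropriate powers of $h_F$. For linear elements this reduces to the single first-derivative term displayed in~(\ref{eq:face-penalty}); for higher order $m$ one would tacitly replace $s_h$ by a sum $\sum_{k=1}^{m} \gamma_k h_F^{2k-1} (\jump{\partial_n^k u_h}, \jump{\partial_n^k v_h})_F$, and the same line of argument carries through.

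For extended stability~(\ref{eq:gp-extended-stability}), I would proceed cell by cell: for every $T \in \mathcal{T}_h^{\mathrm{act}}$, the aggregation/geometric hypothesis ensures there is a uniformly bounded chain $T = K_0, K_1, \dots, K_J$ of active cells linked by facets in $\mathcal{F}_h^{\mathrm{gh},\mathrm{cut}}$ with $K_J \in \mathcal{T}_h^{\mathrm{in}}$ (so that $\|\nabla u_h\|_{L^2(K_J)}^2 \lesssim \|\nabla u_h\|_{L^2(\Omega \cap K_J)}^2 \le \|\nabla u_h\|_{L^2(\Omega)}^2$). Along each step I apply the polynomial trace bound above to the restriction of $u_h$ to $K_j \cup K_{j+1}$, obtaining
\begin{equation*}
\|\nabla u_h\|_{L^2(K_j)}^2 \lesssim \|\nabla u_h\|_{L^2(K_{j+1})}^2 + \sum_{F \subset \partial K_j \cap \partial K_{j+1}} h_F \| \jump{\partial_n u_h} \|_{L^2(F)}^2.
\end{equation*}
Telescoping across the chain and summing over $T$ yields~(\ref{eq:gp-extended-stability}) with a constant depending on the maximum chain length; the latter is controlled by the standard cell-aggregation/mesh-regularity assumptions.

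Continuity~(\ref{eq:gp-continuity}) and weak consistency~(\ref{eq:gp-weak-consistency}) are more routine. For continuity, I apply Cauchy--Schwarz on each facet and then use the trace inequality $\|\jump{\partial_n v}\|_{L^2(F)}^2 \lesssim h_F^{-1}\|\nabla v\|_{L^2(\omega_F)}^2 + h_F \|v\|_{H^2(\omega_F)}^2$ (with $\omega_F$ the two-cell patch around $F$); this is sharper than needed for $v_h \in \mathcal{V}_h^{\mathrm{act}}$ (where an inverse inequality collapses the second term) but gives exactly the $\tnor{\cdot}_{\mathcal{V}(h)}$ norm when applied to the rough argument $u \in \mathcal{V}(h)$. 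For weak consistency, observe that the extension $\mathcal{E}^{n+1}(u) \in H^{n+1}(\Omega_h^{\mathrm{act}})$ has zero jumps of normal derivatives across any interior facet, so $s_h(\mathcal{E}^{n+1}(u), v_h) = 0$ when $n \ge m$. When $n < m$ (relevant only if one includes higher-order jump terms), one splits $\mathcal{E}^{n+1}(u) = (\mathcal{E}^{n+1}(u) - I_h \mathcal{E}^{n+1}(u)) + I_h \mathcal{E}^{n+1}(u)$ with $I_h$ a quasi-interpolant; the interpolant contributes zero higher-order jumps on facets, and the residual is bounded by the standard approximation estimate $\|u - I_h u\|_{H^{k}(\omega_F)} \lesssim h^{n+1-k}|u|_{H^{n+1}(\omega_F)}$ combined with the trace bound above.

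The main obstacle, and the only step that is not routine bookkeeping, is the chain/path argument in extended stability. Its validity is not a property of $s_h$ alone but of the interplay between $\mathcal{F}_h^{\mathrm{gh},\mathrm{cut}}$ and the geometry: one needs that every cut cell is reachable from an interior cell by a uniformly bounded path through ghost facets whose length is independent of $h$ and of the cut location. This is precisely the geometric hypothesis that underlies the ghost penalty technique and is where all the genuine work is hidden; once it is in place, the remaining estimates are local and follow from polynomial trace and inverse inequalities. Since detailed versions of all these arguments are already in \cite[Lem.~4.2, Prop.~4.4]{burman2010ghost} and \cite[Sect.~4]{Burman2012} (for linear and higher-order variants respectively), I would simply recall the structure above and invoke those references for the technical details.
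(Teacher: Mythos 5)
Your proposal is sound and, in substance, takes the same route as the paper: for this proposition the paper offers no argument of its own, its proof being a one-line pointer to \cite[Lem.~4.2, Lem.~4.3, Prop.~4.4]{burman2010ghost} for linear elements and to \cite[Sect.~4]{Burman2012} and \cite[Sec.~4]{Hansbo2017} for the general case and elasticity --- precisely the references you fall back on at the end. What you add is a faithful reconstruction of what those references actually prove: the chain/path argument that transfers $H^1$ control from an interior cell to each cut cell through facets of $\mathcal{F}_h^{\mathrm{gh},\mathrm{cut}}$ (and you correctly identify the uniformly bounded path length as the genuine geometric hypothesis), facet trace inequalities giving continuity in the $\tnor{\cdot}_{\mathcal{V}(h)}$ norm, and the vanishing of $\jump{\partial_n \mathcal{E}^{n+1}(u)}$ on interior facets, which for the displayed linear-element form~(\ref{eq:face-penalty}) makes $s_h(\mathcal{E}^{n+1}(u),v_h)=0$ and settles weak consistency outright. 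Note also that the statement's reference to~(\ref{eq:bulk-penalty}) is a typo in the paper (it should be~(\ref{eq:face-penalty})); you read it the intended way.

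One caveat in your aside on the higher-order variant with $n<m$: the claim that the quasi-interpolant ``contributes zero higher-order jumps on facets'' is not correct --- a piecewise polynomial of degree $m$ generally has nonzero jumps of its normal derivatives of orders $2,\dots,m$ across facets. The standard repair is to use $\jump{\partial_n^k I_h w}=\jump{\partial_n^k(I_h w - w)}$ only for those orders $k\le n$ at which the jumps of $w$ itself vanish, and to handle the remaining orders with inverse and trace estimates (or to extend the definition of $s_h$ for rough arguments, as the paper later does for the W-Ag terms via Scott--Zhang). Since the proposition concerns the first-order jump penalty actually displayed, this does not affect the correctness of your proof of the stated result, but the parenthetical as written would not survive scrutiny in the higher-order setting.
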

  \begin{proof}
    The analysis of the \ac{fgp} stabilisation can be found in \cite[Lem.~4.2, Lem.~4.3, Prop.~4.4]{burman2010ghost} for linear elements and \cite[Sect.~4]{Burman2012} and  \cite[Sec.~4]{Hansbo2017} in the general case and applied to linear elasticity problems. 
  \end{proof}

\subsection{The limit $\gamma \to \infty$}\label{sub:gp-limit}
The \ac{gp} method underlying idea is 
to \emph{rigidise} the shape functions on the boundary zone. 
One question that arises is: what happens when the penalty parameter $\gamma \to \infty$? Can we use the previous methods in a \emph{strong} way? Can we build spaces that exactly satisfy the constraints in that limit?

Let us consider the \ac{fgp} method first. In the limit $\gamma \to \infty$, the stabilisation term in that case enforces full $\mathcal{C}^{m}$ continuity of the \ac{fe} solution on $\mathcal{F}_{h}^{\mathrm{gh},\mathrm{cut}}$. The only functions in $\mathcal{V}_{h}^{\mathrm{act}}$ that satisfy this continuity are \emph{global} polynomials in $\Omega_{h}^{\mathrm{cut}}$. As a result, the method is not an accurate method that converges to the exact solution as $h \to 0$. 

Let us look at the \ac{bgp} now. In the limit $\gamma \to \infty$, the solution must be a piecewise $\mathcal{C}^0$ polynomial on the aggregated mesh $\mathcal{T}_h^{\mathrm{ag}}$. The limit constraint is weaker than the one for \ac{fgp}. However, it is still too much to eliminate the locking phenomenon in general. In order to satisfy exactly the penalty term, the aggregate values are polynomial extensions of the root cell polynomials. In general, these extensions do not satisfy $\mathcal{C}^0$ continuity (Aggregates are not tetrahedra or hexahedra anymore, but an aggregation of these polytopes that can have more general shapes, e.g., L-shaped.) As a result, the \ac{bgp} term is not just constraining the values of the ill-posed cell (as one could expect) but the ones of the well-posed cells too. Furthermore, these constraints are not local but can propagate across multiple aggregates, depending on the mesh topology.

The previous observations motivate a slight improvement of \ac{fgp} (CutFEM). Instead of considering the face penalty on all faces in $\mathcal{F}_{h}^{\mathrm{gh},\mathrm{cut}}$, one can only penalise the subset of intra-aggregate faces $\mathcal{F}_h^{\mathrm{gh},\mathrm{ag}}$: 
\begin{equation}\label{eq:weak-face-penalty}
  s_{h}(u_{h},v_{h}) = \sum_{F \in \mathcal{F}_h^{\mathrm{gh},\mathrm{ag}}}^{} 
  \left( \gamma h_F \llbracket \partial_n u_h \rrbracket , \llbracket \partial_n v_h \rrbracket  \right)_F.
\end{equation}
It is easy to check that this variant (\ref{eq:weak-face-penalty}) of the \ac{fgp} method, referred to as \textbf{\ac{agp}}, has the same behaviour in the limit as the bulk one, i.e., it is still affected by locking. The analysis in this case readily follows from Prop.~\ref{prop:face-gp-suitable}. 

\section{Aggregated finite elements}\label{sec:agfem}
The natural question that arises from this discussion is: how can we define a strong version of the \ac{gp} stabilisation that is free of the \emph{locking} described above? The answer to this question was the \ac{agfem} proposed in \cite{Badia2018}. The underlying idea of this method is to define a new \ac{fe} space that can be expressed in terms of an aggregate-wise \emph{discrete extension} operator $\mathcal{E}^{\mathrm{ag}}_h: \mathcal{V}_{h}^{\mathrm{in}} \longrightarrow \mathcal{V}_{h}^{\mathrm{act}}$ that extends \ac{fe} functions from the root cells to the cut cells. This definition solves two problems: the non-locality of the constraints and the constraining of interior cells \acp{dof}. The image of this extension is the \ac{agfe} space $\mathcal{V}_{h}^{\mathrm{ag}} \subset \mathcal{V}_{h}^{\mathrm{act}}$; $\mathcal{V}_{h}^{\mathrm{ag}}$ can be built by adding constraints to $\mathcal{V}_{h}^{\mathrm{act}}$. The new \ac{agfe} space is not affected by the small cut cell problem, since the ill-posed \acp{dof} are constrained by well-posed interior \acp{dof}.

Since $\mathcal{V}_{h}^{\mathrm{act}}$ is a nodal Lagrangian \ac{fe} space, there is a one-to-one map between shape functions, nodes and \acp{dof}. For each node in the mesh, we can define its owner as the lowest dimensional n-face (e.g., vertex, edge, face, cell) that contains it, to create a map $\mathcal{O}_h^{\mathrm{dof}\to \mathrm{nf}}$. We define the set of ill-posed \acp{dof} as the ones owned by cut/external n-faces, i.e., $\mathcal{C}_{h}^{\mathrm{cut}} \setminus \mathcal{C}_{h}^{\mathrm{in}}$. The rationale for this definition is the fact that the shape functions associated to these \acp{dof} are the ones that can have an arbitrarily small support on $\Omega$.

The definition of the discrete extension operator in the \ac{agfem} requires to define an ownership map $\mathcal{O}_h^{\mathrm{nf \to ag}}: \mathcal{C}_{h}^{\mathrm{cut}} \setminus \mathcal{C}_{h}^{\mathrm{in}} \rightarrow \mathcal{T}_{h}^{\partial,\mathrm{ag}}$  from ill-posed cut/external n-faces to aggregates. This mapping is not unique for inter-aggregate \acp{dof} and can be arbitrarily chosen. On the other hand, each aggregate has a unique root cell in $\mathcal{T}_{h}^{\mathrm{in}}$. Composing all these maps, we end up with an ill-posed to root cell map $\mathcal{O}_h^{\mathrm{dof} \to \mathrm{in}}: \mathcal{C}_{h}^{\mathrm{cut}} \setminus \mathcal{C}_{h}^{\mathrm{in}} \rightarrow \mathcal{T}_{h}^{\mathrm{in}}$.

Let $\mathcal{O}_{h}^{\mathrm{nf} \to \mathrm{dof}}$ be the inverse of  $\mathcal{O}_{h}^{\mathrm{dof} \to \mathrm{nf}}$, i.e., the map that returns the \acp{dof} owned by an n-face in $\mathcal{C}_{h}^{\mathrm{act}}$. We also need a \emph{closed} version $\overline{\mathcal{O}}_{h}^{\mathrm{nf} \to \mathrm{dof}}$ of this ownership map, which given an n-face $C$ in $\mathcal{C}_{h}^{\mathrm{act}}$ returns the owned \acp{dof} of all n-faces $C' \in \mathcal{C}_{h}^{\mathrm{act}}$ in the closure of $C$, $C' \subset C$. $\overline{\mathcal{O}}_{h}^{\mathrm{nf} \to \mathrm{dof}}$ is the map that describes the \emph{locality} of \ac{fe} methods. The only \acp{dof} that are \emph{active} in a cell $T \in \mathcal{T}_{h}^{\mathrm{act}}$ are the ones in $\overline{\mathcal{O}}_{h}^{\mathrm{nf} \to \mathrm{dof}}(T)$. Analogously, only the shape functions associated to these \acp{dof} have support on $T$.

We are in position to define the discrete extension operator as follows. An ill-posed \ac{dof} in $\mathcal{C}_{h}^{\mathrm{cut}} \setminus \mathcal{C}_{h}^{\mathrm{in}}$ is computed as a linear combination of the well-posed \acp{dof} in the closure of the root cell that owns it. Using the notation introduced so far, these \acp{dof} are the ones in $\mathcal{O}_{h}^{\mathrm{ipd} \to \mathrm{wpd}} \doteq \overline{\mathcal{O}}_{h}^{\mathrm{nf} \to \mathrm{dof}} \circ \mathcal{O}_{h}^{\mathrm{dof} \to \mathrm{in}}$. In particular, the value of $\sigma^\alpha \in \mathcal{C}_{h}^{\mathrm{cut}} \setminus \mathcal{C}_{h}^{\mathrm{in}}$ is computed as follows
\begin{equation}\label{eq:agfem-constraints}
  \sigma^{\alpha}(\cdot) \ = 
  \sum_{\sigma^{\beta} \in \mathcal{O}_{h}^{\mathrm{ipd}\to \mathrm{wpd}}(\alpha)} 
  \sigma^{\alpha}(\phi^{\beta}) \sigma^{\beta}(\cdot) \ = 
  \sum_{\sigma^{\beta} \in \mathcal{O}_{h}^{\mathrm{ipd}\to \mathrm{wpd}}(\alpha)} 
  \phi^{\beta}(\boldsymbol{x}^{\alpha}) \sigma^{\beta}(\cdot). 
\end{equation}
We can readily check that this expression extends the well-posed \ac{dof} values on interior cells to the ill-posed \ac{dof} values that only belong to cut cells. The application of~\eqref{eq:agfem-constraints} to a \ac{fe} function in $\mathcal{V}_{h}^{\mathrm{in}}$ provides the sought-after discrete extension operator $\mathcal{E}_{h}^{\mathrm{ag}}$ and the \ac{agfe} space $\mathcal{V}_{h}^{\mathrm{ag}}$.\footnote{One could consider alternative expressions for the constraints. E.g., one could skip the aggregate owner map $\mathcal{O}_h^{\mathrm{dof} \to \mathrm{ag}}$ and simply compute an average of the constraints from all aggregates that contain the ill-posed \ac{dof}. However, this choice would have a very negative impact on the sparsity pattern of the resulting linear system and was originally discarded.}

The implementation of \ac{agfem} simply requires the imposition of the constraints in~\eqref{eq:agfem-constraints}. These constraints are cell-local (much simpler than the ones in $h$-adaptive mesh refinement). On the other hand, the method does not require any modification of the forms in~\eqref{eq:poisson-weak}-(\ref{eq:elasticity-weak}) . Instead, it is the \ac{fe} space the one that changes. The method reads: find $u_h \in \mathcal{V}_{h}^{\mathrm{ag}}$ such that $a(u_h,v_h) = b(v_h)$ for any $v_h \in \mathcal{V}_{h}^{\mathrm{ag}}$. The key properties of the \ac{agfem} are:
\begin{equation}\label{eq:agfem-extended-stability}
  \|\boldsymbol{\nabla}u_h\|^{2}_{\boldsymbol{L}^2(\Omega_{h}^{\mathrm{act}})} \lesssim \|\boldsymbol{\nabla}u_h\|^{2}_{\boldsymbol{L}^2(\Omega)}, \qquad   
  \|u_h\|^{2}_{{L}^2(\Omega_{h}^{\mathrm{act}})} \lesssim \| u_h\|^{2}_{{L}^2(\Omega)}, \qquad \forall u_h \in \mathcal{V}_h^{\mathrm{ag}},
\end{equation}
It is also important to note that, by construction, $\mathcal{V}_{h}^{\mathrm{ag}}(U)$ for $U \in \mathcal{T}_{h}^{\mathrm{ag}}$ is a subspace of $\mathcal{P}_p(U)$ (for simplices) of $\mathcal{Q}_p(U)$ for hexahedra, solving the issues in the $\gamma \to \infty$ limit of \ac{fgp} and \ac{bgp}. As a result, the method preserves the convergence properties of $\mathcal{V}_{h}^{\mathrm{act}}$. We summarise these observations in the following definition.

\begin{definition}\label{def:discrete-extension-operator}
  Let $0 \leq s \leq n \leq m$,  where $m$ is the order of $\mathcal{V}_{h}^{\mathrm{in}}$. A suitable discrete extension operator $\mathcal{E}_{h}^{\mathrm{ag}}: \mathcal{V}_{h}^{\mathrm{in}} \rightarrow  \subset \mathcal{V}_{h}^{\mathrm{act}}$ must satisfy the following properties: 
  \begin{itemize}
         \item[(i)] Continuity: 
    \begin{equation}\label{eq:discrete-extension-continuity}
		{\| \mathcal{E}_{h}^{\mathrm{ag}}(v_h) \|_{L^2(\Omega_{h}^{\mathrm{act}})} \lesssim \|v_h\|_{L^2(\Omega_{h}^{\mathrm{in}})},} \qquad \| \boldsymbol{\nabla}\mathcal{E}_{h}^{\mathrm{ag}}(v_h) \|_{\boldsymbol{L}^2(\Omega_{h}^{\mathrm{act}})} \lesssim \|\boldsymbol{\nabla}  v_h\|_{\boldsymbol{L}^2(\Omega_{h}^{\mathrm{in}})}, \qquad \forall v_h \in \mathcal{V}_{h}^{\mathrm{in}}.
    \end{equation}
    \item[(ii)] Approximability:
\begin{equation}\label{eq:discrete-extension-approximability}
  \underset{v_h \in \mathcal{V}_{h}^{\mathrm{in}}}{\mathrm{inf}} \| u - \mathcal{E}_{h}^{\mathrm{ag}}(v_h) \|_{H^s(\Omega)} \lesssim h^{n-s+1} \| u \|_{H^{n+1}(\Omega)}, \qquad \forall u  \in H^{n+1}(\Omega).
\end{equation}
\end{itemize}
The image $\mathcal{V}_{h}^{\mathrm{ag}} \doteq \mathrm{Im}(\mathcal{E}_{h}^{\mathrm{ag}}) \subset \mathcal{V}_{h}^{\mathrm{act}}$ is a suitable \ac{agfe} space. 
\end{definition}

\begin{proposition}\label{prop:well-posedness-deo}
  Let $\mathcal{E}_{h}^{\mathrm{ag}}$ satisfy Def.~\ref{def:discrete-extension-operator}. Let $\mathcal{V}^{\mathrm{ag}}(h) \doteq H^2(\Omega) + \mathcal{V}_{h}^{\mathrm{ag}}$, endowed
  with the norm
  \begin{equation}\label{eq:norm-agfem}
    {\tnor{v}^{2}_{\mathcal{V}^{\mathrm{ag}}(h)} \doteq \|\boldsymbol{\nabla}v\|^{2}_{\boldsymbol{L}^2(\Omega)} +  \| \tau^{\frac{1}{2}} v \|_{L^2(\Gamma_{\mathrm{D}})}^2 
    + \sum_{T \in \mathcal{T}_{h}^{\mathrm{act}}}^{} h_T^{2} \| v \|_{H^2(T)}^{2}, \qquad \forall v \in \mathcal{V}^{\mathrm{ag}}(h).}
    \end{equation}
  It holds:
  \begin{align}\label{eq:deo-coercive-continuous}
    a_h(u_h,u_h) \gtrsim \| u_h \|^2_{\mathcal{V}^{\mathrm{ag}}(h)}, \qquad
    a_h(u,v_h) \lesssim \|u\|_{\mathcal{V}^{\mathrm{ag}}(h)} \|v_h\|_{\mathcal{V}^{\mathrm{ag}}(h)},
    \end{align} 
    for any $u_h, v_h \in  \mathcal{V}_{h}^{\mathrm{ag}}$, $u \in \mathcal{V}^{\mathrm{ag}}(h)$. Thus, there is a unique 
  \begin{equation}\label{eq:deo-weak-poisson}
    u_h \in \mathcal{V}_h^{\rm ag} \ : \ a_h(u_h,v_h) = b(v_h), \quad \forall v_h \in \mathcal{V}_{h}^{\mathrm{ag}}.
  \end{equation}
  Furthermore, the condition number $\kappa$ of the resulting linear system when using a standard Lagrangian basis for $\mathcal{V}_{h}^{\mathrm{in}}$  holds $\kappa \lesssim h^{-2}$. 
\end{proposition}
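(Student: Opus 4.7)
The plan is to mirror the structure of the proof of Proposition~\ref{prop:well-posedness-gp}, with the key change that the role played there by the ghost penalty term $s_h$ is now played by the extended stability property~\eqref{eq:agfem-extended-stability} that is \emph{built into} the aggregated space $\mathcal{V}_h^{\mathrm{ag}}$. More precisely, for a suitable discrete extension operator, the continuity property~\eqref{eq:discrete-extension-continuity} of Def.~\ref{def:discrete-extension-operator} lets us transfer control of $\|\boldsymbol{\nabla} u_h\|_{\boldsymbol{L}^2(\Omega_h^{\mathrm{act}})}$ to control of $\|\boldsymbol{\nabla} u_h\|_{\boldsymbol{L}^2(\Omega_h^{\mathrm{in}})}$ (since any $u_h \in \mathcal{V}_h^{\mathrm{ag}}$ is $\mathcal{E}_h^{\mathrm{ag}}(v_h)$ for some $v_h \in \mathcal{V}_h^{\mathrm{in}}$), and then to control of $\|\boldsymbol{\nabla} u_h\|_{\boldsymbol{L}^2(\Omega)}$ since the interior cells are contained in $\Omega$. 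This yields~\eqref{eq:agfem-extended-stability} as a free consequence of the construction, removing the need for any added stabilisation form.

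For the coercivity estimate in~\eqref{eq:deo-coercive-continuous}, I would expand $a_h(u_h,u_h)$ as written in~\eqref{eq:poisson-weak}, keeping the bulk term $\|\boldsymbol{\nabla} u_h\|^2_{\boldsymbol{L}^2(\Omega)}$ and the symmetric Nitsche penalty $\|\tau^{1/2} u_h\|^2_{L^2(\Gamma_{\mathrm{D}})}$ as the two positive contributions, and controlling the cross term $\int_{\Gamma_{\mathrm{D}}} u_h(\boldsymbol{n}\cdot\boldsymbol{\nabla} u_h)\,\mathrm{d}\Gamma$ via Cauchy--Schwarz and Young. The inverse trace inequality~\eqref{eq:inverse-trace-inequality} is applicable here because $u_h\in\mathcal{V}_h^{\mathrm{act}}$ is a polynomial on each background cell, giving $\|\boldsymbol{n}\cdot\boldsymbol{\nabla} u_h\|^2_{L^2(\Gamma_{\mathrm{D}}\cap T)} \lesssim h_T^{-1}\|\boldsymbol{\nabla} u_h\|^2_{\boldsymbol{L}^2(T)}$ over the \emph{full} cell $T$. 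The summation over $T\in\mathcal{T}_h^{\mathrm{cut}}$ produces $\|\boldsymbol{\nabla} u_h\|^2_{\boldsymbol{L}^2(\Omega_h^{\mathrm{act}})}$, which is then absorbed by the bulk term using~\eqref{eq:agfem-extended-stability}. Choosing $\tau$ as in a body-fitted setting (i.e. $\tau_T \doteq \beta h_T^{-1}$ with the same $\beta$ as in the conforming case, independent of cut location) closes the argument and additionally controls the $h_T^2 \|v\|^2_{H^2(T)}$ contributions via a further polynomial inverse inequality on the background cells. Continuity in~\eqref{eq:deo-coercive-continuous} is then routine: Cauchy--Schwarz on the bulk term, the continuous trace inequality~\eqref{eq:trace-inequality} for the $H^2$-piece of $u\in\mathcal{V}^{\mathrm{ag}}(h)$, and the inverse trace estimate for the discrete factor $v_h$, all bounded by the two factors in the $\mathcal{V}^{\mathrm{ag}}(h)$-norm.

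Once coercivity and continuity hold, Lax--Milgram on the (finite-dimensional) subspace $\mathcal{V}_h^{\mathrm{ag}}$ yields existence, uniqueness, and stability of the solution to~\eqref{eq:deo-weak-poisson}. For the conditioning estimate, I would follow the template of the last step of the proof of Proposition~\ref{prop:well-posedness-gp}. A Poincaré-type inequality on $\Omega$ (valid because Dirichlet traces are controlled through $\|\tau^{1/2} v\|_{L^2(\Gamma_{\mathrm{D}})}$) gives $\|u_h\|_{L^2(\Omega)} \lesssim \|u_h\|_{\mathcal{V}^{\mathrm{ag}}(h)}$, and the second inequality in~\eqref{eq:agfem-extended-stability} upgrades this to $\|u_h\|_{L^2(\Omega_h^{\mathrm{act}})} \lesssim \|u_h\|_{\mathcal{V}^{\mathrm{ag}}(h)}$. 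The reverse bound $\|u_h\|_{\mathcal{V}^{\mathrm{ag}}(h)} \lesssim h^{-1}\|u_h\|_{L^2(\Omega_h^{\mathrm{act}})}$ follows from the standard inverse inequality on the background mesh together with~\eqref{eq:inverse-trace-inequality}. The mass-matrix spectral equivalence~\eqref{eq:mass-bound} on $\mathcal{T}_h^{\mathrm{act}}$ then yields $\kappa \lesssim h^{-2}$, exactly as in \cite[Lem.~4.3]{burman2010ghost}.

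The main obstacle I anticipate is the rigorous justification that~\eqref{eq:agfem-extended-stability} follows from the continuity axiom~\eqref{eq:discrete-extension-continuity} of Def.~\ref{def:discrete-extension-operator}. For a generic $u_h=\mathcal{E}_h^{\mathrm{ag}}(v_h) \in \mathcal{V}_h^{\mathrm{ag}}$, one has $u_h|_{\Omega_h^{\mathrm{in}}}=v_h$, so $\|v_h\|_{L^2(\Omega_h^{\mathrm{in}})} \leq \|u_h\|_{L^2(\Omega_h^{\mathrm{in}})} \leq \|u_h\|_{L^2(\Omega)}$ and analogously for gradients, since $\Omega_h^{\mathrm{in}}\subset\Omega$. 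Composing with~\eqref{eq:discrete-extension-continuity} closes the chain. This step is where the abstractness of Def.~\ref{def:discrete-extension-operator} really pays off: the whole proof goes through for any operator satisfying those axioms, so the analysis covers both the original \ac{agfem} of~\cite{Badia2018} and any variant whose constraint weights preserve the $L^2$/$H^1$-stability of the extension.
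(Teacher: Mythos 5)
Your proposal is correct and follows essentially the same route as the paper, whose proof is simply a terse pointer: bound the Nitsche terms exactly as in Prop.~\ref{prop:well-posedness-gp}, with the extended stability now supplied by the continuity axiom~\eqref{eq:discrete-extension-continuity} of the extension operator (your chain $u_h|_{\Omega_h^{\mathrm{in}}}=v_h$, $\Omega_h^{\mathrm{in}}\subset\Omega$ is precisely how~\eqref{eq:agfem-extended-stability} is obtained), and repeat the condition-number argument of Prop.~\ref{prop:well-posedness-gp} following~\cite{Badia2018}. The only cosmetic imprecision is invoking~\eqref{eq:mass-bound} directly on $\mathcal{T}_h^{\mathrm{act}}$ for the aggregated basis: for the basis indexed by $\mathcal{V}_h^{\mathrm{in}}$ the lower spectral bound comes from the interior cells (where aggregated and standard shape functions coincide) and the upper bound from~\eqref{eq:discrete-extension-continuity}, which is exactly the argument in the cited reference.
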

\begin{proof}
  In order to prove (\ref{eq:deo-coercive-continuous}), the Nitsche terms can readily be bounded as in Prop.~\ref{prop:well-posedness-gp} using (\ref{eq:discrete-extension-continuity}). The condition number proof follows the same lines as the one for the \ac{gp} in Prop.~\ref{prop:well-posedness-gp} (see \cite{Badia2018}).
\end{proof}
  
  \begin{proposition}\label{prop:error-estimates-deo}
  Let $m$ be the order of $\mathcal{V}_{h}^{\mathrm{act}}$. If $u \in H^{n+1}(\Omega)$, $m \geq n \geq 1$, is the solution of~(\ref{eq:poisson-weak}) or (\ref{eq:elasticity-weak})  and $u_h \in \mathcal{V}^{\mathrm{ag}}_h$ is the
	solution of \eqref{eq:deo-weak-poisson},  then:
	\begin{equation}
		\|u-u_h\|_{\mathcal{V}(h)} \lesssim h^{n} 
		| u |_{H^{n+1}(\Omega)}, \qquad 
		\|{u-u_h}\|_{L^2(\Omega)} \lesssim h^{n+1} 
		| u |_{H^{n+1}(\Omega)}.
	\end{equation}
\end{proposition}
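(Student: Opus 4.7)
The plan is to follow a standard Céa-plus-Aubin--Nitsche route, leveraging the abstract well-posedness from Prop.~\ref{prop:well-posedness-deo} together with the approximability property from Def.~\ref{def:discrete-extension-operator}(ii). First I would record Galerkin orthogonality: because the bilinear form $a_h$ in~\eqref{eq:poisson-weak}--\eqref{eq:elasticity-weak} is weakly consistent (the Nitsche flux and boundary terms vanish for the exact solution $u$ satisfying~(\ref{eq:poisson-strong})/(\ref{eq:elasticity-strong})), we have $a_h(u,v_h)=b_h(v_h)$ for every $v_h\in\mathcal{V}_h^{\mathrm{ag}}\subset\mathcal{V}_h^{\mathrm{act}}$, hence $a_h(u-u_h,v_h)=0$ for all $v_h\in\mathcal{V}_h^{\mathrm{ag}}$. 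Here some care is needed because $a_h$ is defined on $\mathcal{V}(h)$ and $u\in H^{n+1}(\Omega)$ with $n\geq 1$, so the boundary traces in Nitsche's formulation are well defined.

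For the energy estimate, I would pick an arbitrary $w_h\in\mathcal{V}_h^{\mathrm{ag}}$ and apply coercivity and continuity from~(\ref{eq:deo-coercive-continuous}) to $u_h-w_h\in\mathcal{V}_h^{\mathrm{ag}}$:
\begin{equation*}
\|u_h-w_h\|^2_{\mathcal{V}^{\mathrm{ag}}(h)}\lesssim a_h(u_h-w_h,u_h-w_h)=a_h(u-w_h,u_h-w_h)\lesssim \|u-w_h\|_{\mathcal{V}^{\mathrm{ag}}(h)}\|u_h-w_h\|_{\mathcal{V}^{\mathrm{ag}}(h)},
\end{equation*}
where I used Galerkin orthogonality. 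A triangle inequality then yields $\|u-u_h\|_{\mathcal{V}^{\mathrm{ag}}(h)}\lesssim \inf_{w_h\in\mathcal{V}_h^{\mathrm{ag}}}\|u-w_h\|_{\mathcal{V}^{\mathrm{ag}}(h)}$. I would then take $w_h=\mathcal{E}_h^{\mathrm{ag}}(v_h)$ for the quasi-optimal $v_h\in\mathcal{V}_h^{\mathrm{in}}$ given by (\ref{eq:discrete-extension-approximability}) with $s=1$, yielding the $H^1$ contribution $h^n|u|_{H^{n+1}(\Omega)}$. The remaining terms in $\|\cdot\|_{\mathcal{V}^{\mathrm{ag}}(h)}$ — the Nitsche boundary term $\|\tau^{1/2}\cdot\|_{L^2(\Gamma_D)}^2$ and the broken $H^2$ term $\sum_{T}h_T^2\|\cdot\|_{H^2(T)}^2$ — are controlled by combining (\ref{eq:discrete-extension-approximability}) with the trace inequality (\ref{eq:trace-inequality}) applied to $u-\mathcal{E}_h^{\mathrm{ag}}(v_h)$ and a standard inverse estimate on the discrete part; this is where the assumed regularity $u\in H^{n+1}(\Omega)$ and the extension in the background cells plays a role, analogously to Prop.~\ref{eq:error-estimates-gp}.

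For the $L^2$ bound I would deploy the Aubin--Nitsche duality trick. Let $z\in H^1(\Omega)$ solve the dual problem with data $u-u_h$ and homogeneous boundary conditions; under the standing regularity assumptions on $\Omega$, elliptic regularity gives $\|z\|_{H^2(\Omega)}\lesssim \|u-u_h\|_{L^2(\Omega)}$. Using Galerkin orthogonality once more,
\begin{equation*}
\|u-u_h\|_{L^2(\Omega)}^2=a_h(u-u_h,z)=a_h\bigl(u-u_h,z-\mathcal{E}_h^{\mathrm{ag}}(z_h)\bigr)\lesssim \|u-u_h\|_{\mathcal{V}^{\mathrm{ag}}(h)}\,\|z-\mathcal{E}_h^{\mathrm{ag}}(z_h)\|_{\mathcal{V}^{\mathrm{ag}}(h)},
\end{equation*}
and applying (\ref{eq:discrete-extension-approximability}) to $z$ with $n=1$, $s=1$ gives an extra factor $h\|z\|_{H^2(\Omega)}\lesssim h\|u-u_h\|_{L^2(\Omega)}$. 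Combined with the energy estimate just proved, dividing by $\|u-u_h\|_{L^2(\Omega)}$ yields the claimed $h^{n+1}|u|_{H^{n+1}(\Omega)}$ bound.

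The principal technical obstacle is not the Céa argument itself but checking that the approximation error from the discrete extension operator controls \emph{all} terms in $\|\cdot\|_{\mathcal{V}^{\mathrm{ag}}(h)}$ — in particular the boundary Nitsche term and the element-wise $H^2$ term over the full active mesh $\mathcal{T}_h^{\mathrm{act}}$, not just over $\Omega$. This hinges on the fact that $\mathcal{V}_h^{\mathrm{ag}}|_U\subset \mathcal{P}_p(U)$ (or $\mathcal{Q}_p(U)$) on each aggregate $U\in\mathcal{T}_h^{\mathrm{ag}}$, so standard polynomial interpolation bounds on shape-regular aggregates transfer error estimates on $\Omega$ to error estimates on the extended region, independently of the cut location. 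Once this uniform polynomial approximation property on aggregates is established (which is precisely the content of Def.~\ref{def:discrete-extension-operator}(ii), already verified in~\cite{Badia2018}), the rest of the argument is a verbatim repetition of the classical Nitsche-based analysis.
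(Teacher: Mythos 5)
Your argument is exactly the ``standard'' convergence analysis the paper invokes in its one-line proof: C\'ea-type quasi-optimality from the coercivity/continuity of Prop.~\ref{prop:well-posedness-deo} combined with the approximability property~(\ref{eq:discrete-extension-approximability}), plus Aubin--Nitsche duality for the $L^2$ rate. The details you fill in (Galerkin orthogonality via Nitsche consistency, control of the boundary and broken $H^2$ terms through trace/inverse estimates, and elliptic regularity for the dual problem) are consistent with the paper's intent, so this is essentially the same proof, just written out.
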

\begin{proof}
The convergence analysis is standard and relies on the approximability property~(\ref{eq:discrete-extension-approximability}).     
\end{proof}

\begin{proposition}\label{prop:aggregation-suitable}
  The discrete extension operator $\mathcal{E}_{h}^{\mathrm{ag}}$ constructed with expression~(\ref{eq:agfem-constraints}) satisfies the conditions in Def.~\ref{def:discrete-extension-operator}.       
\end{proposition}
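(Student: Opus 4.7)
The plan is to reduce both conditions of Def.~\ref{def:discrete-extension-operator} to local estimates on individual aggregates, exploiting the fact that the constraint~(\ref{eq:agfem-constraints}) is precisely a polynomial extension. Concretely, for any $v_h\in\mathcal{V}_h^{\mathrm{in}}$ and any aggregate $U\in\mathcal{T}_h^{\partial,\mathrm{ag}}$ with root cell $T\in\mathcal{T}_h^{\mathrm{in}}$, the right-hand side $\sum_{\beta}\phi^\beta(\boldsymbol{x}^\alpha)\sigma^\beta(v_h)$ in~(\ref{eq:agfem-constraints}) equals the value at $\boldsymbol{x}^\alpha$ of the unique polynomial extension of $v_h|_T$ to the aggregate. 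Since nodal interpolation reproduces polynomials of degree $\leq m$, $\mathcal{E}_h^{\mathrm{ag}}(v_h)|_U$ coincides with that single polynomial on the whole aggregate; on interior cells forming their own trivial aggregate, $\mathcal{E}_h^{\mathrm{ag}}$ is the identity. This polynomial-extension interpretation is the structural backbone of the argument.

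For property (i), since $\mathcal{E}_h^{\mathrm{ag}}$ is the identity on $\Omega_h^{\mathrm{in}}$, it suffices to control, for each $U\in\mathcal{T}_h^{\partial,\mathrm{ag}}$ with root $T$, the contribution from $U\setminus T$. The function $\mathcal{E}_h^{\mathrm{ag}}(v_h)|_U$ lies in a fixed finite-dimensional polynomial space, and $T\subset U$ has measure comparable to $U$ thanks to shape regularity and the proportionality $\mathrm{diam}(U)\lesssim\mathrm{diam}(T)$ assumed in Sect.~\ref{sec:cell-aggregation}. Rescaling to unit diameter and using the equivalence of any two norms on a finite-dimensional space yields $\|\mathcal{E}_h^{\mathrm{ag}}(v_h)\|_{L^2(U)}\lesssim\|v_h\|_{L^2(T)}$ and, by a Markov-type inequality on polynomials, $\|\boldsymbol{\nabla}\mathcal{E}_h^{\mathrm{ag}}(v_h)\|_{\boldsymbol{L}^2(U)}\lesssim\|\boldsymbol{\nabla}v_h\|_{\boldsymbol{L}^2(T)}$. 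Summing over all aggregates and using that each interior cell is the root of at most one aggregate delivers~(\ref{eq:discrete-extension-continuity}).

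For property (ii), I would fix $u\in H^{n+1}(\Omega)$, pick a Stein continuous extension $\tilde u\in H^{n+1}(\Omega_h^{\mathrm{act}})$ with $\|\tilde u\|_{H^{n+1}(\Omega_h^{\mathrm{act}})}\lesssim\|u\|_{H^{n+1}(\Omega)}$, and test the infimum in~(\ref{eq:discrete-extension-approximability}) with $v_h$ equal to the standard nodal Lagrange interpolant of $\tilde u$ onto $\mathcal{V}_h^{\mathrm{in}}$. On interior aggregates the bound is the classical interpolation estimate. On a boundary aggregate $U$ with root $T$, choose an averaged Taylor polynomial $p_U$ of $\tilde u$ of degree $n$, so that Bramble-Hilbert on the shape-regular patch $U$ gives $\|\tilde u-p_U\|_{H^s(U)}\lesssim h^{n+1-s}|\tilde u|_{H^{n+1}(U)}$. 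Since $p_U$ is reproduced by the polynomial-extension construction, I would write $\mathcal{E}_h^{\mathrm{ag}}(v_h)-\tilde u = \mathcal{E}_h^{\mathrm{ag}}(v_h-p_U|_T\,\text{interpolated})+(p_U-\tilde u)$, apply the continuity bound from the previous paragraph to the first term together with the standard interpolation estimate on $T$, and Bramble-Hilbert to the second. Summing over aggregates and interior cells, using the finite overlap induced by shape regularity, yields the global bound~(\ref{eq:discrete-extension-approximability}) (and restricting to $\Omega\subset\Omega_h^{\mathrm{act}}$ only makes the norm smaller).

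The main obstacle is the uniformity, with respect to $h$ and the cut location, of the polynomial norm-equivalences $\|p\|_{L^2(U)}\lesssim\|p\|_{L^2(T)}$ and $\|\boldsymbol{\nabla}p\|_{\boldsymbol{L}^2(U)}\lesssim\|\boldsymbol{\nabla}p\|_{\boldsymbol{L}^2(T)}$ for $p\in\mathcal{P}_m(U)$ (or $\mathcal{Q}_m(U)$). This is precisely the point at which the aggregation hypotheses from Sect.~\ref{sec:cell-aggregation}, namely that aggregate diameters are proportional to root-cell diameters and that aggregates are shape regular, must be invoked. A scaling argument to unit diameter, combined either with compactness over the finite family of reference aggregate shapes up to rigid motion, or with a direct geometric argument using that $T$ contains a ball of radius comparable to $\mathrm{diam}(U)$, delivers the required bound. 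The full calculation, including verification of the interpolation estimates on aggregated meshes, is carried out in detail in~\cite{Badia2018}.
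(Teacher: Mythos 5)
Your overall strategy---localise to aggregates, invoke shape regularity and the proportionality of aggregate size to root-cell size, and defer the detailed verification to \cite{Badia2018}---is the same route the paper takes (its proof is essentially a pointer to Corollary~5.3 and Lemma~5.10 of \cite{Badia2018}, with the key idea that the constrained \ac{dof} values in \eqref{eq:agfem-constraints} are bounded by interior \ac{dof} values). However, the step you call the structural backbone is not correct in general: $\mathcal{E}_h^{\mathrm{ag}}(v_h)|_U$ is \emph{not} the single polynomial extension of $v_h|_T$ on the whole aggregate $U$. A cut cell $K \subset U$ typically carries (a) well-posed \acp{dof} on n-faces shared with interior cells other than the root $T$, which keep the values of $v_h$ coming from those cells, and (b) ill-posed \acp{dof} on inter-aggregate n-faces whose owner under $\mathcal{O}_h^{\mathrm{nf \to ag}}$ may be a neighbouring aggregate, so their values are evaluations of a \emph{different} root polynomial. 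Hence $\mathcal{E}_h^{\mathrm{ag}}(v_h)|_U$ is in general a Lagrange interpolant of nodal values drawn from several interior cells, not an element of $\mathcal{P}_m(U)$. (The paper's informal remark that $\mathcal{V}_h^{\mathrm{ag}}(U)$ is contained in $\mathcal{P}_p(U)$ invites your reading, but it only holds when every cut-cell \ac{dof} of $U$ is ill-posed and owned by $U$.)

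This affects both places where the claim is used. For (i), the norm equivalence $\|p\|_{L^2(U)} \lesssim \|p\|_{L^2(T)}$ for polynomials cannot be applied to $\mathcal{E}_h^{\mathrm{ag}}(v_h)|_U$; the argument that actually works (and is the one behind \cite[Cor.~5.3]{Badia2018}) is \ac{dof}-wise: the proportionality assumption gives $|\phi^\beta(\boldsymbol{x}^\alpha)| \lesssim 1$, so every constrained nodal value is a bounded combination of nodal values of $v_h$ on nearby interior cells, and cell-wise scaling yields the $L^2$ bound; the gradient bound does not follow from a Markov-type inequality on $U$, but from the fact that the extension reproduces constants (since $\sum_\beta \phi^\beta(\boldsymbol{x}^\alpha) = 1$), so one can subtract a constant and control nodal oscillations by $\|\boldsymbol{\nabla} v_h\|$ on an interior patch, or equivalently apply the same continuity argument to the discontinuous FE space of gradients, as the paper indicates. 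For (ii), the statement that $p_U$ is reproduced holds for a polynomial whose interpolant feeds \emph{all} the \acp{dof} influencing $U$ (the root of $U$, roots of neighbouring aggregates, and adjacent interior cells), so the averaged Taylor polynomial must be taken on that slightly enlarged patch rather than on $U$ alone; with that modification your Bramble--Hilbert decomposition and the finite-overlap summation go through. With these repairs your sketch coincides with the cited proof.
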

\begin{proof}
  The continuity in ~(\ref{eq:discrete-extension-continuity}) relies on the fact that the constrained \ac{dof} values in~(\ref{eq:agfem-constraints}) can be bounded by interior \acp{dof} under the assumption that the aggregate sizes are proportional to the root cell size. The complete proof can be found in \cite[Corollary 5.3]{Badia2018} for a general \ac{agfe} space, which can be either $\mathcal{V}_{h}^{\mathrm{in}}$ or the discontinuous FE space of its gradients. The proof of the approximability property~(\ref{eq:discrete-extension-approximability}) can be found in \cite[Lemma 5.10]{Badia2018}. 
\end{proof}

\subsection{Ghost penalty with discrete extension}\label{sub:weak-agfem}

In this section, we propose a novel scheme, which combines the discrete extension operator defined above for the \ac{agfem} and the \ac{gp} stabilisation ideas. This development is motivated by the \emph{locking} phenomenon of \ac{gp} methods as $\gamma \to \infty$. The idea is to penalise the distance between the solution in the (unconstrained) $\mathcal{V}_{h}^{\mathrm{act}}$ space and the one in $\mathcal{V}_{h}^{\mathrm{ag}}$. 

In Section~\ref{sec:agfem}, we have defined the discrete extension operator $\mathcal{E}^{\mathrm{ag}}_h: \mathcal{V}_{h}^{\mathrm{in}} \longrightarrow \mathcal{V}_{h}^{\mathrm{ag}}$. Let us define the projector $\mathcal{P}^{\mathrm{ag}}_h: \mathcal{V}_{h}^{\mathrm{act}} \longrightarrow \mathcal{V}_{h}^{\mathrm{ag}}$ that takes $v_h \in \mathcal{V}_{h}^{\mathrm{act}}$, computes its restriction to the interior $v_h|_{\Omega_{h}^{\mathrm{in}}} \in \mathcal{V}_{h}^{\mathrm{in}}$ and applies the extension $\mathcal{E}^{\mathrm{ag}}_h$, i.e., $\mathcal{P}_{h}^{\mathrm{ag}}(v_h) \doteq \mathcal{E}^{\mathrm{ag}}_h \left(  v_h|_{\Omega_{h}^{\mathrm{in}}} \right)$. Next, we define the following stabilisation term:
\begin{equation}\label{eq:weak-l2-agfem}
  s_h(u_h,v_h) \doteq \sum_{U \in \mathcal{T}_{h}^{\mathrm{ag}}}^{} \left( \gamma h_U^{-2} \left(   u_h - \mathcal{P}_{h}^{\mathrm{ag}}(u_h) \right), v_h - \mathcal{P}_{h}^{\mathrm{ag}}(v_h) \right)_{U^*},
\end{equation}
where $U^*$ can be chosen to be $U$ or $U \setminus \Omega$ (i.e., only adding stability in the exterior part). We refer to method (\ref{eq:weak-l2-agfem}) as \textbf{\ac{wagL2}}. We note that this term can be computed cell-wise, in the spirit of so-called local projection stabilisation methods \cite{Becker2001Dec,Badia2012Nov}. On the other hand, the term vanishes at the root cell of the aggregate by construction, i.e., it is only active on $\Omega_{h}^\mathrm{cut}$.  Alternatively, one could consider an $L^2$-projection at each aggregate $U$, as in the \ac{bgp}. It is also obvious to check that this method converges to the \ac{agfem} in the limit $\gamma \to \infty$. 
Alternatively, we can consider a stabilisation that relies on the $H^1$ product:
\begin{equation}\label{eq:weak-h1-agfem}
  s_h(u_h,v_h) \doteq \sum_{U \in \mathcal{T}_{h}^{\mathrm{ag}}}^{} \left( \gamma \boldsymbol{\nabla}\left(   u_h - \mathcal{P}_{h}^{\mathrm{ag}}(u_h) \right), \boldsymbol{\nabla} \left(  v_h - \mathcal{P}_{h}^{\mathrm{ag}}(v_h)  \right)\right)_{U^*},
\end{equation}
that is, a \textbf{\ac{wagH1}}, where again $U^*$ can be $U$ or $U \setminus \Omega$. 
Its implementation is as easy as the previous one, but does not require to compute a characteristic mesh size $h_U$.

\begin{proposition}\label{prop:weak-l2h1-gp-suitable}
  The expressions of $s_h$ in~(\ref{eq:weak-l2-agfem}) and (\ref{eq:weak-h1-agfem}) are suitable \ac{gp} stabilisation that satisfies the conditions (\ref{eq:gp-extended-stability})-(\ref{eq:gp-weak-consistency}) in Def.~\ref{def:suitable-ghost-penalty}.  
\end{proposition}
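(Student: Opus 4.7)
The plan is to verify the three axioms (i)--(iii) of Def.~\ref{def:suitable-ghost-penalty} for both variants in parallel, leveraging the fact that $\mathcal{P}_{h}^{\mathrm{ag}}$ is a projection onto $\mathcal{V}_{h}^{\mathrm{ag}}$ which coincides with the identity on $\Omega_{h}^{\mathrm{in}}$, since $\mathcal{E}_{h}^{\mathrm{ag}}$ is an extension. It follows that $u_h - \mathcal{P}_{h}^{\mathrm{ag}}(u_h) \equiv 0$ on every root cell, so $s_h$ localises automatically onto cut cells regardless of whether $U^{\ast} = U$ or $U^{\ast} = U\setminus\Omega$. The $L^2$ variant (\ref{eq:weak-l2-agfem}) will be reduced to an $H^1$-type control via the standard polynomial inverse inequality together with aggregate shape-regularity $h_U \simeq h_T$ for $T \in U$.

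For extended stability (\ref{eq:gp-extended-stability}), I would split $u_h = (u_h - \mathcal{P}_{h}^{\mathrm{ag}}(u_h)) + \mathcal{P}_{h}^{\mathrm{ag}}(u_h)$ and apply the triangle inequality. The projection piece is controlled by $\|\boldsymbol{\nabla}u_h\|_{\boldsymbol{L}^2(\Omega)}$ through the continuity (\ref{eq:discrete-extension-continuity}) of $\mathcal{E}_h^{\mathrm{ag}}$. The remainder is dominated by $\gamma^{-1}s_h(u_h,u_h)$ directly in the $H^1$ case, and after the inverse inequality in the $L^2$ case. Continuity (\ref{eq:gp-continuity}) follows by Cauchy--Schwarz once we verify $s_h(w,w)^{1/2} \lesssim \tnor{w}_{\mathcal{V}(h)}$; for $w \in \mathcal{V}_h^{\mathrm{act}}$ this again uses triangle inequality and (\ref{eq:discrete-extension-continuity}), while the extension of $\mathcal{P}_h^{\mathrm{ag}}$ to the $H^2$ part of $\mathcal{V}(h)$ is handled by defining $\mathcal{P}_{h}^{\mathrm{ag}}(u) \doteq \mathcal{E}_{h}^{\mathrm{ag}}(I_h u|_{\Omega_h^{\mathrm{in}}})$ with $I_h$ the Lagrangian interpolant, and absorbing the interpolation defect into the broken-$H^2$ contribution of $\tnor{\cdot}_{\mathcal{V}(h)}$.

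The crux is the weak consistency (\ref{eq:gp-weak-consistency}). By Cauchy--Schwarz together with the continuity bound, it reduces to $s_h(\mathcal{E}^{n+1}(u),\mathcal{E}^{n+1}(u))^{1/2} \lesssim h^n |u|_{H^{n+1}(\Omega)}$. Since $\mathcal{P}_h^{\mathrm{ag}}$ fixes $\mathcal{V}_h^{\mathrm{ag}}$, one subtracts an arbitrary element of $\mathcal{V}_h^{\mathrm{ag}}$ from $\mathcal{E}^{n+1}(u) - \mathcal{P}_h^{\mathrm{ag}}(\mathcal{E}^{n+1}(u))$ (absorbing the extra term by continuity of $\mathcal{P}_h^{\mathrm{ag}}$), takes the infimum, and invokes the approximability property (\ref{eq:discrete-extension-approximability}) with $s=1$ for the $H^1$ variant (giving $h^n$ directly) and with $s=0$ for the $L^2$ variant (giving $h^{n+1}$, compensated by the $h_U^{-1}$ weight).

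The main obstacle I foresee is that (\ref{eq:discrete-extension-approximability}) is stated globally on $\Omega$, whereas $s_h$ integrates over aggregates $U \subset \Omega_h^{\mathrm{act}}$: one must therefore reprove the approximability locally on $\Omega_h^{\mathrm{act}}$, which is a routine but non-trivial adaptation of \cite[Lemma~5.10]{Badia2018} using the cell-local form of the constraints (\ref{eq:agfem-constraints}) together with aggregate shape-regularity. A secondary subtlety, relevant only when $U^{\ast} = U\setminus\Omega$, is recovering control over the full cell $T$ from control over $T\setminus\Omega$ by polynomial norm equivalence; this is automatic when $|T\setminus\Omega|\gtrsim|T|$ and is otherwise propagated through the aggregate from the root cell, where $u_h - \mathcal{P}_h^{\mathrm{ag}}(u_h)$ vanishes identically.
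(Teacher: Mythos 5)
Your plan follows essentially the same route as the paper's proof: extended stability via the splitting $u_h=(u_h-\mathcal{P}_h^{\mathrm{ag}}u_h)+\mathcal{P}_h^{\mathrm{ag}}u_h$, the continuity~(\ref{eq:discrete-extension-continuity}) of the discrete extension and a cell-wise inverse inequality for the $L^2$ variant, and weak consistency/continuity by extending $\mathcal{P}_h^{\mathrm{ag}}$ to $\mathcal{V}(h)$ through an interpolation operator and then invoking the approximability~(\ref{eq:discrete-extension-approximability}) of $\mathcal{V}_h^{\mathrm{ag}}$ on $\Omega_h^{\mathrm{act}}$ --- exactly the point you flag as the main obstacle, which the paper dispatches in a footnote by noting that \cite[Lemma 5.10]{Badia2018} also applies on $\Omega_h^{\mathrm{act}}$. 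The only cosmetic difference is your use of the Lagrangian interpolant where the paper composes with the Scott--Zhang projector $\pi_h^{\mathrm{sz}}$, which is immaterial here since the continuous part of $\mathcal{V}(h)$ lies in $H^2(\Omega_h^{\mathrm{act}})$.
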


\begin{proof}
Let us start proving the extended stability~(\ref{eq:gp-extended-stability}) as follows. Using the stability of the discrete extension operator~(\ref{eq:discrete-extension-continuity}) and the triangle inequality, we get:
\begin{align}\label{eq:pr:weak-agfem-stability}
  \|\boldsymbol{\nabla}u_h\|_{\boldsymbol{L}_{2}(\Omega_{h}^{\mathrm{act}})}^{2} 
  & \leq \|\boldsymbol{\nabla}u_h\|_{\boldsymbol{L}_{2}(\Omega)}^{2} + \|\boldsymbol{\nabla} u_h - \boldsymbol{\nabla}\mathcal{P}_{h}^{\mathrm{ag}}(u_h)  \|_{\boldsymbol{L}_{2}(\Omega_{h}^{\mathrm{cut}} \setminus \Omega)}^{2} 
  + \|\boldsymbol{\nabla} \mathcal{P}_{h}^{\mathrm{ag}}(u_h) \|_{\boldsymbol{L}_{2}(\Omega_{h}^{\mathrm{cut}} \setminus \Omega)}^{2} \\ & \leq \|\boldsymbol{\nabla} u_h - \boldsymbol{\nabla}\mathcal{P}_{h}^{\mathrm{ag}}(u_h)  \|_{\boldsymbol{L}_{2}(\Omega_{h}^{\mathrm{cut}} \setminus \Omega)}^{2} + C \|\boldsymbol{\nabla}u_h\|_{\boldsymbol{L}_{2}(\Omega_{h}^{\mathrm{in}})}^{2},  
\end{align} 
for any $u_h \in \mathcal{V}_{h}^{\mathrm{act}}$. The result for $s_h$ in~(\ref{eq:weak-l2-agfem}) can be obtained by bounding the second term in~(\ref{eq:pr:weak-agfem-stability}) using the quasi-uniformity of the background mesh $\mathcal{T}_{h}^{\mathrm{act}}$, the standard inverse inequality in $\mathcal{T}_{h}^{\mathrm{act}}$ and the assumption that aggregate size $h_U$ is proportional to the root cell size times a constant that does not depend on $h$.

For weak consistency, we need to extend the definition of $s_h$ for $u \in H^{n+1}(\Omega_h^{\rm act})$. To this end, we invoke the standard Scott-Zhang interpolant $\pi_h^{\rm sz}: H^{n+1}(\Omega_h^{\rm act}) \longrightarrow \mathcal{V}_{h}^{\mathrm{act}}$, using the definition in~\cite{scott1990finite}. We let $\tilde{\mathcal{P}}^{\mathrm{ag}}_h = \mathcal{P}^{\mathrm{ag}}_h \circ \pi_h^{\mathrm{sz}}$; obviously $\tilde{\mathcal{P}}^{\mathrm{ag}}_h(u_h) = \mathcal{P}^{\mathrm{ag}}_h(u_h)$, if $u_h \in \mathcal{V}_{h}^{\mathrm{act}}$, since $\pi_{h}^{\mathrm{sz}}$ is a projection. Hence, weak consistency~(\ref{eq:gp-weak-consistency}) of the proposed methods can readily be proved using the optimal approximation properties of $\mathcal{V}_{h}^{\mathrm{ag}}$ in~(\ref{eq:discrete-extension-approximability}).\footnote{We note that \ac{agfem} does not require to define an extension of the continuous solution and the approximability property is only needed in $\Omega$. In any case, the proof in \cite[Lemma 5.10]{Badia2018} also applies for $\Omega_{h}^{\mathrm{act}}$.}
\begin{equation}\label{eq:pr:weak-agfem-convergence-h1}
  {\left| s_h(\mathcal{E}^{n+1}(u),v_h) \right| \lesssim | \mathcal{E}^{n+1}(u)  - \tilde{\mathcal{P}}_{h}^{\mathrm{ag}}(\mathcal{E}^{n+1}(u)) |_{{H}_{}^{1}(\Omega_{h}^{\mathrm{act}})} \| \boldsymbol{\nabla} v_h\|_{\boldsymbol{L}^2(\Omega_{h}^{\mathrm{act}})}} \lesssim h^{n} | u |_{H^{n+1}(\Omega)} \tnor{v_h}_{V(h)},
\end{equation}
for $s_h$ in~(\ref{eq:weak-h1-agfem})  and 
\begin{equation}\label{eq:pr:weak-agfem-convergence-l2}
  {\left| s_h(\mathcal{E}^{n+1}(u),v_h) \right| \lesssim \sum_{U \in \mathcal{T}_{h}^{\mathrm{ag}}}^{} h^{-2} \|  \mathcal{E}^{n+1}(u) - \tilde{\mathcal{P}}_{h}^{\mathrm{ag}}(\mathcal{E}^{n+1}(u)) \|^2_{{L}_{}^{2}(U)} \| v_h \|_{L^2(U)}} \lesssim h^{n} | u |_{H^{n+1}(\Omega)} \tnor{v_h}_{V(h)} 
\end{equation}
for $s_h$ in~(\ref{eq:weak-l2-agfem}). The proof of continuity~(\ref{eq:gp-continuity}) relies on the $H^1$-stability of the Scott-Zhang interpolant and the discrete extension operator in (\ref{eq:discrete-extension-continuity}). 
\end{proof}

We observe that, for $U^* \doteq U \setminus \Omega$, \ac{wagH1} in~(\ref{eq:weak-h1-agfem}) can also be understood as an improvement of the so-called finite cell method~\cite{Nguyen2017} that is both convergent and robust. Instead of substracting a projection, the finite cell method adds a grad-grad term on $\Omega_{h}^{\mathrm{act}}\setminus \Omega$, premultiplied by a numerical parameter. When the parameter is large, convergence is deteriorated. When the parameter is small, robustness is affected. Subtracting the projection, these problems are solved, since the resulting method enjoys weak consistency.

\section{Implementation aspects}\label{sec:implementation-aspects}

The implementation of the previous methods have some non-standard requirements that are not present in body-fitted \ac{fe} codes. First, there are some geometrical functions that have to be implemented. 

All methods, but the \ac{fgp}~(\ref{eq:face-penalty}), require the computation of an aggregated mesh with the properties described in Sec.~\ref{sec:cell-aggregation}. Furthermore, all the \ac{gp} methods, but the original bulk penalty in~(\ref{eq:bulk-penalty}), require the $n$-face to aggregate map described in Sec.~\ref{sec:agfem}. Besides, the \ac{gp} method needs to identify the set of faces that are in touch with cut cells and intersect the domain $\Omega$. In any case, the implementation of all these steps is quite straightforward and can be easily implemented in distributed memory machines (see \cite{Verdugo2019,Badia2020Jun}).

The implementation of \ac{fgp} involves jumps of normal derivatives up to the order of the \ac{fe} space. This functionality is uncommon in \ac{fe} software (see, e.g., \cite{badia-fempar,dealii2019design,Badia2020Aug}). The implementation of the \ac{bgp} methods requires the computation of aggregate-wise $L^2$ projections, which is a non-obvious task in \ac{fe} software, since it breaks the standard procedure, namely to compute cell-local matrices, assemble the local matrices in a global sparse array and solve a global linear system.

In order to avoid this implementation issue, we can consider an interpolator in the spirit of \ac{wagL2}. We replace the aggregate-wise $L^2$ projection in~(\ref{eq:bulk-penalty}) by the interpolation of the \ac{fe} function on the root cell over the aggregate. Let $\mathcal{V}_{h}^{\#,-}$ be the \ac{dg} counterpart of $\mathcal{V}_{h}^{\#}$, for $\# \in \{ \mathrm{in},\mathrm{cut}\}$. We can define the \ac{dg} discrete extension operator $\mathcal{E}_{h}^{\mathrm{ag},-}$ using the definition above for the \ac{dg} spaces. We note that the construction of the extension is much simpler in this case, since all \acp{dof} belong to cells. Since $\mathcal{V}_{h}^{\mathrm{in}} \subset \mathcal{V}_{h}^{\mathrm{in},-}$, we readily obtain  $\mathcal{E}_{h}^{\mathrm{ag},-}: \mathcal{V}_{h}^{\mathrm{in}} \longrightarrow \mathcal{V}_{h}^{\mathrm{act},-}$. Analogously, we can define the projector $\mathcal{P}^{\mathrm{ag},-}_h: \mathcal{V}_{h}^{\mathrm{act}} \longrightarrow \mathcal{V}_{h}^{\mathrm{ag},-}$ that takes $v_h \in \mathcal{V}_{h}^{\mathrm{act}}$, computes its restriction to the interior $v_h|_{\Omega_{h}^{\mathrm{in}}} \in \mathcal{V}_{h}^{\mathrm{in}}$ and applies the \ac{dg}  extension $\mathcal{E}^{\mathrm{ag},-}_h$, i.e., $\mathcal{P}_{h}^{\mathrm{ag},-}(v_h) \doteq \mathcal{E}^{\mathrm{ag},-}_h \left(  v_h|_{\Omega_{h}^{\mathrm{in}}} \right)$. Now, we can define an alternative \textbf{\ac{bgpi}} as follows:
\begin{equation}\label{eq:bulk-interp-agfem}
  s_h(u_h,v_h) \doteq \sum_{U \in \mathcal{T}_{h}^{\mathrm{ag}}}^{} \left( \gamma h_U^{-2} \left(   u_h - \mathcal{P}_{h}^{\mathrm{ag},-}(u_h) \right), v_h - \mathcal{P}_{h}^{\mathrm{ag},-}(v_h) \right)_{U}.
\end{equation}
The proof for the bulk penalty method~(\ref{eq:bulk-penalty}) readily applies for this version, since the aggregate-wise interpolant share the required continuity and error bounds of the aggregate-wise $L^2$ projection. On the other hand, it is penalising the component that does not belong to the same space, thus it has the same locking problems. Since the interpolant-based version simplifies the implementation, this is the one to be used in Sect.~\ref{sec:numerical_experiments}.

The \ac{gp} and \ac{agfe} methods make use of an ill-posed to well-posed \ac{dof} that can be readily computed with the geometrical machinery described above and the local-to-global \ac{dof} map. These methods require the implementation of a discrete extension operator. In general, it can be stored as the set of linear constraints in~(\ref{eq:agfem-constraints}) that can be implemented cell-wise (traversing all root cells). The computation of this constraints is much simpler than in $h$ and $hp$-adaptivity (see, e.g., \cite{badia-fempar,dealii2019design,Alnaes2015Dec}). It requires to evaluate the shape functions of the root cell in the nodes of the ill-posed \acp{dof} (see~(\ref{eq:agfem-constraints})). In most \ac{fe} codes, it requires the implementation of the geometrical map inverse, in order to map points from the physical to the reference space, which is not standard in general. {This problem is trivial for Cartesian background meshes, which is arguably the most natural and efficient choice.}

For the (strong) \ac{agfem}, one must impose these constraints in the assembly process. This is readily available (for more complex constraints) in any code that allows adaptive mesh refinement with hanging nodes (see, e.g., \cite{badia-fempar}). For the novel weak versions \ac{wagL2} and \ac{wagH1}, the assembly process does not involve the imposition of  constraints. However, the local matrix in cut cells does not only include the local \acp{dof} but also the local \acp{dof} of the root cell. E.g., for \ac{wagH1}, we have to assemble the following entries: 
\begin{align}
&  \left( \boldsymbol{\nabla}\phi_i, \boldsymbol{\nabla}\phi_j \right)_T, \quad && {i,j \in\overline{\mathcal{O}}_{h}^{\mathrm{nf}\to \mathrm{dof}}(T)}, \\ 
& \left( \boldsymbol{\nabla}\phi_i, \mathcal{E}_{h}^{\mathrm{ag}}(\boldsymbol{\nabla}\phi_k) \right)_T, \quad 
&& i \in \overline{\mathcal{O}}_{h}^{\mathrm{nf}\to \mathrm{dof}}(T), k \in \mathcal{O}_{h}^{\mathrm{\mathrm{ipd} \to \mathrm{wpd}}} \circ \overline{\mathcal{O}}_{h}^{\mathrm{nf}\to \mathrm{dof}}(T),
\end{align}
and the transpose of the second term, for any $T \in \mathcal{T}_h^{\mathrm{cut}}$. 

\section{Numerical experiments}\label{sec:numerical_experiments}

\subsection{Methods and parameter space} % (fold)
\label{ssub:Methods and parameter space}

We collect in Table~\ref{tab:methods} all the numerical methods that will be analysed in this work, in terms of the stabilisation term $s_h$ and the \ac{fe} space being used. We consider first the \ac{fgp} method defined in (\ref{eq:face-penalty}). We include the expression of $s_h$ for linear elements. Higher order approximations would require to evaluate inter-facet jumps of higher order derivatives that are not available in the numerical framework \texttt{Gridap} \cite{Badia2020Aug}. We also consider the modification of this method~(\ref{eq:weak-face-penalty}), that only penalises intra-aggregate facets (\ac{agp}). As indicated in Sect.~\ref{sec:implementation-aspects}, we consider the interpolation-based \ac{bgpi} method~(\ref{eq:bulk-interp-agfem}). We also analyse the results for both versions of the weak \ac{agfem}: \ac{wagL2} and \ac{wagH1}, i.e., using the $L^2$ product~(\ref{eq:weak-l2-agfem}) and $H^1$ product~(\ref{eq:weak-h1-agfem}). In both cases, we set $U^* = U$. All these methods make use of the standard \ac{fe} space $\mathcal{V}_{h}^{\mathrm{act}}$ on the active mesh $\mathcal{T}_{h}^{\mathrm{act}}$. Finally, we compare all these stabilised formulations against the \textbf{\ac{sag}} in Sect.~\ref{sec:agfem}, which makes use of the \ac{fe} space $\mathcal{V}_{h}^{\mathrm{ag}}$.

\begin{table}[ht!]
	\centering
	\begin{small}
		\begin{tabular}{llll}
			\toprule
			Acronym & Stabilisation form $s_h(u_h,v_h)$ & \ac{fe} space & Legend \\
			\midrule
			\ac{fgp}                       & $\sum_{F \in \mathcal{F}_h^{\mathrm{gh},\mathrm{cut}}}^{} \left( \gamma h_F \llbracket \partial_n u_h \rrbracket , \llbracket \partial_n v_h \rrbracket  \right)_F$ & $\mathcal{V}_{h}^{\mathrm{act}}$  & \includegraphics[width=0.07\textwidth]{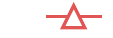} \\
			\ac{agp}                       & $\sum_{F \in \mathcal{F}_h^{\mathrm{gh},\mathrm{ag}}}^{} \left( \gamma h_F \llbracket \partial_n u_h \rrbracket , \llbracket \partial_n v_h \rrbracket  \right)_F$ & $\mathcal{V}_{h}^{\mathrm{act}}$  & \includegraphics[width=0.07\textwidth]{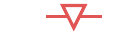} \\
			\ac{bgpi}                       & $\sum_{U \in \mathcal{T}_{h}^{\mathrm{ag}}}^{} \left( \gamma h_U^{-2} \left(   u_h - \mathcal{P}_{h}^{\mathrm{ag},-}(u_h) \right), v_h - \mathcal{P}_{h}^{\mathrm{ag},-}(v_h) \right)_{U}$ & $\mathcal{V}_{h}^{\mathrm{act}}$ & \includegraphics[width=0.07\textwidth]{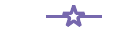}\\
			\ac{wagL2}                 & $\sum_{U \in \mathcal{T}_{h}^{\mathrm{ag}}}^{} \left( \gamma h_U^{-2} \left(   u_h - \mathcal{P}_{h}^{\mathrm{ag}}(u_h) \right), v_h - \mathcal{P}_{h}^{\mathrm{ag}}(v_h) \right)_{U}$ & $\mathcal{V}_{h}^{\mathrm{act}}$ & \includegraphics[width=0.07\textwidth]{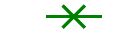} \\
			\ac{wagH1} & $\sum_{U \in \mathcal{T}_{h}^{\mathrm{ag}}}^{} \left( \gamma \boldsymbol{\nabla}\left(   u_h - \mathcal{P}_{h}^{\mathrm{ag}}(u_h) \right), \boldsymbol{\nabla} \left(  v_h - \mathcal{P}_{h}^{\mathrm{ag}}(v_h)  \right)\right)_{U}$ & $\mathcal{V}_{h}^{\mathrm{act}}$ & \includegraphics[width=0.07\textwidth]{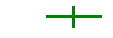} \\
			\ac{sag}                       & 0 & $\mathcal{V}_{h}^{\mathrm{ag}}$ & \includegraphics[width=0.07\textwidth]{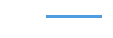} \\
			\bottomrule
		\end{tabular}
	\end{small}
	\caption{Summary of the different methods used and their corresponding symbols in the numerical examples.}
	\label{tab:methods}
\end{table}

We show in Table~\ref{tab:params} the parameter space being explored. We have reduced, as much as possible, this space, due to length constraints. We solve the Poisson problem~(\ref{eq:poisson-weak}) and linear elasticity~(\ref{eq:elasticity-weak}) discrete problems for two different sets of boundary conditions, namely weak imposition of Dirichlet boundary conditions on the whole boundary and Neumann and strong Dirichlet boundary conditions. This way, we can analyse the behaviour of the methods with respect to Nitsche terms and Neumann boundary conditions separately. {To compute errors analytically, we use polynomial manufactured solutions of the form $u(x,y) = (x+y)^{m+1}$ in 2D and $u(x,y,z) = (x+y+z)^{m+1}$ in 3D, with $m$ the order of the \ac{fe} space.}

\begin{table}[ht!]
	\centering
	\begin{small}
		\begin{tabular}{ll}
			\toprule
			Description & Considered methods/values \\
			\midrule
			Model problem & Poisson equation (Nitsche's formulation); \\ 
                    & Linear elasticity (Neumann and strong Dirichlet)
			\vspace{0.12cm} \\
			{Analytical solution} & {$u(x,y) = (x+y)^{m+1}$ (2D) and $u(x,y,z) = (x+y+z)^{m+1}$ (3D)} \\
					& {$m$ is the \ac{fe} interpolation order}
	  \vspace{0.12cm} \\
			Problem geometry & 2D: square; 3D: cube, tilted cube, sphere, 8-norm sphere 
			\vspace{0.12cm} \\
			Interpolation & $\mathcal{C}^0$ linear and quadratic \acp{fe} on simplicial meshes \vspace{0.12cm} \\
			Coef. in Nitsche's penalty term & $\beta = 10.0 \ m^2$ \\
			\bottomrule
		\end{tabular}
	\end{small}
	\caption{Summary of the main parameters and computational strategies used in
	the numerical examples.}
	\label{tab:params}
\end{table}

We have observed that the use of simplicial and hexahedral meshes leads to similar conclusions. So, we only show results for simplicial meshes below. We have considered the square in 2D and the cube, a tilted cube, the sphere and the 8-norm sphere in 3D. All geometries are represented in Figure~\ref{fig:geometries}. They are centred at the origin of coordinates. For weak Dirichlet tests, we simulate the whole geometry; while only the region in the first quadrant/octant for mixed boundary conditions. We have configured all geometries, in a similar way as in~\cite{de2018note}, to ensure that the small cut cell problem is present in all cases with $\min_{T \in \mathcal{T}_{h}^{\mathrm{act}}}\eta_T$ of the order of $10^{-8}$. In the case of the square, cube and 8-norm sphere, we also enforce sliver cuts. We have considered both linear and quadratic $\mathcal{C}^0$ Lagrangian \ac{fe} spaces. We pay particular attention to the penalty parameter $\gamma$; as we will see, some methods are highly sensitive to this value.  

\begin{figure}[!h]
	\centering
	\begin{subfigure}[t]{0.48\textwidth}
		\includegraphics[width=0.95\textwidth,trim=0.5cm 0.5cm 0.5cm 0.5cm,clip=true]{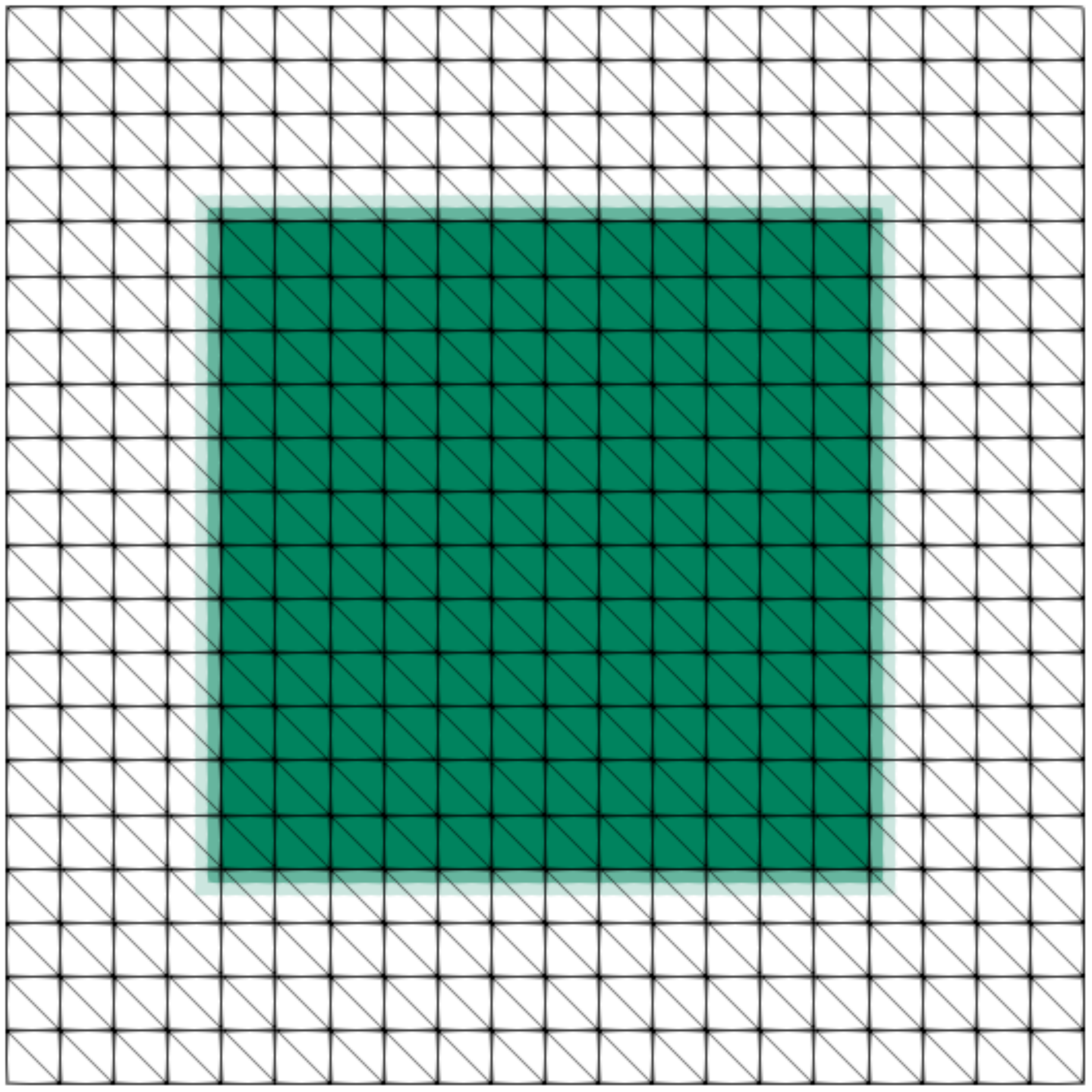}
	\caption{2D square and 3D cube on plane $y = 0$}\label{fig:geometries-a}
	\end{subfigure}
	\begin{subfigure}[t]{0.48\textwidth}
		\includegraphics[width=0.95\textwidth,trim=0.5cm 0.5cm 0.5cm 0.5cm,clip=true]{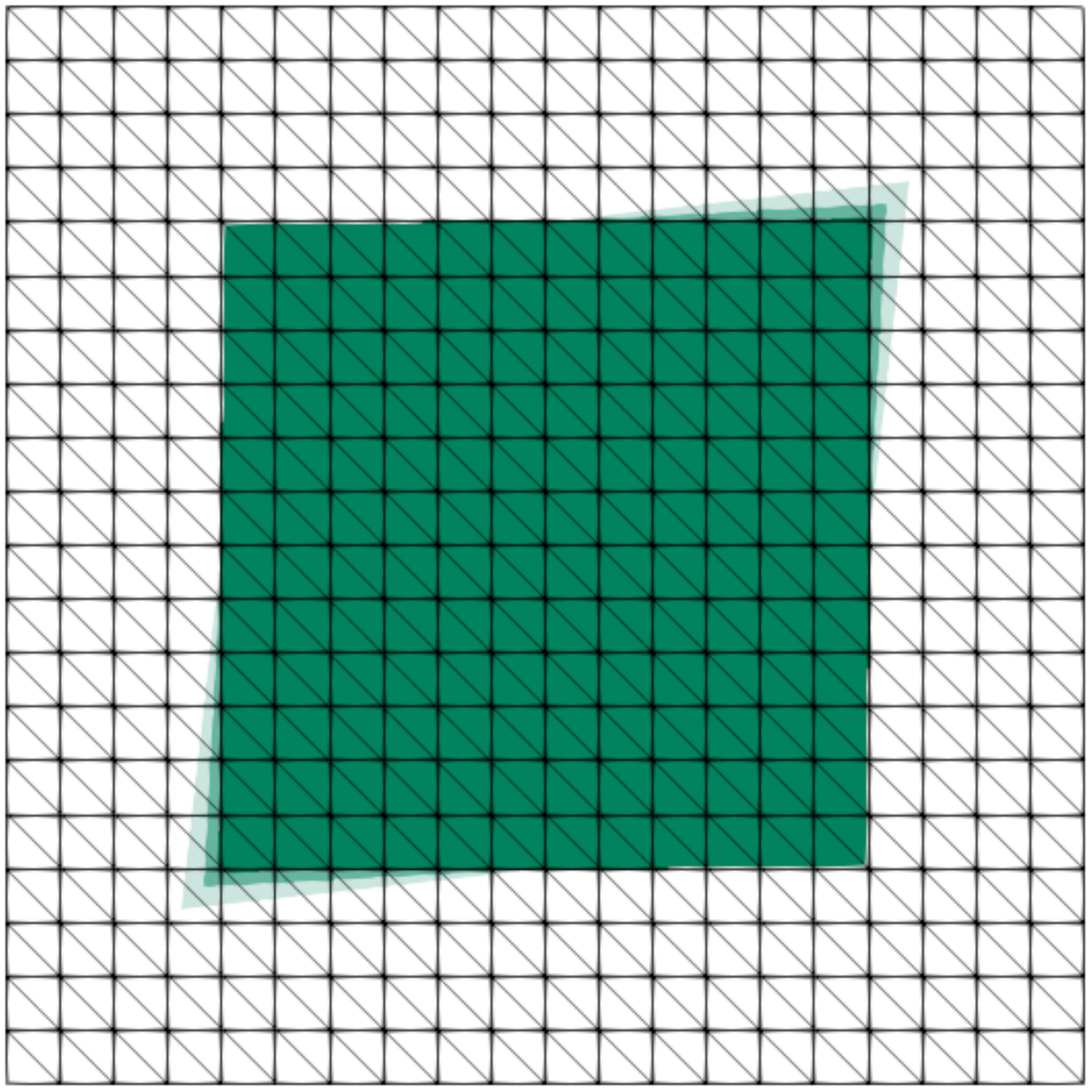}
    	\caption{3D tilted cube on plane $y = 0$}\label{fig:geometries-b}
	\end{subfigure} \\
	\vspace{0.6em}
	\begin{subfigure}[t]{0.48\textwidth}
		\includegraphics[width=0.95\textwidth,trim=0.5cm 0.5cm 0.5cm 0.5cm,clip=true]{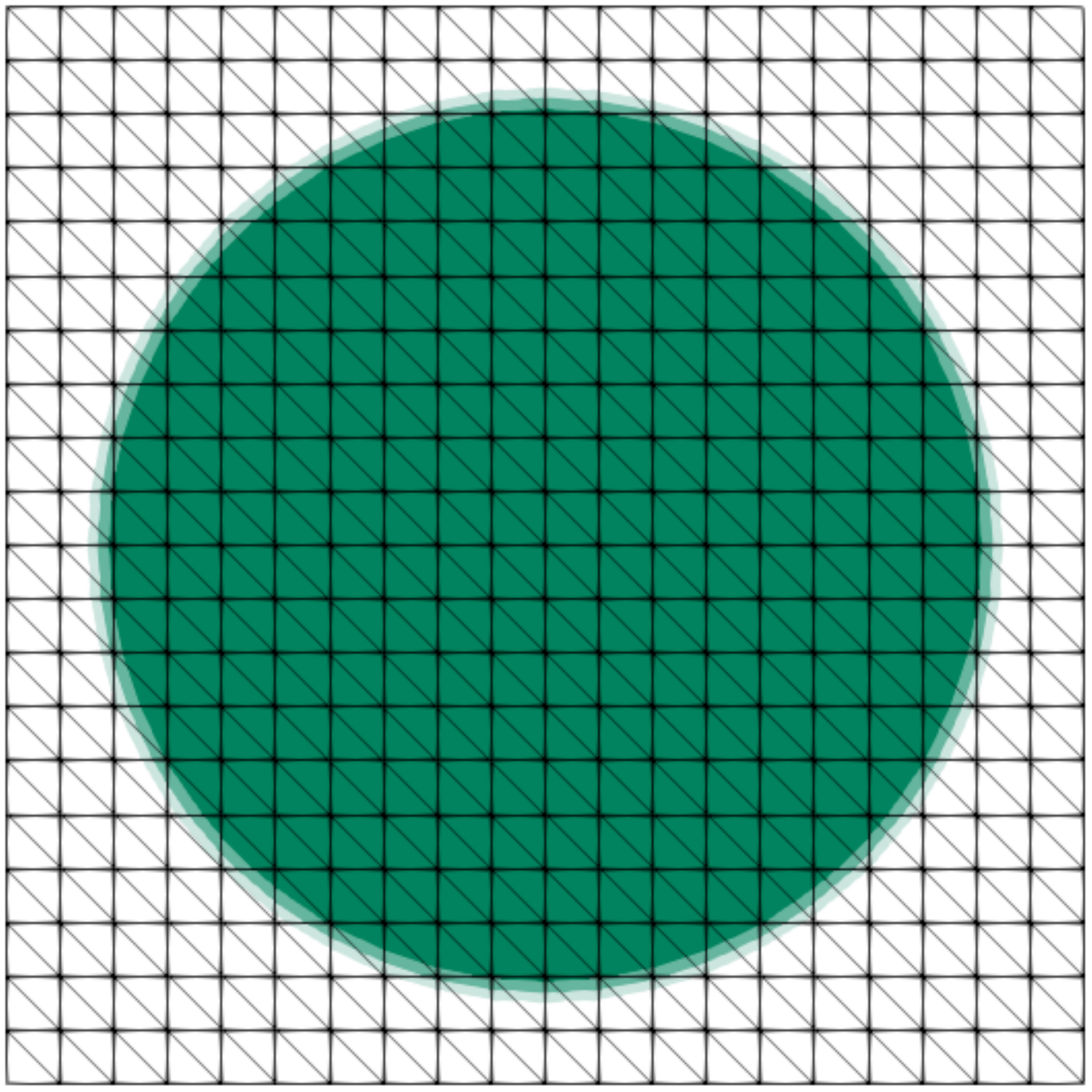}
		\caption{3D sphere on plane $y = 0$}
	\end{subfigure}
	\begin{subfigure}[t]{0.48\textwidth}
		\includegraphics[width=0.95\textwidth,trim=0.5cm 0.5cm 0.5cm 0.5cm,clip=true]{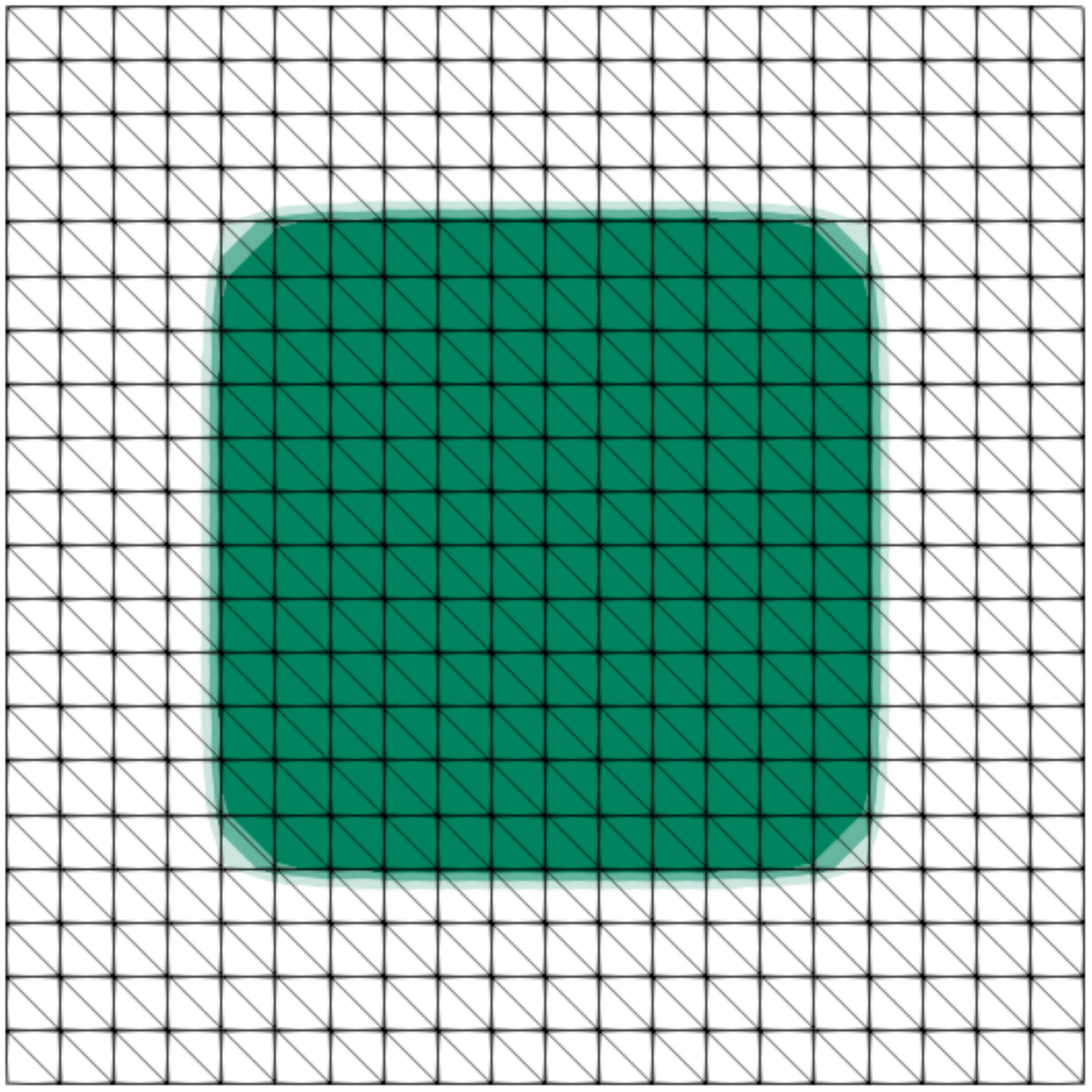}
    	\caption{3D $8$-norm sphere on plane $y = 0$}\label{fig:geometries-d}
	\end{subfigure}
	\caption[]{{Geometries considered in the sensitivity analysis. We superpose different configurations of each geometry, with increasing opacity, to expose how we have shrinked the geometries to force cut volumes of the order of $10^{-8}$. We refer the reader to~\cite{de2018note} for the level set descriptions of~\ref{fig:geometries-a},~\ref{fig:geometries-b} and~\ref{fig:geometries-d}.}}
	\label{fig:geometries}
\end{figure}

\subsection{Experimental environment} % (fold)
\label{sub:exp_env}

All the algorithms have been implemented in the \texttt{Gridap} open-source scientific software project \cite{Badia2020Aug}. \texttt{Gridap} is a novel framework for the implementation of grid-based algorithms for the discretisation of \acp{pde} written in the \texttt{Julia} programming language. \texttt{Gridap} has a user interface that resembles the whiteboard mathematical statement of the problem. The framework leverages the \texttt{Julia} just-in-time (JIT) compiler to generate high-performant code. \texttt{Gridap} is extensible and modular and has many available plugins. In particular, we have extensively used and extended the \texttt{GridapEmbedded} plugin, which provides all the mesh queries required in the implementation of the embedded methods under consideration, level set surface descriptions and constructive solid geometry. We use the \texttt{cond()} method provided by \texttt{Julia} to compute condition numbers. Condition numbers have been computed in the $1$-norm for efficiency reasons. It was not possible to compute the $2$-norm condition number for all cases with the available computational resources.

The numerical experiments have been carried out at the TITANI cluster of the Universitat Polit\`{e}cnica de Catalunya (Barcelona, Spain), and  NCI-Gadi \cite{NCIgadi}, hosted by the Australian National Computational Infrastructure Agency (NCI). NCI-Gadi is a petascale machine with 3024 nodes, each containing 2x 24-core Intel Xeon Scalable \textit{Cascade Lake} processors and 192 GB of RAM.  All nodes are interconnected via Mellanox Technologies' latest generation HDR InfiniBand technology.

\subsection{Sensitivity analysis} % (fold)
\label{sub:Experimental results}

We consider first log-log plots. They show values of the penalty parameter $\gamma = 10^{-\alpha}$ for $\alpha \in \left\{ -2, -1, 0, 1, 2, 4, 6, 8 \right\}$, in the X-axis, and the value of the condition number (in 1-norm) and the $L^2$ and $H^1$ errors, in the Y-axis. These plots allow us to easily compare the behaviour of the different methods and observe their dependency with respect to the penalty parameter $\gamma$. 

\subsubsection{2D results} % (fold)
\label{ssub:2D results}

We show in Figure~\ref{fig:k-l2e-h1e-vs-gamma-square-k1-k2-dir-poisson} the plots corresponding to the 2D square using both linear and quadratic \acp{fe} for the solution of the Poisson problem with weak imposition of Dirichlet boundary conditions. 

Let us consider first the condition number plots. For linear elements, we observe that all weak methods have the same behaviour. In all cases, the minimum is attained at $\gamma = 1$. The condition number increases as $\gamma \to 0$. The stabilisation is not enough to fix the small cut cell problem effect on the condition number. The condition number linearly increases as $\gamma \to \infty$. This phenomenon is well-understood. The minimum eigenvalue does not grow with $\gamma$, since eigenvectors can be in the kernel of the penalty term, i.e., they can belong to a subspace that cancels the penalty term. On the contrary, the maximum eigenvalue linearly increases with $\gamma$. The results for second order elements are similar, even though there is a more erratic behaviour for values of $\gamma$ below 1. The condition number of the strong \ac{agfem} (\ac{sag}) is constant (the method does not depend on $\gamma$) and is below weak schemes in all cases.

Now, we analyse the $L^2$ and $H^1$ error of the methods. For linear elements, we can observe the \emph{locking} phenomenon predicted in Sect.~\ref{sub:gp-limit} in the limit $\gamma \to \infty$. We can observe that we have three types of weak methods. The worst method is \ac{fgp}, because it is the one that rigidises on all ghost skeleton facets (see Sect.~\ref{sub:gp-limit}). The methods that only introduce intra-aggregate rigidisation, i.e., the weaker facet-stabilisation \ac{agp} and the bulk-based method \ac{bgpi}, show slightly lower errors, but still exhibit \emph{locking} and are not convergent methods in the limit. It is more serious for practical purposes the strong sensitivity of these methods to $\gamma$. The behavior is highly erratic. For linear elements, the best result is for $\gamma = 10^{-2}$, but this is not the case for quadratic elements. Besides, \ac{fgp}, \ac{agp} and \ac{bgpi} are worse than \ac{wagL2}, \ac{wagH1} and \ac{sag} in all cases but $\gamma = 1$, where \ac{bgpi} has a similar behavior as the Ag-based methods. The novel weak versions of \ac{agfem} are far less sensitive to $\gamma$. The methods can exhibit some instability as $\gamma \to 0$, since the stabilisation is not enough. However, the methods are very robust for $\gamma \geq 1$. On the other hand, as stated in Sect.~\ref{sub:weak-agfem}, W-Ag-* show the same error as \ac{sag}, indicating that these algorithms do converge to \ac{sag} in the limit $\gamma \to \infty$. Thus, these methods are not affected by any \emph{locking} in this limit.  

\newcommand{\addlegend}{
\begin{footnotesize}
\begin{tabular}{cccccc}
\includegraphics[width=0.06\textwidth]{fig_legend_m=2.pdf} \ac{fgp} &
\includegraphics[width=0.06\textwidth]{fig_legend_m=3.pdf} \ac{agp} &
\includegraphics[width=0.06\textwidth]{fig_legend_m=5.pdf} \ac{bgpi} &
\includegraphics[width=0.06\textwidth]{fig_legend_m=4.pdf} \ac{wagL2} &
\includegraphics[width=0.06\textwidth]{fig_legend_m=6.pdf} \ac{wagH1} &
\includegraphics[width=0.06\textwidth]{fig_legend_m=1.pdf} \ac{sag}
\end{tabular}
\end{footnotesize}
}

\newcommand{\addlegendref}{
\begin{footnotesize}
\begin{tabular}{ccccccc}
\includegraphics[width=0.06\textwidth]{fig_legend_m=2.pdf} \ac{fgp} &
\includegraphics[width=0.06\textwidth]{fig_legend_m=3.pdf} \ac{agp} &
\includegraphics[width=0.06\textwidth]{fig_legend_m=5.pdf} \ac{bgpi} &
\includegraphics[width=0.06\textwidth]{fig_legend_m=4.pdf} \ac{wagL2} &
\includegraphics[width=0.06\textwidth]{fig_legend_m=6.pdf} \ac{wagH1} &
\includegraphics[width=0.06\textwidth]{fig_legend_m=1.pdf} \ac{sag} \\
\multicolumn{6}{c}{\includegraphics[width=0.06\textwidth]{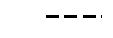}~Theoretical~slope}
\end{tabular}
\end{footnotesize}
}

\begin{figure}[!h]
	\centering
	\addlegend
	
	\vspace*{0.5em}	
	\begin{subfigure}[t]{0.32\textwidth}
\includegraphics[width=1.00\textwidth]{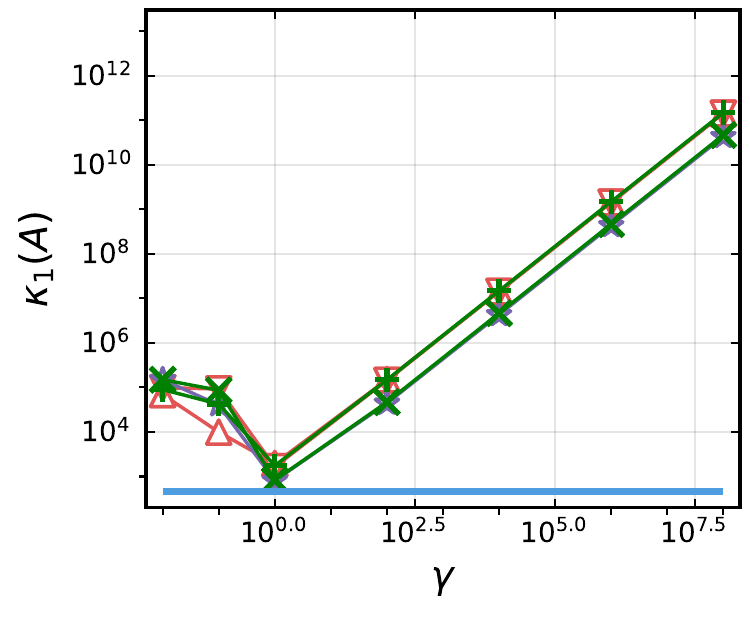}
    \caption{$\kappa_1(A)$ for order 1}
	\end{subfigure}
	\begin{subfigure}[t]{0.32\textwidth}
\includegraphics[width=1.00\textwidth]{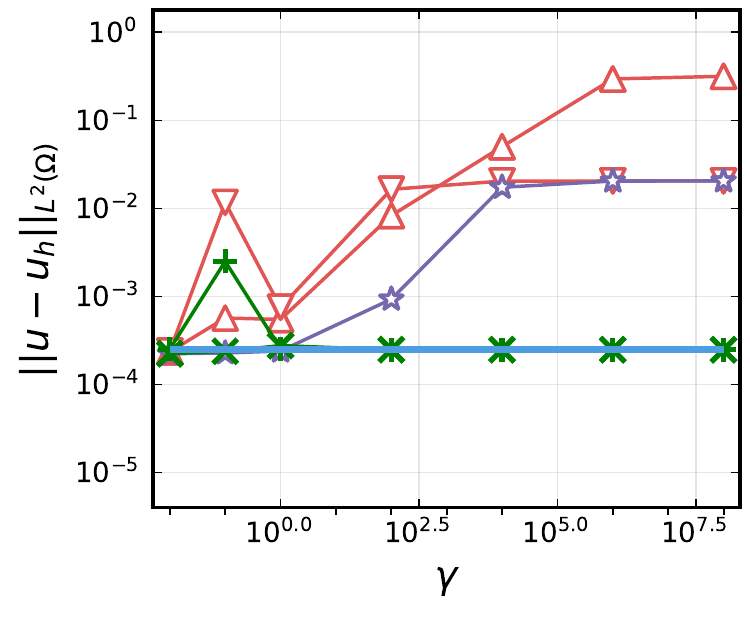}
    \caption{$\|u - u_h\|_{L^2(\Omega)}$ for order 1}
	\end{subfigure}
	\begin{subfigure}[t]{0.32\textwidth}
\includegraphics[width=1.00\textwidth]{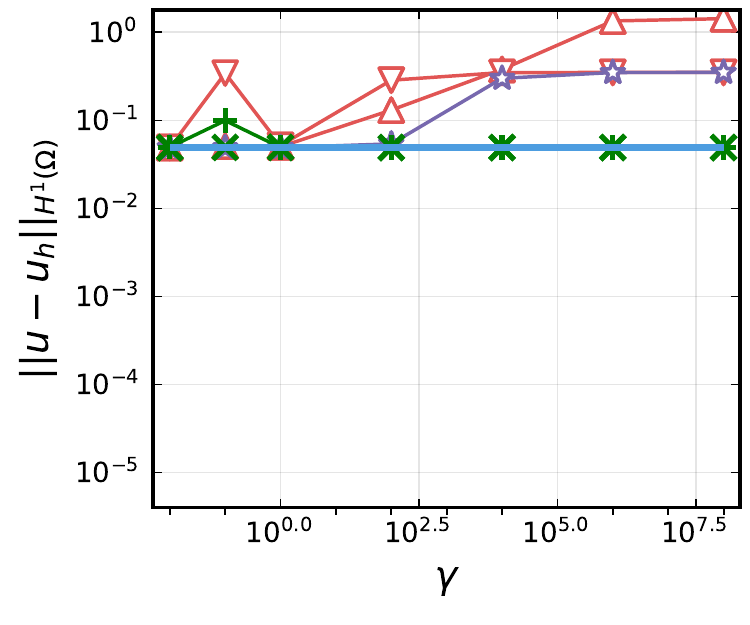}
    \caption{$\|u - u_h \|_{H^1(\Omega)}$ for order 1}
	\end{subfigure} \\
\vspace{0.6em}
\begin{subfigure}[t]{0.32\textwidth}
\includegraphics[width=1.00\textwidth]{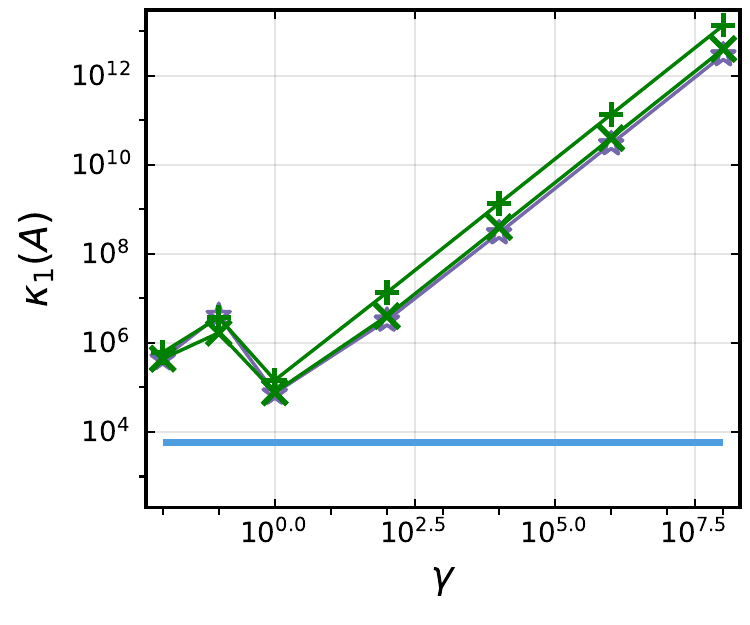}
    \caption{$\kappa_1(A)$ for order 2}
	\end{subfigure}
	\begin{subfigure}[t]{0.32\textwidth}
\includegraphics[width=1.00\textwidth]{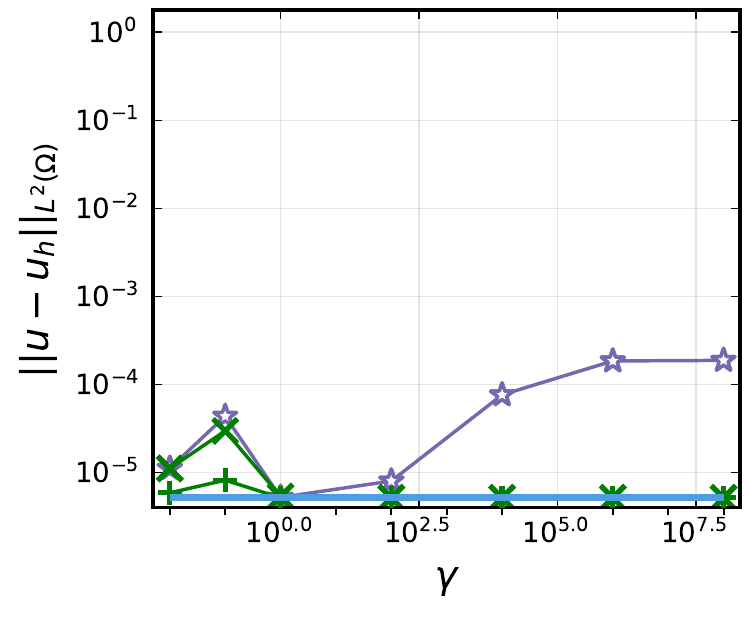}
    \caption{$\|u - u_h\|_{L^2(\Omega)}$ for order 2}
	\end{subfigure}
	\begin{subfigure}[t]{0.32\textwidth}
\includegraphics[width=1.00\textwidth]{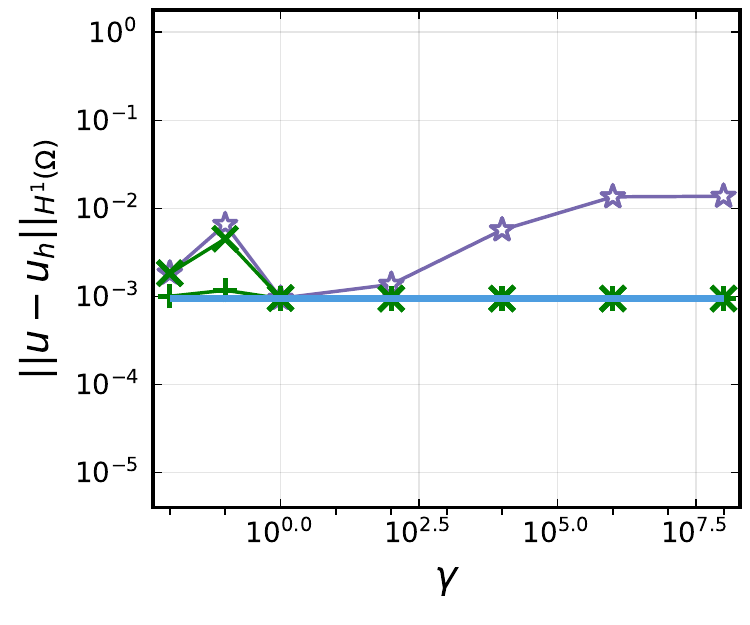}
    \caption{$\|u - u_h \|_{H^1(\Omega)}$ for order 2}
	\end{subfigure}
	\caption[]{Condition number $\kappa_1(A)$, and error norms $\|u - u_h\|_{L^2(\Omega)}$ and $\|u - u_h \|_{H^1(\Omega)}$ vs. penalty parameter $\gamma$ for the Poisson problem with Nitsche's method on the square using linear and quadratic elements {in a $40\times40$ structured triangular mesh}.}
	\label{fig:k-l2e-h1e-vs-gamma-square-k1-k2-dir-poisson}
\end{figure}

{Before reporting the 3D results, we take a closer look at the \emph{locking} phenomenon. In Figure~\ref{fig:stiffness}, we consider plots of the solution in the square for $\gamma = 10000$, using linear elements in a $20 \times 20$ structured triangular mesh. As expected, the \ac{fgp} solution is affected by severe locking and completely wrong. The \ac{agp} and \ac{bgpi} solutions are barely indistinguishable; they are still polluted by locking, which constraints the values of interior \acp{dof} away from the analytical solution. Finally, W-Ag-* are completely free of locking and coincide with the \ac{sag} solution.}

\begin{figure}[!h]
	\centering
	\begin{subfigure}[t]{0.26\textwidth}
		\centering
		\includegraphics[width=0.95\textwidth, trim=5.25cm 15.0cm 5.25cm 0.75cm, clip=true]{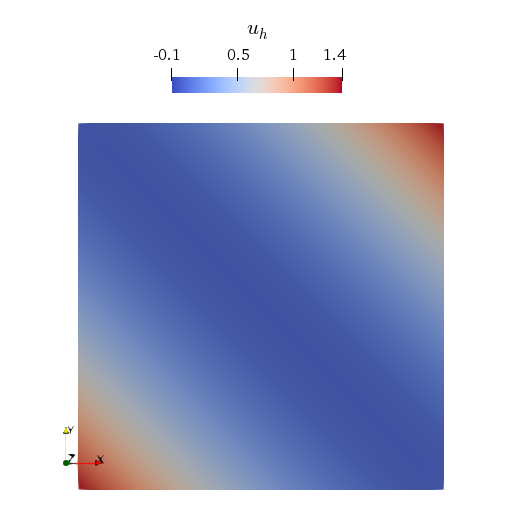}
	\end{subfigure} \\
	\vspace{0.6em}
	\begin{subfigure}[t]{0.28\textwidth}
		\centering
		\includegraphics[width=0.95\textwidth]{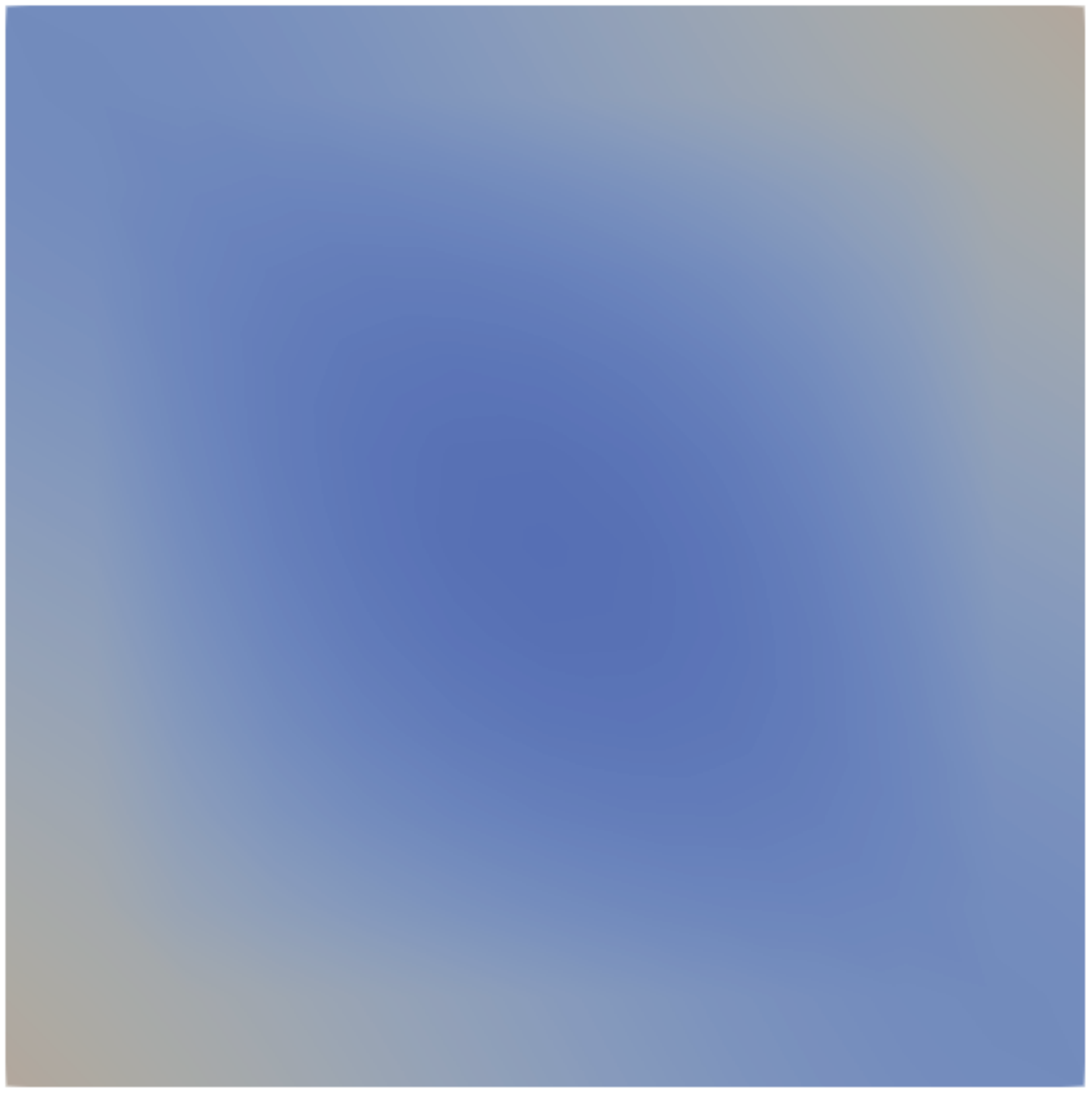}
		\caption{\ac{fgp}}
	\end{subfigure}
	\begin{subfigure}[t]{0.28\textwidth}
		\centering
		\includegraphics[width=0.95\textwidth]{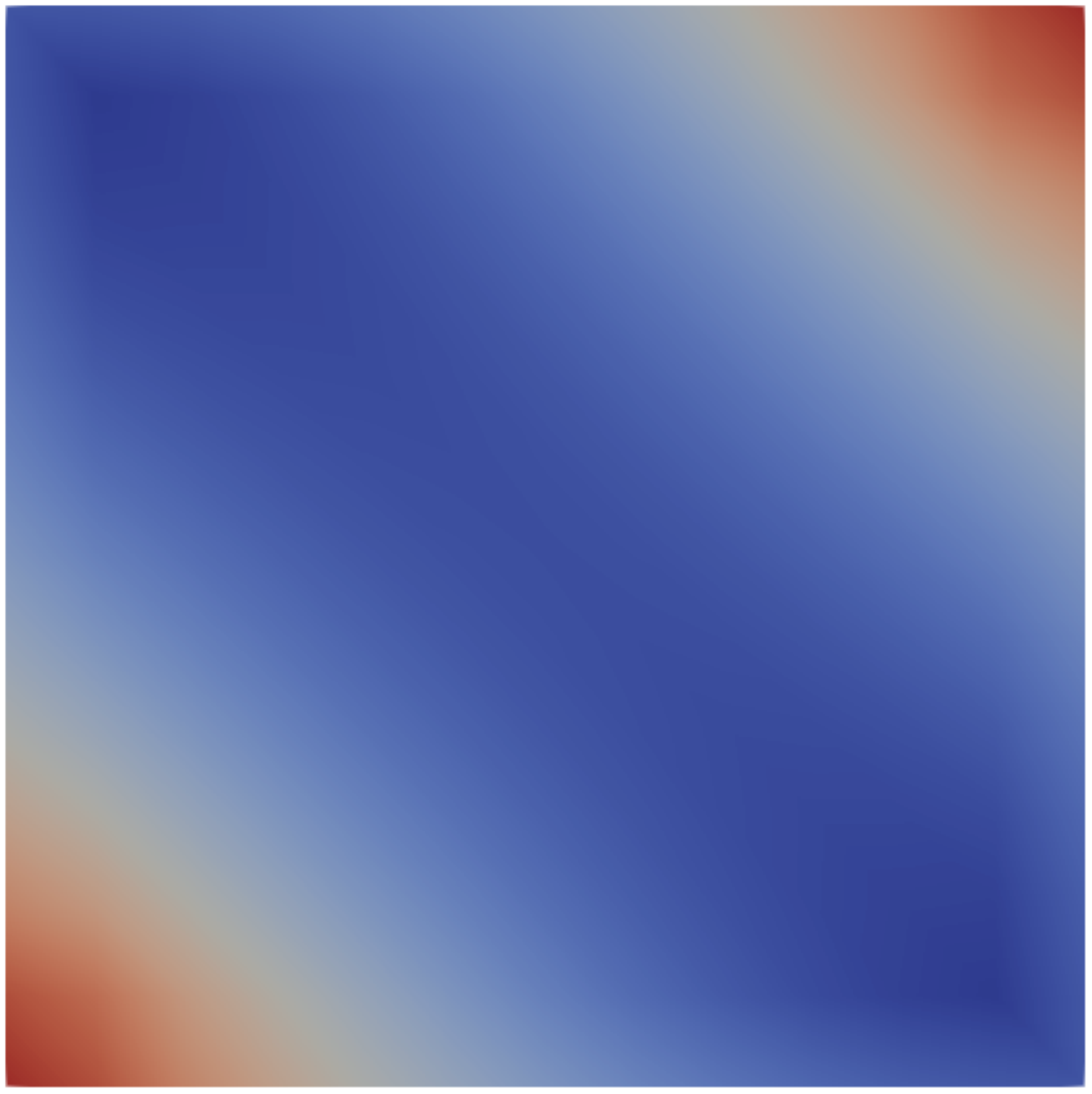}
		\caption{\ac{agp}}
	\end{subfigure}
	\begin{subfigure}[t]{0.28\textwidth}
		\centering
		\includegraphics[width=0.95\textwidth]{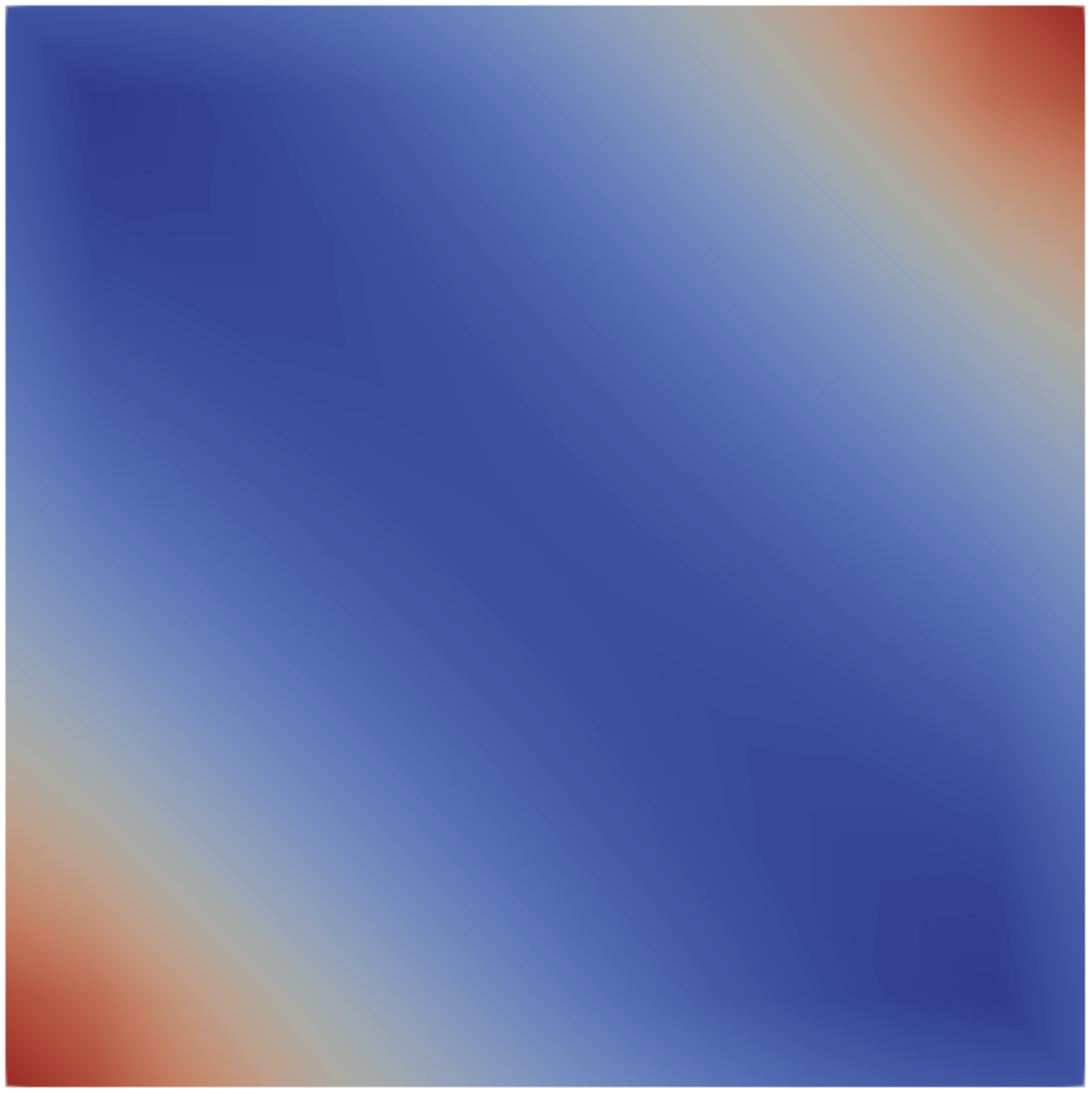}
    	\caption{\ac{bgpi}}
	\end{subfigure} \\
	\vspace{0.6em}
	\begin{subfigure}[t]{0.28\textwidth}
		\centering
		\includegraphics[width=0.95\textwidth]{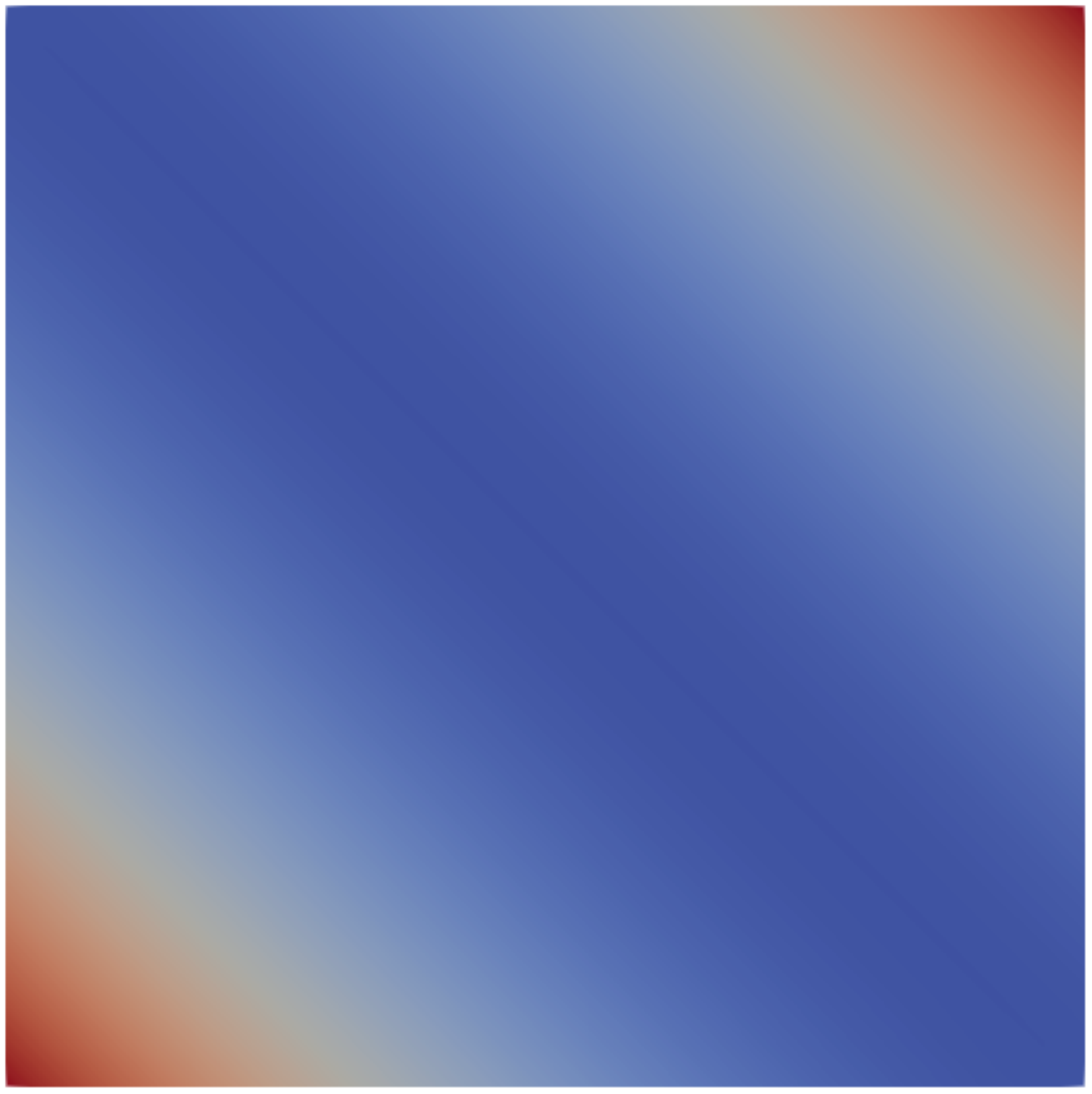}
		\caption{\ac{wagL2}}
	\end{subfigure}
	\begin{subfigure}[t]{0.28\textwidth}
		\centering
		\includegraphics[width=0.95\textwidth]{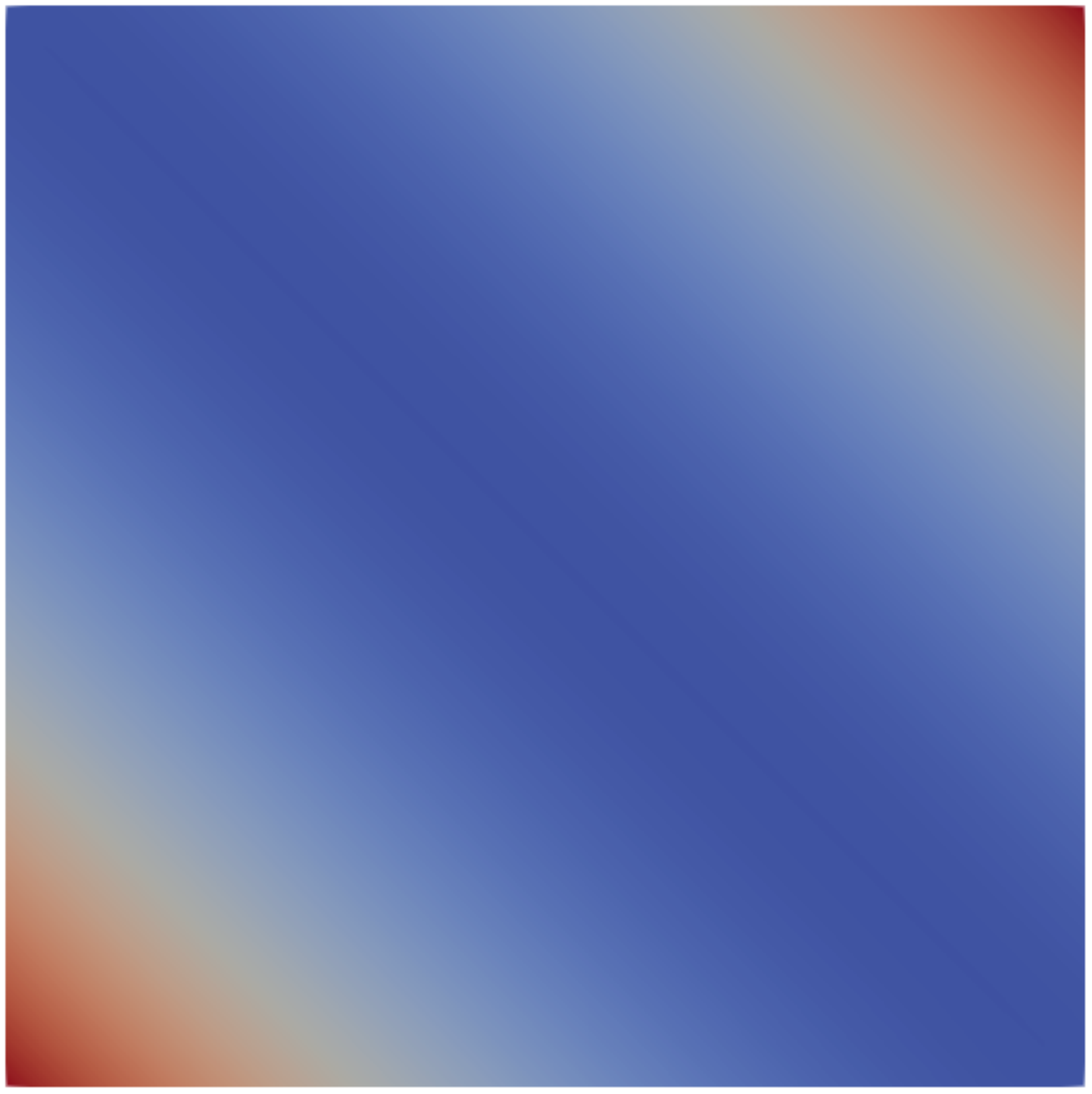}
		\caption{\ac{wagH1}}
	\end{subfigure}
	\begin{subfigure}[t]{0.28\textwidth}
		\centering
		\includegraphics[width=0.95\textwidth]{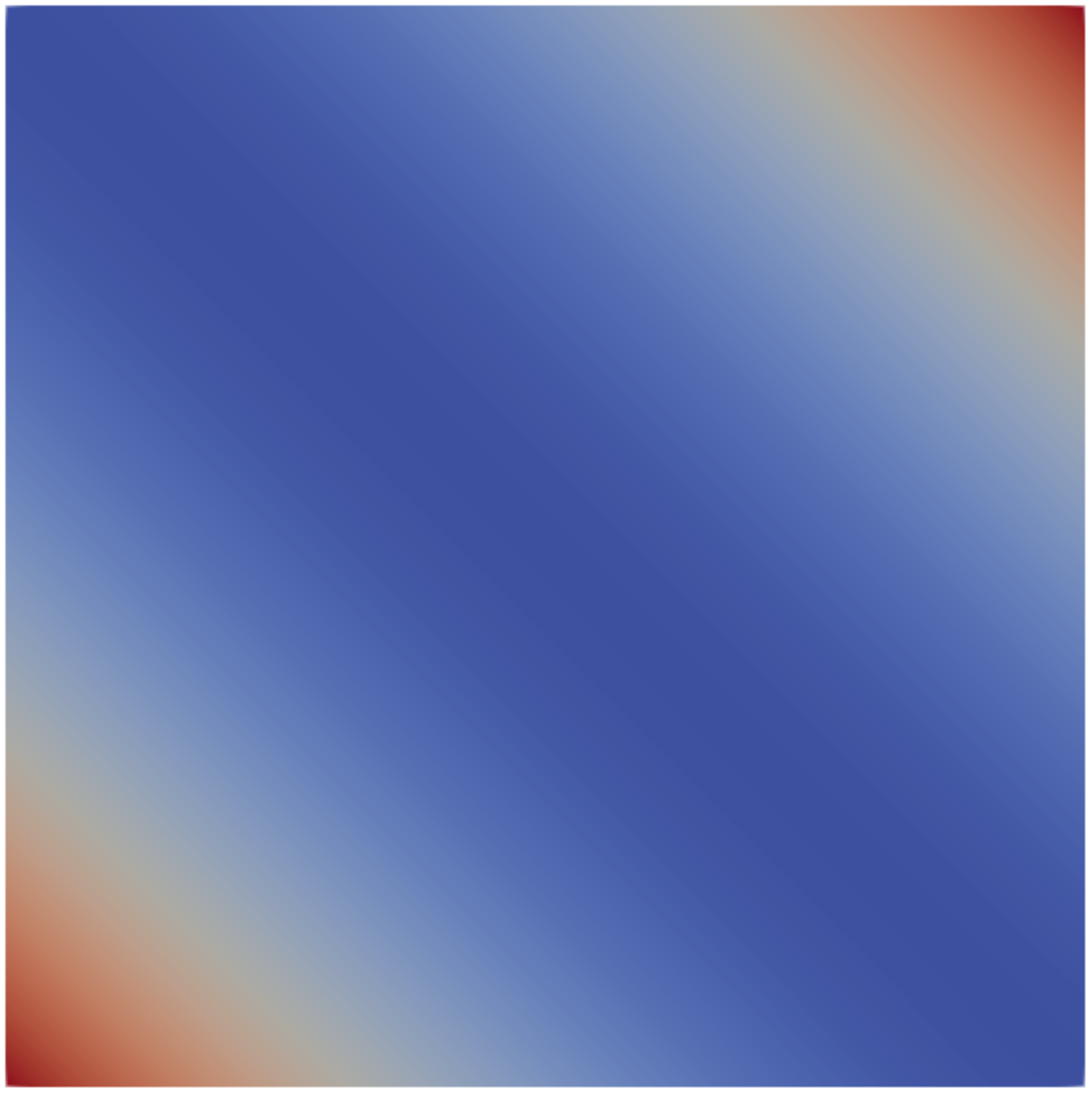}
    	\caption{\ac{sag}}
	\end{subfigure} % \\
	\caption[]{Solution of the Poisson problem with Nitsche's method on the square, using linear elements in a $20 \times 20$ structured triangular mesh. All \emph{weak} methods take $\gamma = 10000$.}
	\label{fig:stiffness}
\end{figure}

\subsubsection{3D results}\label{sub:3d-results}
We show in Figure~\ref{fig:k-l2e-h1e-vs-gamma-cube-k1-dir-poisson-cube-tilted-sphere} the same kind of plots for 3D geometries. In this test, we consider three different geometries, in order to evaluate whether the geometry affects the behavior of the different methods. We have considered the cube, a tilted cube and a sphere. We consider linear elements, only. We observe essentially the same behavior as in 2D. The results for all methods are quite insensitive to the geometry. The sphere is an \emph{easier} geometry in general, since it is smoother and does not involve corners. However, the results are very similar to the ones for the cube and tilted cube. The only difference would be the slightly better results of weak methods for $\gamma$ below 1. Anyway, the more important observations clearly apply. \ac{fgp}, \ac{agp} and \ac{bgpi} are very sensitive to $\gamma$ and exhibit \emph{locking}. W-Ag-* methods are very insensitive to $\gamma$. The \ac{sag} method is close to the optimal value in most situations, while parameter-free.     

\begin{figure}[!h]
	\centering
    \addlegend
	
	\vspace*{0.5em}	
	\begin{subfigure}[t]{0.32\textwidth}
\includegraphics[width=1.00\textwidth]{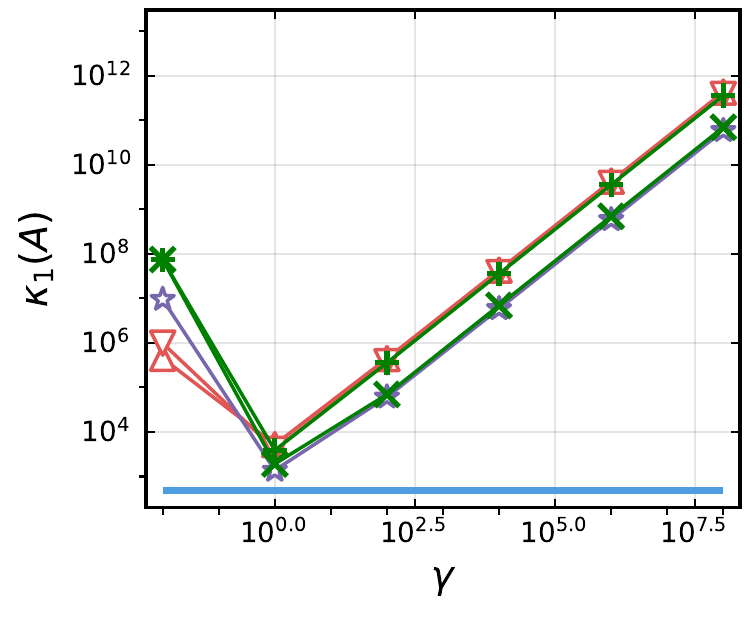}
    \caption{$\kappa_1(A)$ for cube}
	\end{subfigure}
	\begin{subfigure}[t]{0.32\textwidth}
\includegraphics[width=1.00\textwidth]{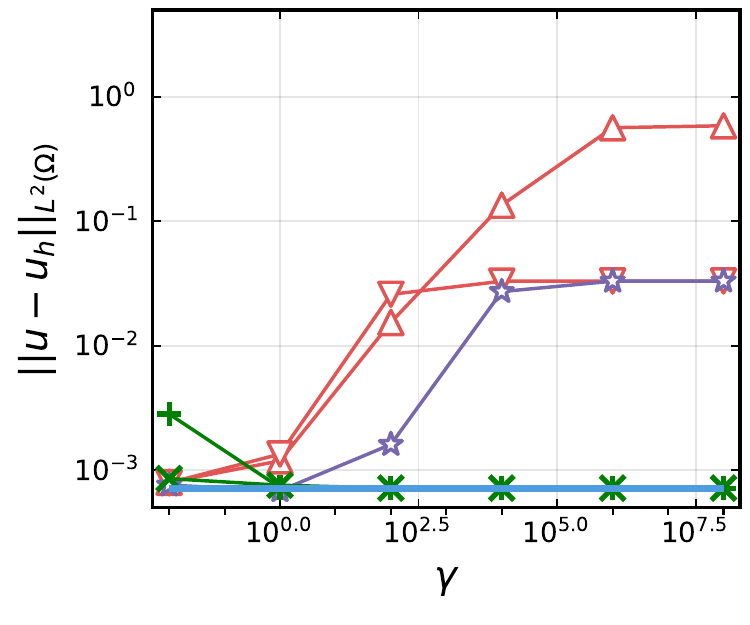}
    \caption{$\|u - u_h\|_{L^2(\Omega)}$ for cube}
	\end{subfigure}
	\begin{subfigure}[t]{0.32\textwidth}
\includegraphics[width=1.00\textwidth]{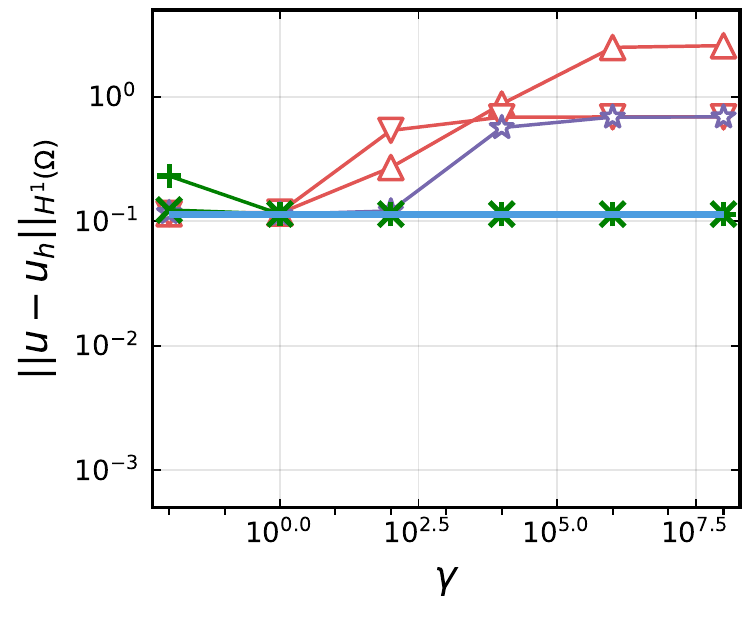}
    \caption{$\|u - u_h\|_{H^1(\Omega)}$ for cube}
	\end{subfigure} \\
\vspace{0.6em}
\begin{subfigure}[t]{0.32\textwidth}
\includegraphics[width=1.00\textwidth]{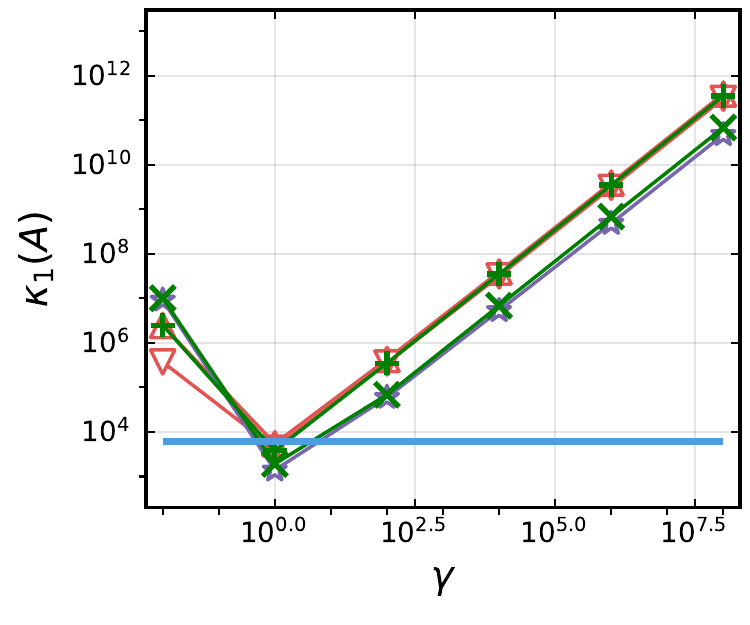}
    \caption{$\kappa_1(A)$ for tilted cube}
	\end{subfigure}
	\begin{subfigure}[t]{0.32\textwidth}
\includegraphics[width=1.00\textwidth]{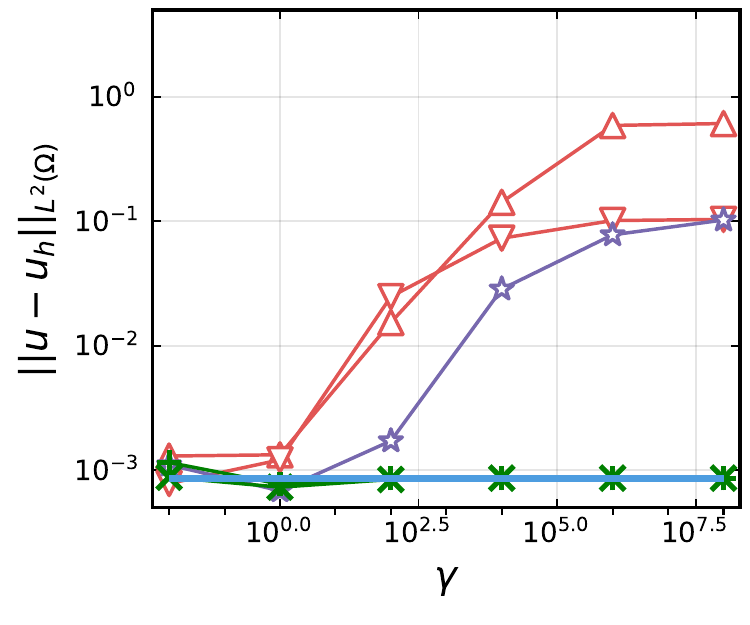}
    \caption{$\|u - u_h\|_{L^2(\Omega)}$ for tilted cube}
	\end{subfigure}
	\begin{subfigure}[t]{0.32\textwidth}
\includegraphics[width=1.00\textwidth]{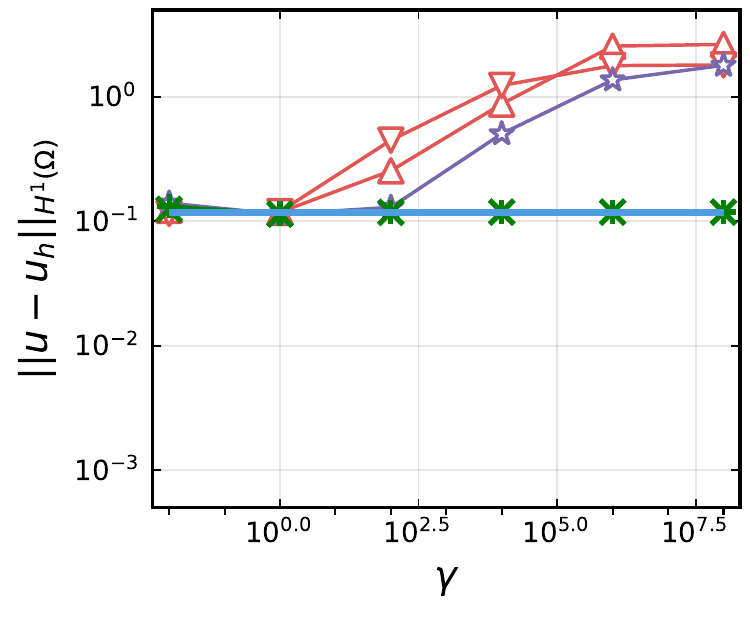}
    \caption{$\|u - u_h\|_{H^1(\Omega)}$ for tilted cube}
	\end{subfigure} \\
\vspace{0.6em}
\begin{subfigure}[t]{0.32\textwidth}
\includegraphics[width=1.00\textwidth]{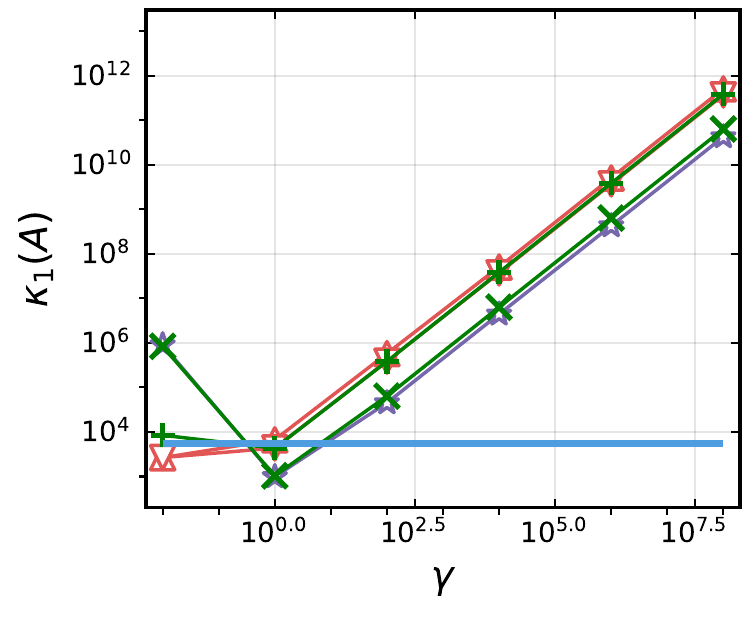}
    \caption{$\kappa_1(A)$ for sphere}
	\end{subfigure}
	\begin{subfigure}[t]{0.32\textwidth}
\includegraphics[width=1.00\textwidth]{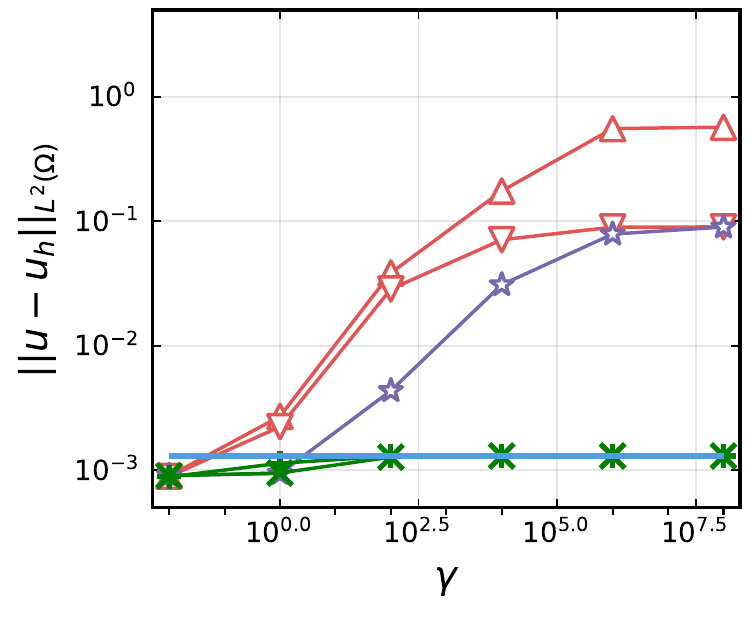}
    \caption{$\|u - u_h\|_{L^2(\Omega)}$ for sphere}
	\end{subfigure}
	\begin{subfigure}[t]{0.32\textwidth}
\includegraphics[width=1.00\textwidth]{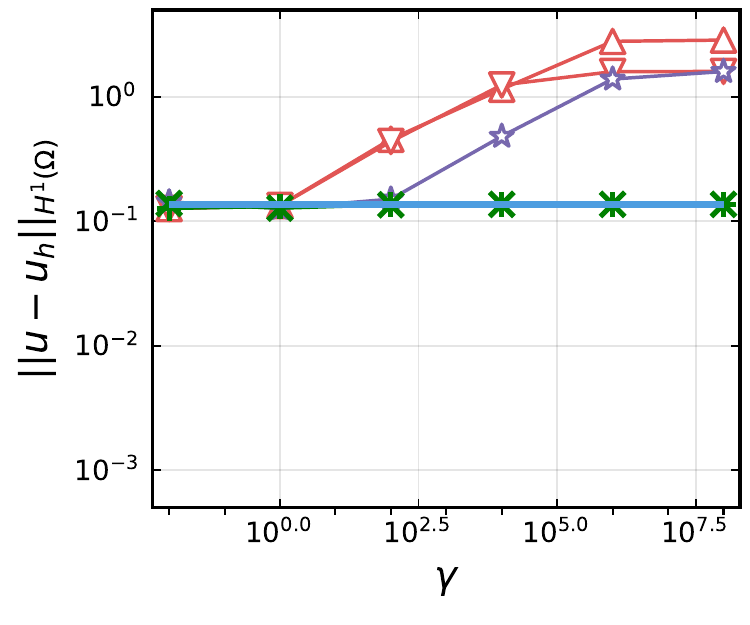}
    \caption{$\|u - u_h\|_{H^1(\Omega)}$ for sphere}
	\end{subfigure}
	\caption[]{Condition number $\kappa_1(A)$, and error norms $\|u - u_h\|_{L^2(\Omega)}$ and $\|u - u_h \|_{H^1(\Omega)}$ vs. penalty parameter $\gamma$ for the Poisson problem with Nitsche's method on the cube, tilted cube and sphere, {for order 1 in a $40\times40\times40$ structured tetrahedral mesh}.}
	\label{fig:k-l2e-h1e-vs-gamma-cube-k1-dir-poisson-cube-tilted-sphere}
\end{figure}

We compare now the linear elasticity problem and Neumann boundary conditions and consider both linear and quadratic elements on the 3D cube. In this case, W-Ag-* errors are totally independent of $\gamma$, also for low values of $\gamma$. Condition number plots have the same behavior as above; for the largest values of $\gamma$ the problem is so ill-conditioned that the solution is severely affected by rounding errors, but such a value of $\gamma$ is for presentation purposes and certainly not a value to be used in practise. The W-GP, \ac{agp} and \ac{bgpi} methods show the same problems discussed above. \ac{sag} exhibits optimal condition numbers and errors in all cases. The results are very similar for linear and quadratic \acp{fe}. The only difference is that the condition number of \ac{sag} is about one order of magnitude lower than the weak methods. 

\begin{figure}[!h]
	\centering
	\addlegend
	
	\vspace*{0.5em}	
	\begin{subfigure}[t]{0.32\textwidth}
\includegraphics[width=1.00\textwidth]{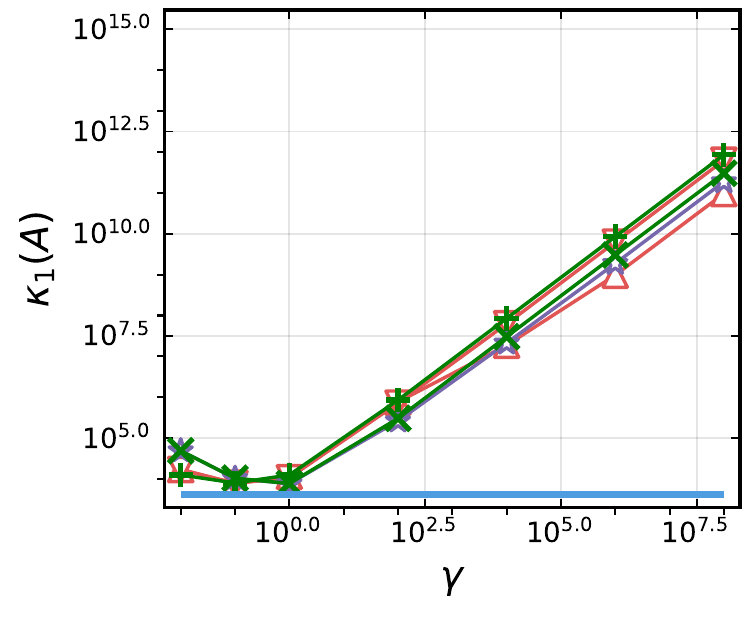}
    \caption{$\kappa_1(A)$ for order 1}
	\end{subfigure}
	\begin{subfigure}[t]{0.32\textwidth}
\includegraphics[width=1.00\textwidth]{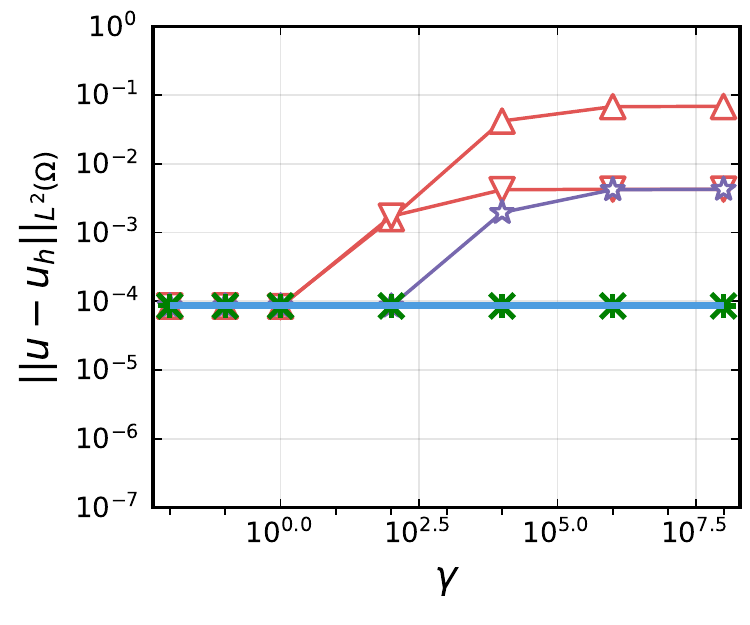}
    \caption{$\|u - u_h\|_{L^2(\Omega)}$ for order 1}
	\end{subfigure}
	\begin{subfigure}[t]{0.32\textwidth}
\includegraphics[width=1.00\textwidth]{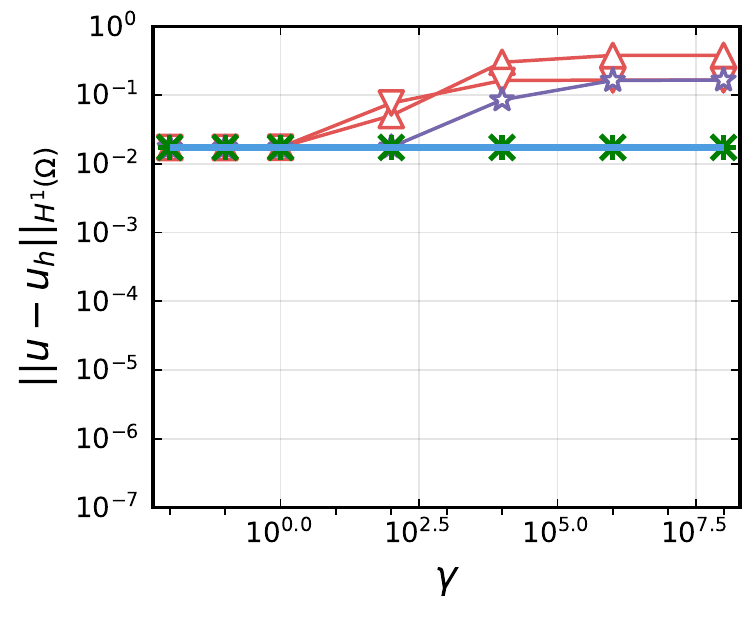}
    \caption{$\|u - u_h\|_{H^1(\Omega)}$ for order 1}
	\end{subfigure} \\
\vspace{0.6em}
\begin{subfigure}[t]{0.32\textwidth}
\includegraphics[width=1.00\textwidth]{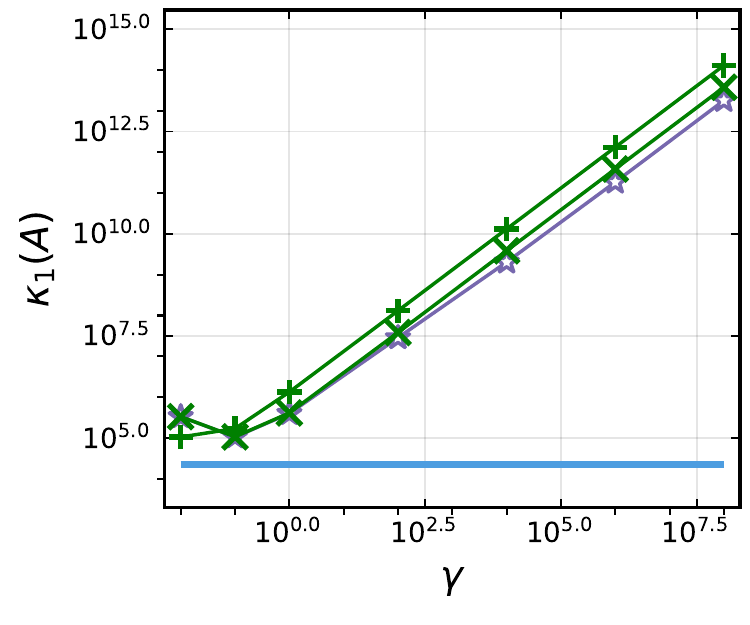}
    \caption{$\kappa_1(A)$ for order 2}
	\end{subfigure}
	\begin{subfigure}[t]{0.32\textwidth}
\includegraphics[width=1.00\textwidth]{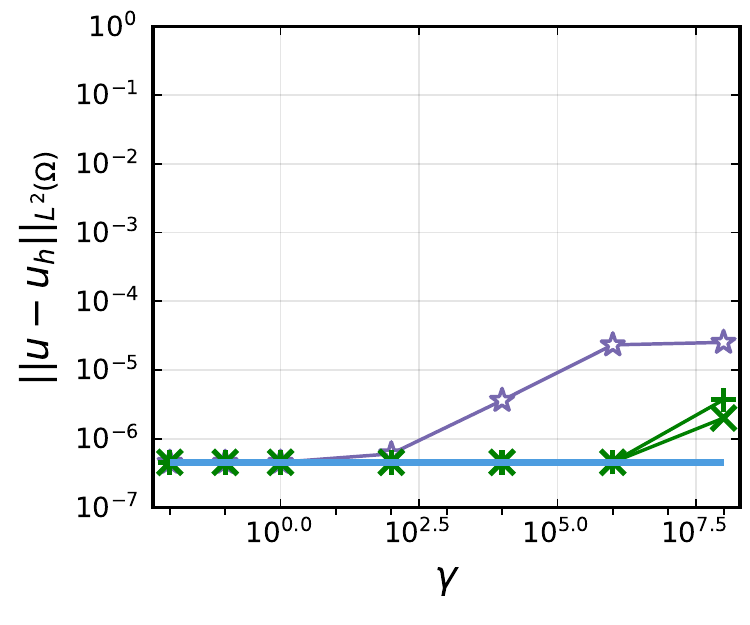}
    \caption{$\|u - u_h\|_{L^2(\Omega)}$ for order 2}
	\end{subfigure}
	\begin{subfigure}[t]{0.32\textwidth}
\includegraphics[width=1.00\textwidth]{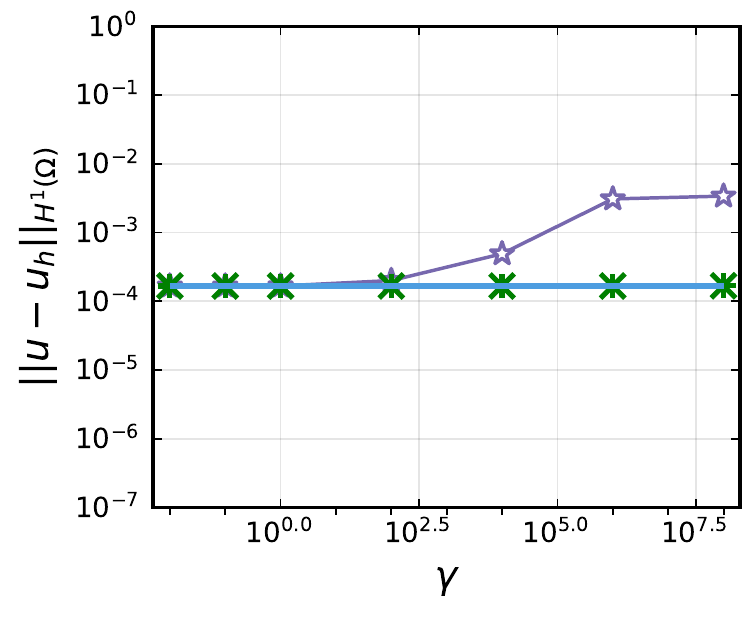}
    \caption{$\|u - u_h\|_{H^1(\Omega)}$ for order 2}
	\end{subfigure}
	\caption[]{Condition number $\kappa_1(A)$, and error norms $\|u - u_h\|_{L^2(\Omega)}$ and $\|u - u_h \|_{H^1(\Omega)}$ vs. penalty parameter $\gamma$ for the linear elasticity problem with Neumann and strong Dirichlet boundary conditions on the cube using linear and quadratic elements {in a $40\times40\times40$ structured tetrahedral mesh}.}
	\label{fig:k-l2e-h1e-vs-gamma-cube-k1-k2-dir-elasticity}
\end{figure}

In Figure~\ref{fig:k-l2e-h1e-vs-gamma-sphere-k1-k2-dir-elasticity}, we analyse the behavior of the methods for linear elasticity and Neumann boundary conditions on the sphere for both linear and quadratic \acp{fe}. Again, W-Ag-* methods turn to be very insensitive to $\gamma$. \ac{fgp}, \ac{agp} and \ac{bgpi} show the same bad behavior indicated in the previous examples. The main difference with respect to above is the fact that the condition number of \ac{sag} for quadratic elements is about one order of magnitude larger than the minimum value obtained with weak schemes. 

\begin{figure}[!h]
	\centering
	\addlegend
	
	\vspace*{0.5em}	
	\begin{subfigure}[t]{0.32\textwidth}
\includegraphics[width=1.00\textwidth]{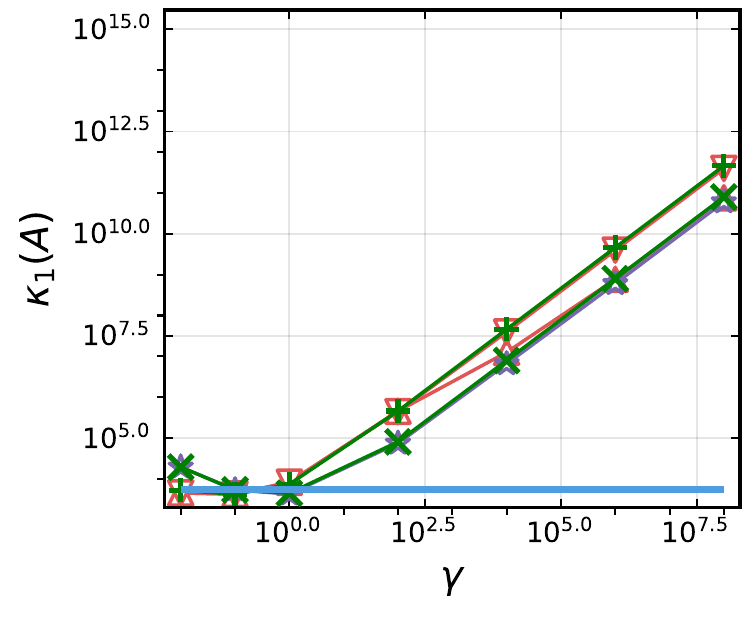}
    \caption{$\kappa_1(A)$ for order 1}
	\end{subfigure}
	\begin{subfigure}[t]{0.32\textwidth}
\includegraphics[width=1.00\textwidth]{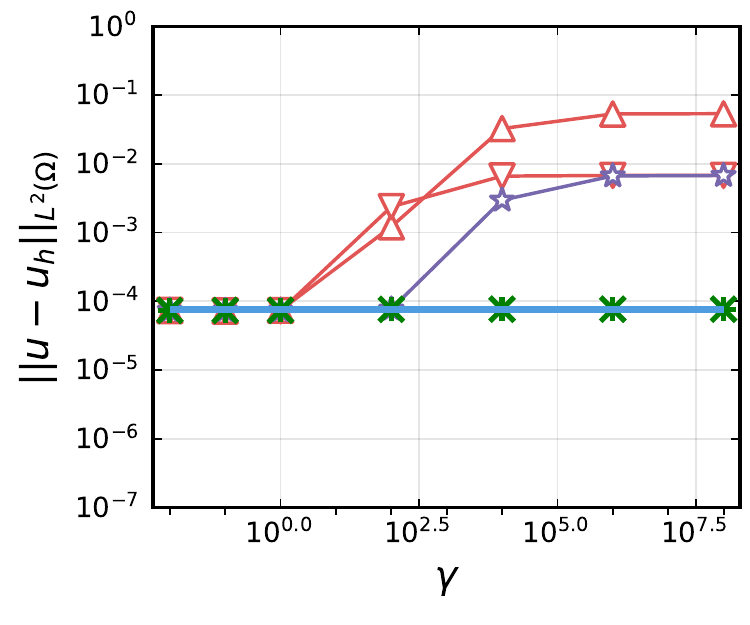}
    \caption{$\|u - u_h\|_{L^2(\Omega)}$ for order 1}
	\end{subfigure}
	\begin{subfigure}[t]{0.32\textwidth}
\includegraphics[width=1.00\textwidth]{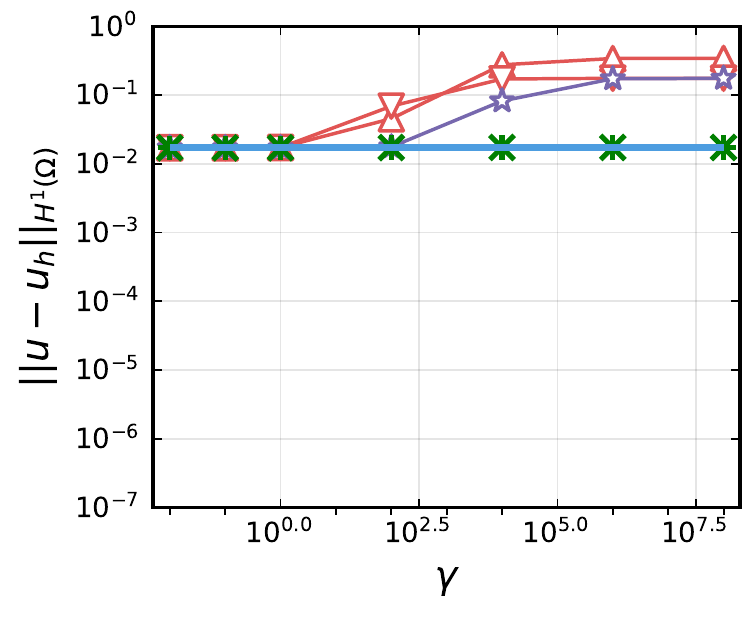}
    \caption{$\|u - u_h\|_{H^1(\Omega)}$ for order 1}
	\end{subfigure} \\
\vspace{0.6em}
\begin{subfigure}[t]{0.32\textwidth}
\includegraphics[width=1.00\textwidth]{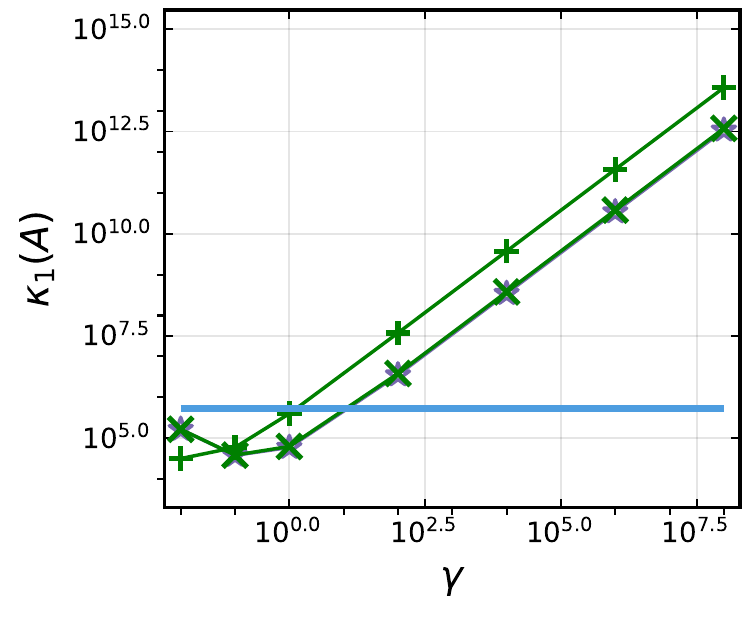}
    \caption{$\kappa_1(A)$ for order 2}
	\end{subfigure}
	\begin{subfigure}[t]{0.32\textwidth}
\includegraphics[width=1.00\textwidth]{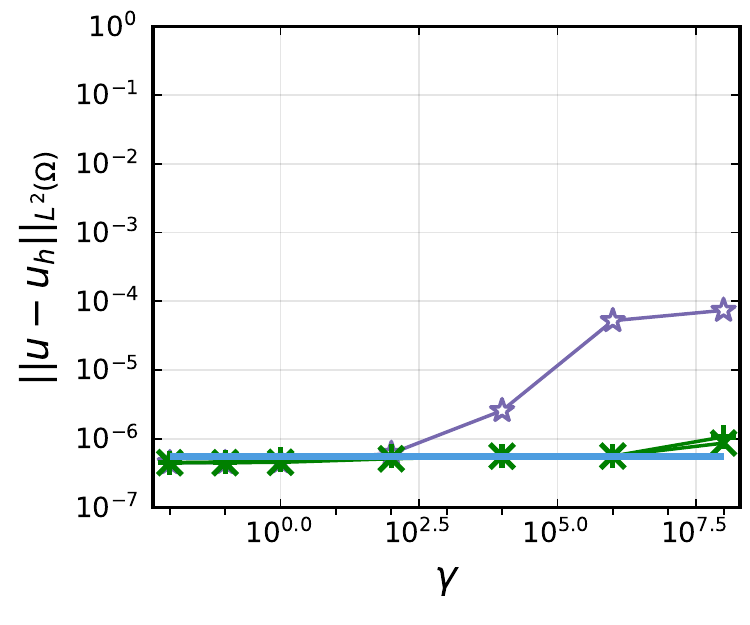}
    \caption{$\|u - u_h\|_{L^2(\Omega)}$ for order 2}
	\end{subfigure}
	\begin{subfigure}[t]{0.32\textwidth}
\includegraphics[width=1.00\textwidth]{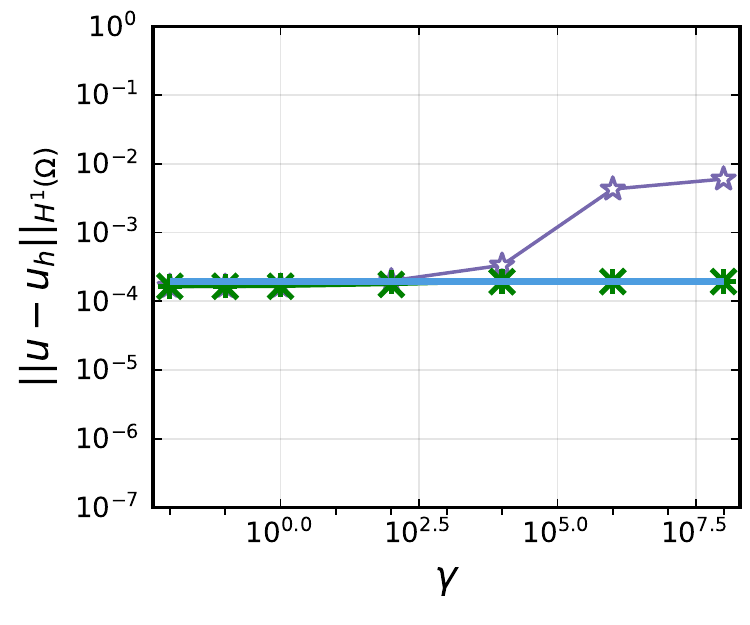}
    \caption{$\|u - u_h\|_{H^1(\Omega)}$ for order 2}
	\end{subfigure}
	\caption[]{Condition number $\kappa_1(A)$, and error norms $\|u - u_h\|_{L^2(\Omega)}$ and $\|u - u_h \|_{H^1(\Omega)}$ vs. penalty parameter $\gamma$ for the linear elasticity problem with with Neumann and strong Dirichlet boundary conditions on the sphere using linear and quadratic elements {in a $40\times40\times40$ structured tetrahedral mesh}.}
	\label{fig:k-l2e-h1e-vs-gamma-sphere-k1-k2-dir-elasticity}
\end{figure}

\subsection{Results under refinement} % (fold)
\label{sub:Results under refinement}

In this section, we analyse results in a more standard way. We consider the variation of the condition number and $L^2$ and $H^1$ errors as we refine the mesh. We show the results for three different values, $\gamma \in \left\{ 1, 100, 10^{8} \right\}$. In Figure~\ref{fig:k-l2e-h1e-vs-h-cube-k1-dir-poisson}, we consider the Poisson problem with Nitsche's method for linear elements on the cube. For the condition number bounds, we observe that the weak methods are in some cases still in a pre-asymptotic regime and in all cases above \ac{sag}. The loss of convergence of \ac{fgp} with increasing values of $\gamma$ is very clear. As indicated above, the results for \ac{agp} and \ac{bgpi} are slightly better, but they still show the loss of convergence in this limit. W-Ag-* and \ac{sag} practically show the same convergence behaviour in all cases. We show in Figure~\ref{fig:k-l2e-h1e-vs-h-8-ball-k2-neu-elasticity} the same plots for elasticity, Neumann boundary conditions and the 8-norm sphere using quadratic elements. Despite all the differences in the numerical setup, the results are strikingly similar and the same conclusions apply.

\begin{figure}[!h]
	\centering
	\addlegendref
	
	\vspace*{0.5em}	
	\begin{subfigure}[t]{0.32\textwidth}
\includegraphics[width=1.0\textwidth]{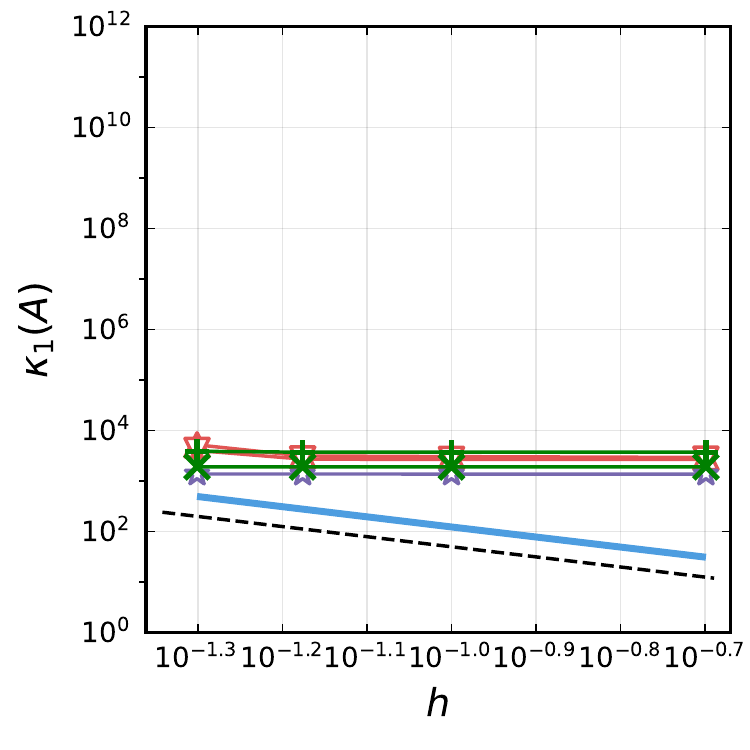}
		\caption{$\kappa_1(A)$ for $\gamma = 1$}
	\end{subfigure}
	\begin{subfigure}[t]{0.32\textwidth}
\includegraphics[width=1.0\textwidth]{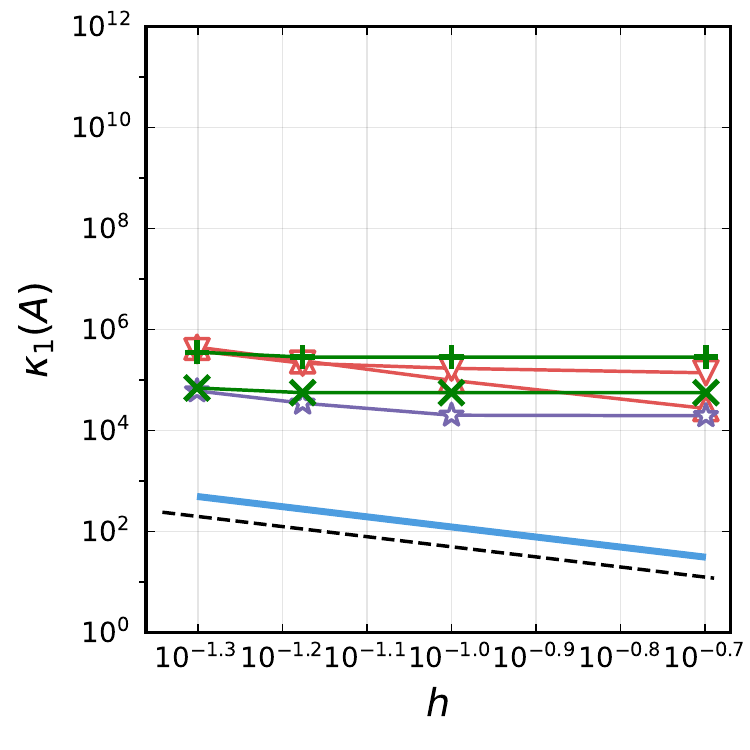}
		\caption{$\kappa_1(A)$ for $\gamma = 100$}
	\end{subfigure}
	\begin{subfigure}[t]{0.32\textwidth}
\includegraphics[width=1.0\textwidth]{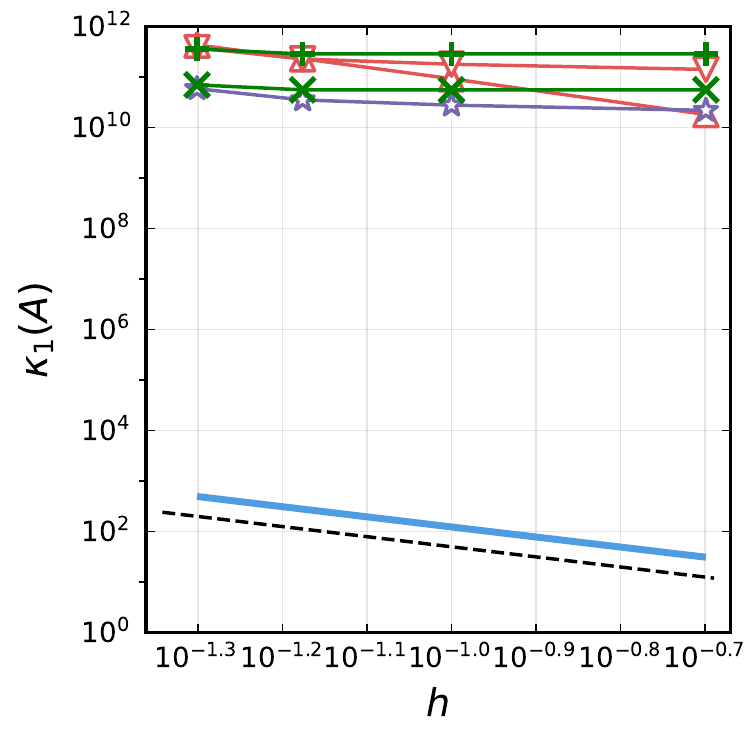}
		\caption{$\kappa_1(A)$ for $\gamma = 10^8$}
	\end{subfigure} \\
\vspace{0.6em}
\begin{subfigure}[t]{0.32\textwidth}
\includegraphics[width=1.00\textwidth]{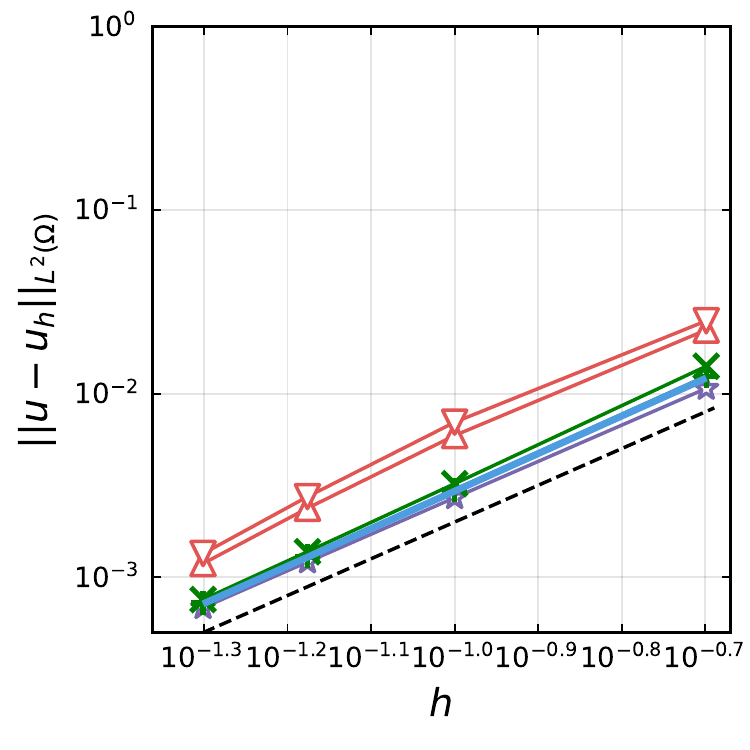}
		\caption{$\|u - u_h\|_{L^2(\Omega)}$ for $\gamma = 1$}
	\end{subfigure}
	\begin{subfigure}[t]{0.32\textwidth}
\includegraphics[width=1.00\textwidth]{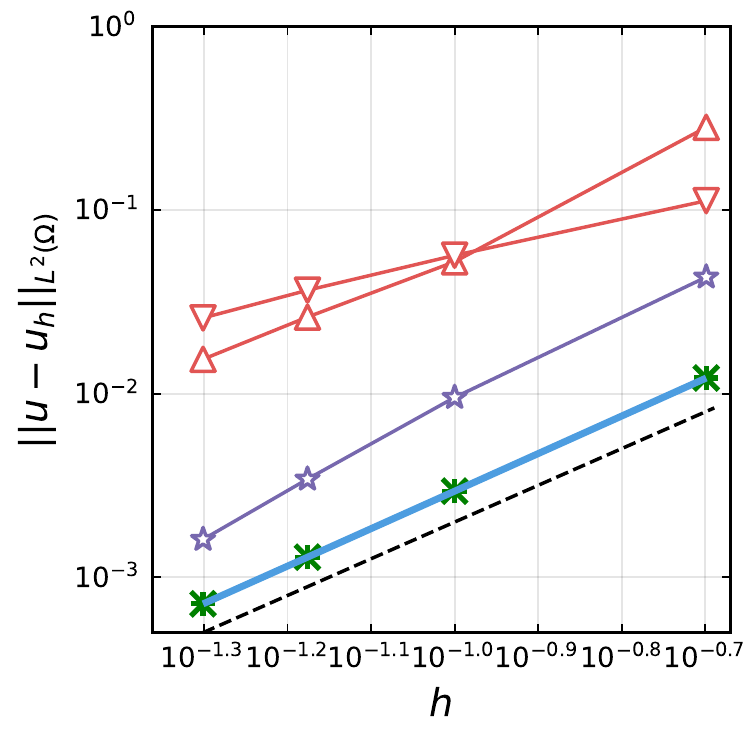}
		\caption{$\|u - u_h\|_{L^2(\Omega)}$ for $\gamma = 100$}
	\end{subfigure}
	\begin{subfigure}[t]{0.32\textwidth}
\includegraphics[width=1.00\textwidth]{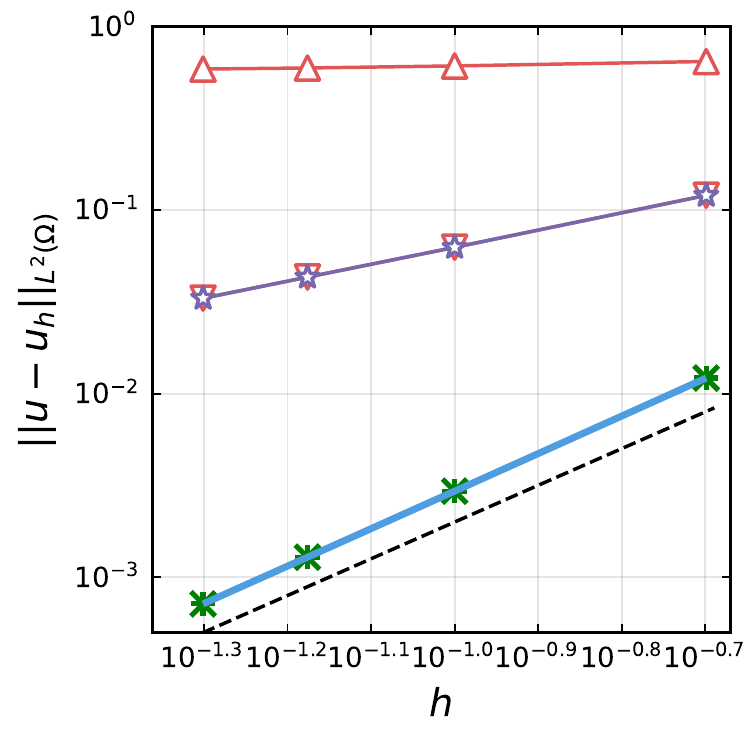}
		\caption{$\|u - u_h\|_{L^2(\Omega)}$ for $\gamma = 10^8$}
	\end{subfigure}\\ 
\vspace{0.6em}
\begin{subfigure}[t]{0.32\textwidth}
\includegraphics[width=1.00\textwidth]{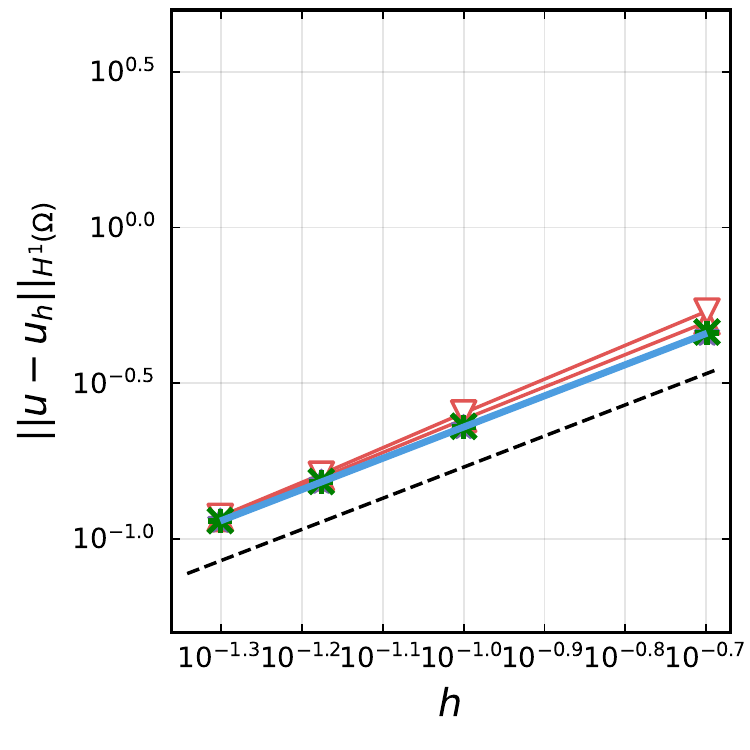}
		\caption{$\|u - u_h\|_{H^1(\Omega)}$ for $\gamma = 1$}
	\end{subfigure}
	\begin{subfigure}[t]{0.32\textwidth}
\includegraphics[width=1.00\textwidth]{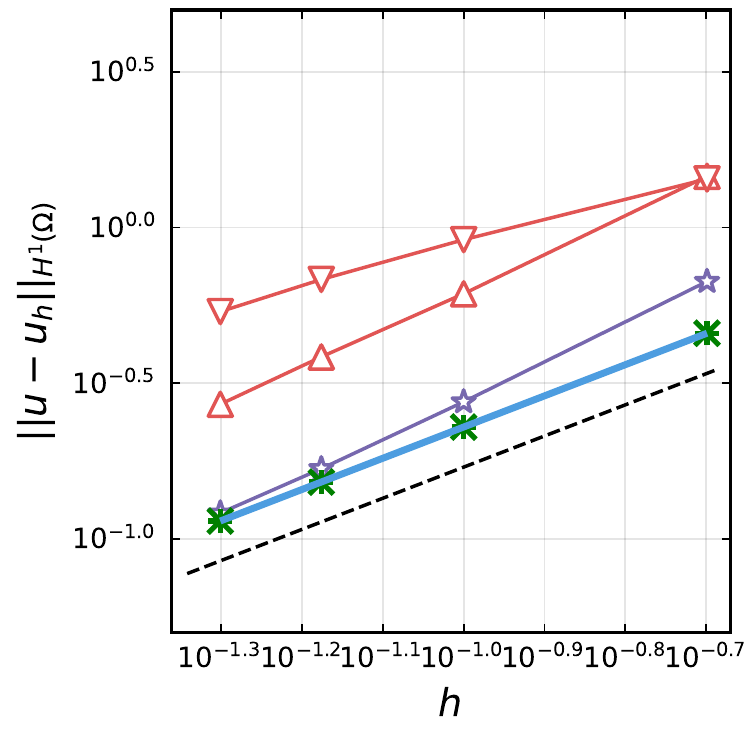}
		\caption{$\|u - u_h\|_{H^1(\Omega)}$ for $\gamma = 100$}
	\end{subfigure}
	\begin{subfigure}[t]{0.32\textwidth}
\includegraphics[width=1.00\textwidth]{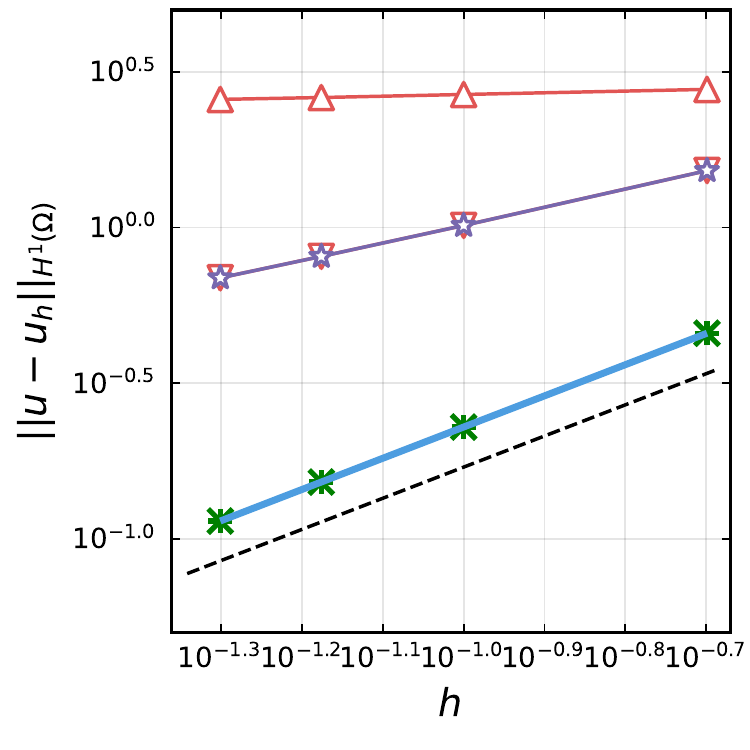}
		\caption{$\|u - u_h\|_{H^1(\Omega)}$ for $\gamma = 10^8$}
	\end{subfigure} 
	\caption[]{Condition number $\kappa_1(A)$, and error norms $\|u - u_h\|_{L^2(\Omega)}$ and $\|u - u_h \|_{H^1(\Omega)}$ vs. mesh size $h$ for the Poisson problem with Nitsche's method on the cube using linear elements. The dashed lines indicate the theoretical bound, i.e., $h^{-2}$ for $\kappa_1(A)$, $h^2$ for $\|u - u_h\|_{L^2(\Omega)}$ and $h$ for $\|u - u_h\|_{H^1(\Omega)}$.}
	\label{fig:k-l2e-h1e-vs-h-cube-k1-dir-poisson}
\end{figure}

\begin{figure}[!h]
	\centering
    \addlegendref
	
	\vspace*{0.5em}	
	\begin{subfigure}[t]{0.32\textwidth}
\includegraphics[width=1.0\textwidth]{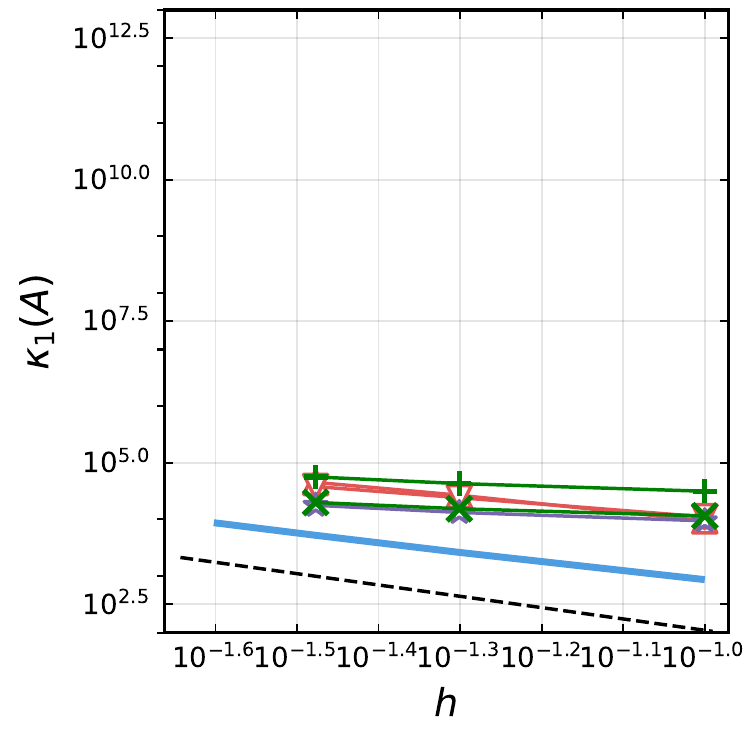}
		\caption{$\kappa_1(A)$ for $\gamma = 1$}
	\end{subfigure}
	\begin{subfigure}[t]{0.32\textwidth}
\includegraphics[width=1.0\textwidth]{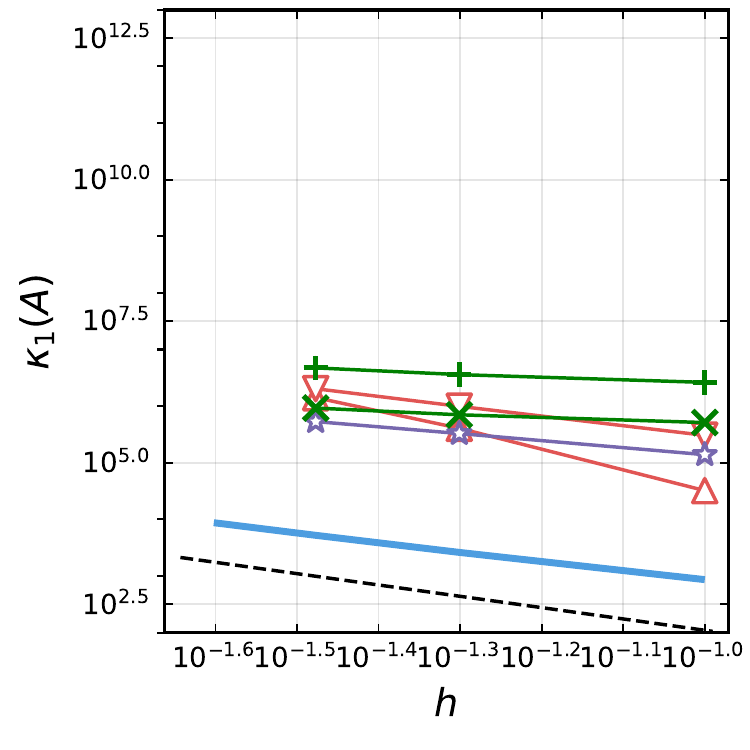}
		\caption{$\kappa_1(A)$ for $\gamma = 100$}
	\end{subfigure}
	\begin{subfigure}[t]{0.32\textwidth}
\includegraphics[width=1.0\textwidth]{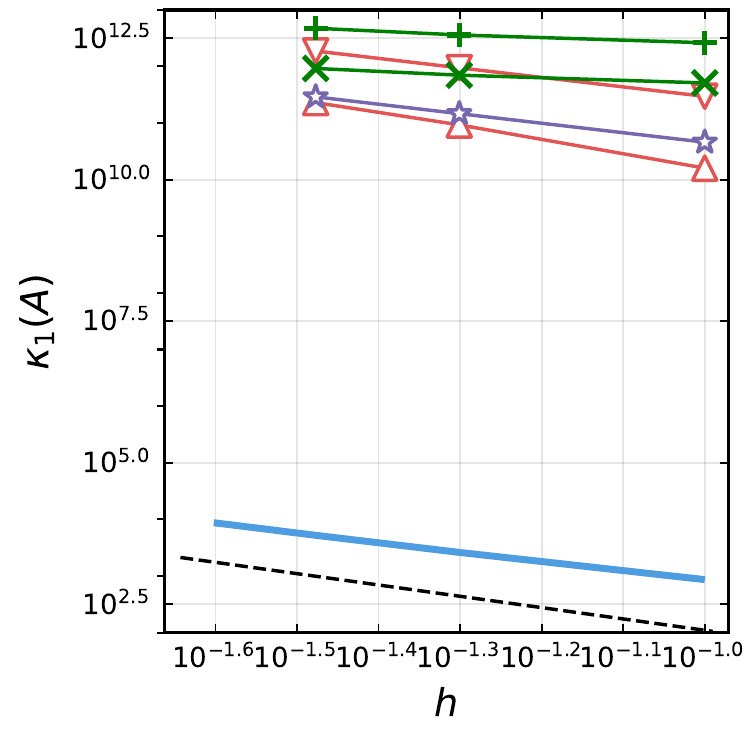}
		\caption{$\kappa_1(A)$ for $\gamma = 10^8$}
	\end{subfigure} \\
\vspace{0.6em}
\begin{subfigure}[t]{0.32\textwidth}
\includegraphics[width=1.00\textwidth]{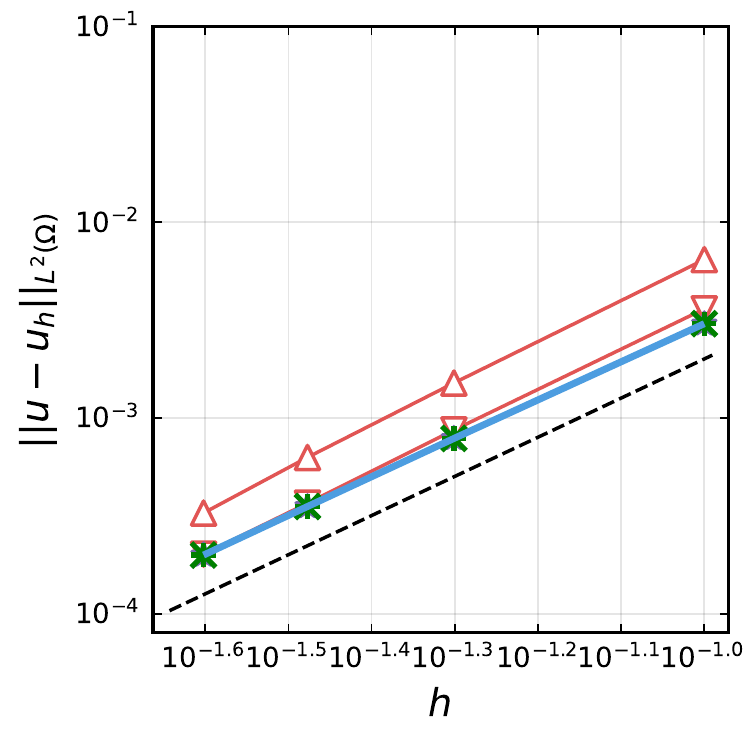}
		\caption{$\|u - u_h\|_{L^2(\Omega)}$ for $\gamma = 1$}
	\end{subfigure}
	\begin{subfigure}[t]{0.32\textwidth}
\includegraphics[width=1.00\textwidth]{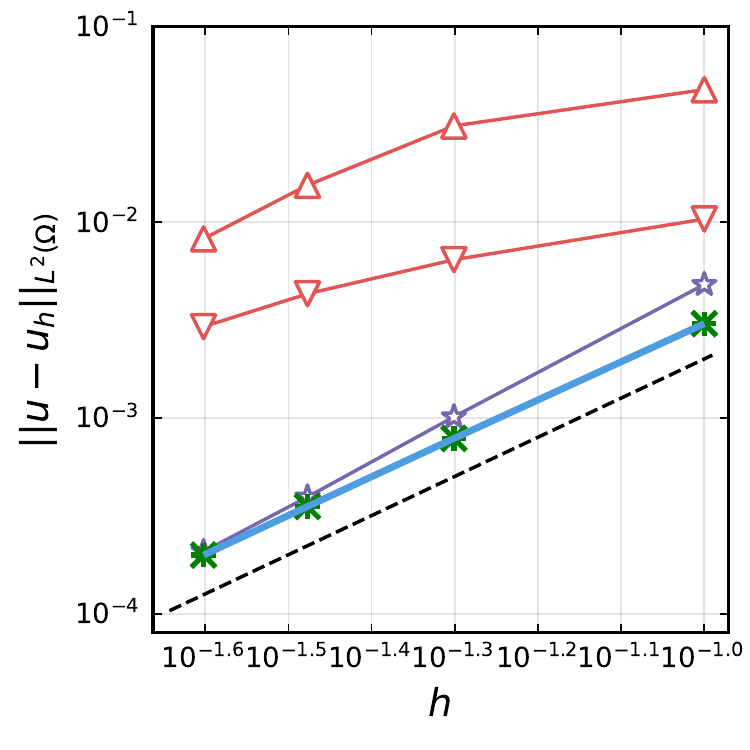}
		\caption{$\|u - u_h\|_{L^2(\Omega)}$ for $\gamma = 100$}
	\end{subfigure}
	\begin{subfigure}[t]{0.32\textwidth}
\includegraphics[width=1.00\textwidth]{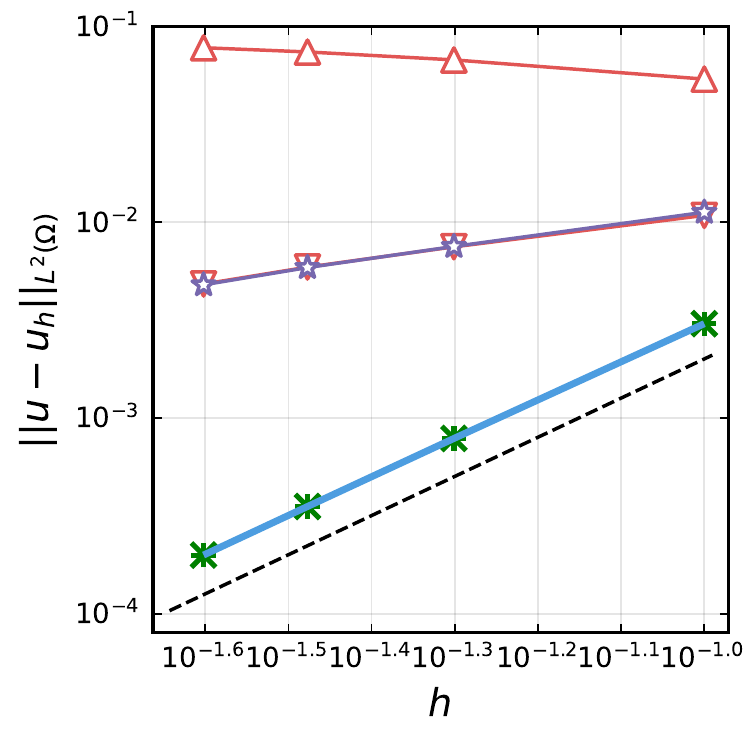}
		\caption{$\|u - u_h\|_{L^2(\Omega)}$ for $\gamma = 10^8$}
	\end{subfigure}\\ 
\vspace{0.6em}
\begin{subfigure}[t]{0.32\textwidth}
\includegraphics[width=1.00\textwidth]{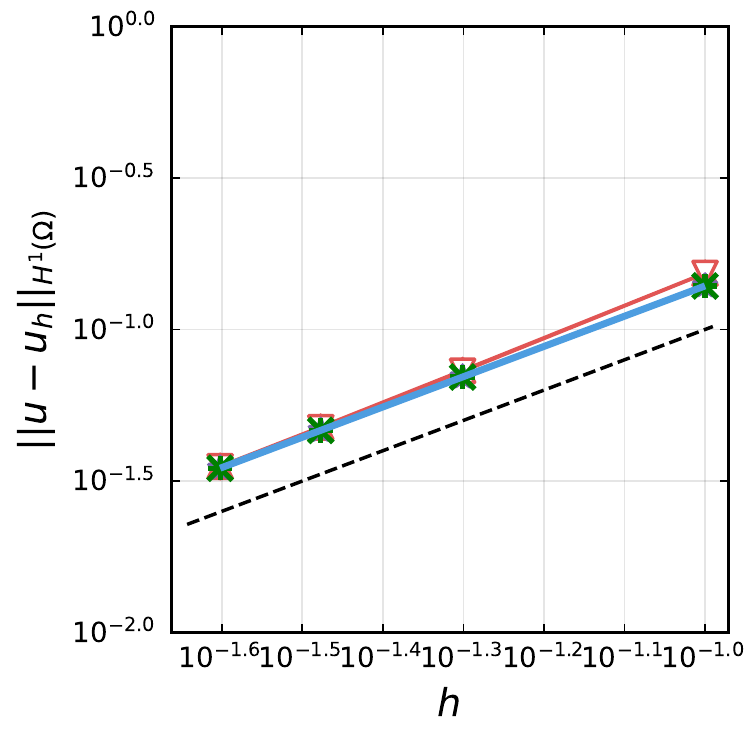}
		\caption{$\|u - u_h\|_{H^1(\Omega)}$ for $\gamma = 1$}
	\end{subfigure}
	\begin{subfigure}[t]{0.32\textwidth}
\includegraphics[width=1.00\textwidth]{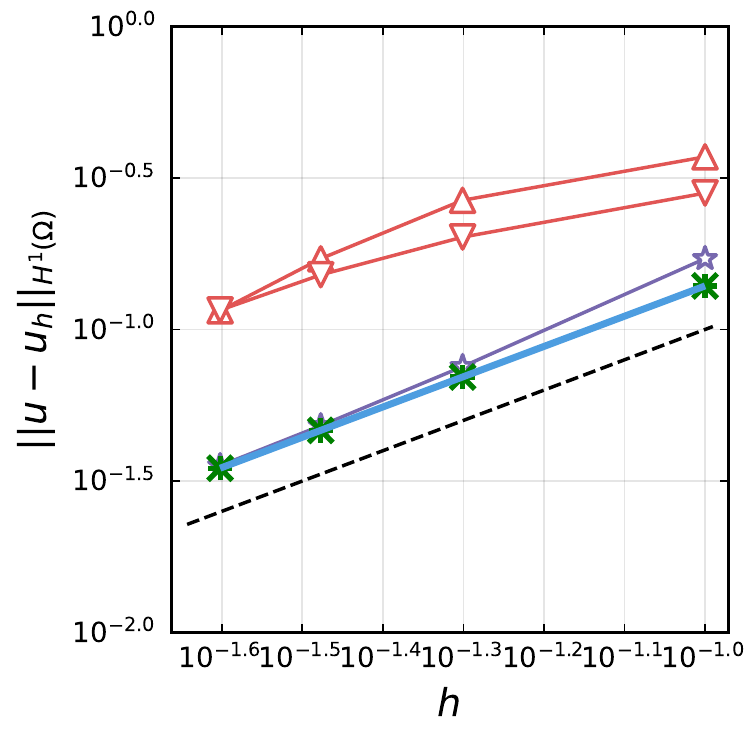}
		\caption{$\|u - u_h\|_{H^1(\Omega)}$ for $\gamma = 100$}
	\end{subfigure}
	\begin{subfigure}[t]{0.32\textwidth}
\includegraphics[width=1.00\textwidth]{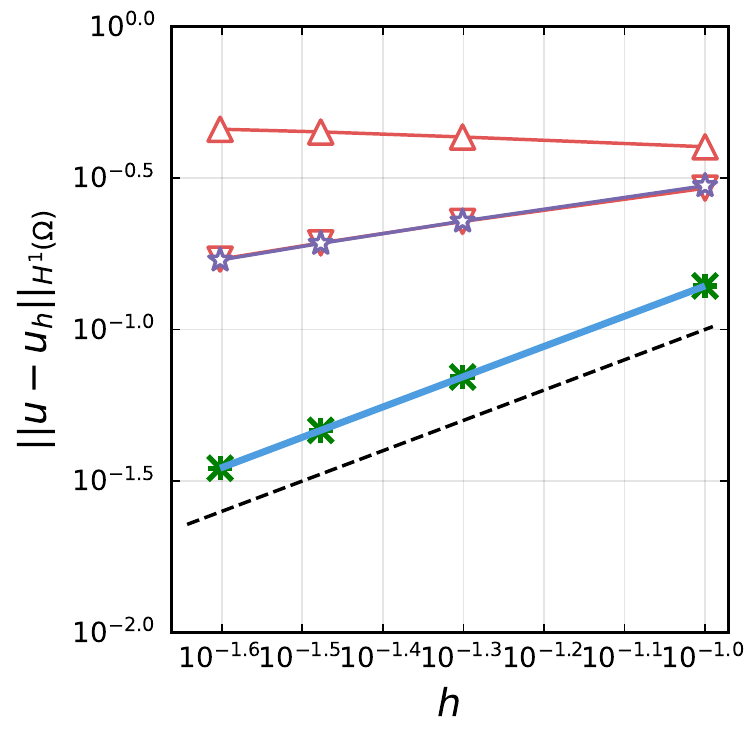}
		\caption{$\|u - u_h\|_{H^1(\Omega)}$ for $\gamma = 10^8$}
	\end{subfigure} 
	\caption[]{Condition number $\kappa_1(A)$, and error norms $\|u - u_h\|_{L^2(\Omega)}$ and $\|u - u_h \|_{H^1(\Omega)}$ vs. mesh size $h$ for the elasticity problem with Neumann boundary conditions on the 8-norm sphere using quadratic elements. The dashed lines indicate the theoretical bound, i.e., $h^{-2}$ for $\kappa_1(A)$, $h^3$ for $\|u - u_h\|_{L^2(\Omega)}$ and $h^2$ for $\|u - u_h\|_{H^1(\Omega)}$.}
	\label{fig:k-l2e-h1e-vs-h-8-ball-k2-neu-elasticity}
\end{figure}

\section{Conclusions}\label{sec:conclusions}

In this work, we have analysed the link between \emph{weak}  ghost penalty methods, which rely on a stabilisation term, and \emph{strong}  \acp{agfem}, which rely on discrete extension operators for the definition of the \ac{fe} spaces. When comparing these two families of methods, we observe that standard \ac{gp} formulations, e.g., the one being used in the CutFEM method, do not lead to acceptable strong versions. The problem is that the kernel of the penalty term does not enjoy the right approximability properties. The stabilisation is rigidising too much the solution, and exhibits a \emph{locking} phenomenon as the penalty parameter increases. As a result, we propose novel \ac{gp} methods that explicitly use the discrete extension operator in \ac{agfem}. The idea is to penalise the distance between the standard \ac{fe} space and the \ac{agfe} space. The penalty term has a local stabilisation structure in which the distance between these two spaces is expressed in terms of the difference between the solution and the discrete extension of its value on interior cells. The kernel of the penalty term is the \ac{agfe} space. Thus, these methods converge to the strong \ac{agfem} as the penalty parameters increases and are \emph{locking}-free.

We have carried out a thorough numerical analysis, in which we have compared all the methods with respect to condition number, $L^2$ and $H^1$ error. We have considered linear and quadratic \acp{fe}, Poisson and linear elasticity problems, weak imposition of Dirichlet conditions using Nitsche's method and Neumann boundary conditions, and a set of geometries in 2D and 3D. Due to the problems commented above, standard \ac{gp} formulations are strongly sensitive to the penalty parameter, losing convergence properties for moderate to large values of this parameter. On the contrary, the new \ac{gp} methods that are weak versions of \ac{agfem} are much less sensitive to the penalty parameter and their convergence is preserved in all cases. They are a good alternative to the strong formulation of \ac{agfem} and are systematically superior to existing \ac{gp} formulations.

\section*{Acknowledgments}

\newcommand{\thethanks}{This research was partially funded by the Australian Government through the Australian Research Council (project number DP210103092), the European Commission under the FET-HPC ExaQUte project (Grant agreement ID: 800898) within the Horizon 2020 Framework Programme and the project RTI2018-096898-B-I00 from the ``FEDER/Ministerio de Ciencia e Innovación – Agencia Estatal de Investigación''. F. Verdugo acknowledges support from the Spanish Ministry of Economy and Competitiveness through the ``Severo Ochoa Programme for Centers of Excellence in R\&D (CEX2018-000797-S)". F. Verdugo acknowledges support from the \emph{Secretaria d'Universitats i Recerca} of the Catalan Government in the framework of the Beatriu Pinós Program (Grant Id.: 2016 BP 00145). This work was also supported by computational resources provided by the Australian Government through NCI under the National Computational Merit Allocation Scheme.}

\thethanks

\setlength{\bibsep}{0.0ex plus 0.00ex}
\bibliographystyle{myabbrvnat}
\bibliography{refs} 
  
\end{document}